\let\oldtocsection=\tocsection
\let\oldtocsubsection=\tocsubsection
\renewcommand{\tocsection}[2]{\hspace{0em}\oldtocsection{#1}{#2}}
\renewcommand{\tocsubsection}[2]{\hspace{1em}\oldtocsubsection{#1}{#2}}
\newtheorem{Thm}{Theorem}[section]
\newtheorem{Lem}[Thm]{Lemma}
\newtheorem{Cor}[Thm]{Corollary}
\newtheorem{Prop}[Thm]{Proposition}
\theoremstyle{remark}
\newtheorem{Rem}[Thm]{Remark}
\theoremstyle{remark}
\newtheorem{example}[Thm]{Example}
\theoremstyle{definition}
\theoremstyle{definition}
\theoremstyle{definition}
\numberwithin{equation}{section}
\newcommand{\A}{\mathbb{ A}}
\newcommand{\R}{\mathbb{ R}}           % Use for real numbers.
\newcommand{\C}{\mathbb{C}}           % Use for complex numbers.
\newcommand{\Z}{\mathbb{ Z}}           % Use for integers.
\newcommand{\QQ}{\mathbb{ Q}}        %Use for rational numbers.
\newcommand{\ad}{\operatorname{ad}}             % Use for adjoint action
\newcommand{\spec}{\operatorname{Spec}}
\newcommand{\Ind}{\operatorname{Ind}}
\renewcommand{\ker}{\operatorname{ker }}
\newcommand{\fb}{{\mathfrak b}}
\newcommand{\fg}{{\mathfrak g}}
\newcommand{\fl}{{\mathfrak l}}
\newcommand{\fp}{{\mathfrak p}}
\newcommand{\ft}{{\mathfrak t}}
\newcommand{\fu}{{\mathfrak u}}
\newcommand{\ga}{\alpha}
\newcommand{\gb}{\beta}
\newcommand{\gre}{\epsilon}
\newcommand{\gl}{\lambda}
 \newcommand{\cb}{\mathcal{B}}
\newcommand{\cf}{\mathcal{F}}
\newcommand{\ch}{\mathcal{H}}
  \newcommand{\ck}{\mathcal{K}}
 \newcommand{\cl}{\mathcal{L}}
 \newcommand{\cm}{\mathcal{M}}
 \newcommand{\cn}{\mathcal{N}}
 \newcommand{\co}{\mathcal{O}}
 \newcommand{\cv}{\mathcal{V}}
\newcommand{\toric}{\mathcal{V}}
\newcommand{\map}{\eta}
\newcommand{\shq}{\underline{\QQ}}
\newcommand{\omunder}{\underline{\omega}}
\newcommand{\nilp}{\nu}
\renewcommand{\tilde}{\widetilde}
\renewcommand{\bar}[1]{\overline{#1}}
\newcounter{myenumi}
\renewcommand{\themyenumi}{$(\arabic{myenumi})$}
\title[The generalized Springer correspondence]{A new approach to the generalized Springer correspondence}
\author{William Graham}
\address{
Department of Mathematics\\ University of Georgia\\ Boyd Graduate Studies Research Center\\ Athens, GA\\ 30602\\ USA
}
\email{wag@math.uga.edu}
\author{Martha Precup}
\address{Department of Mathematics and Statistics\\ Washington University in St. Louis \\ One Brookings Drive\\ St. Louis, Missouri\\ 63130\\ USA  }
\email{martha.precup@wustl.edu}
\author{Amber Russell}
\address{
Department of Mathematics, Statistics, and Actuarial Science,
Butler University, 
4600 Sunset Avenue, Indianapolis, Indiana 46208\\ USA}
\email{acrusse3@butler.edu}
\date{\today}
\begin{document}

\begin{abstract} The Springer resolution of the nilpotent cone is used to give a geometric construction of the irreducible representations of Weyl groups. Borho and MacPherson obtain the Springer correspondence by applying the decomposition theorem to the Springer resolution, establishing an injective map from the set of irreducible Weyl group representations to simple equivariant perverse sheaves on the nilpotent cone.  In this manuscript, we consider a generalization of the Springer resolution using a variety defined by the first author.  Our main result shows that in the type A case, applying the decomposition theorem to this map yields all simple perverse sheaves on the nilpotent cone with multiplicity as predicted by Lusztig's generalized Springer correspondence.  
\end{abstract}

\maketitle

\section{Introduction}

In 1976, Springer introduced a geometric construction of the irreducible representations of the Weyl group
of a semisimple algebraic group $G$ on the cohomology of algebraic varieties called Springer fibers
\cite{Springer1976, Springer1978}.  Springer's work was foundational to the field of geometric representation theory.  There are many current research directions connected to Springer's classical result, 
ranging from exploring modular versions of his work \cite{AHJR1, AHJR2}, to applying geometry, topology, and combinatorics to better understand Springer fibers \cite{FMS15, Wilbert, Kim19}.

The Springer fibers are the fibers of the Springer resolution
\begin{eqnarray*}
\mu: \tilde{\cn} \to \cn
\end{eqnarray*}
which is a resolution of singularities of the nilpotent cone $\cn$ of  $\fg = \mbox{Lie }G$.  In this manuscript, we consider an extension of the Springer resolution, which is a map
\begin{eqnarray*}
\psi = \tilde{\map}\circ \mu: \tilde{\cm} \xrightarrow{\;\tilde{\map}\;} \tilde{\cn} \xrightarrow{\;\mu\;} \cn,
\end{eqnarray*}
from a variety $\tilde{\cm}$ to the nilpotent cone that factors through the Springer resolution.  We refer to this map as the 
extended Springer resolution.  The variety $\tilde{\cm}$ was defined by the first author in \cite{Gra:19}, and used
to construct an analogue of the Springer resolution for $\cm = \spec R(\tilde{\co}^{pr})$.  Here $R(\tilde{\co}^{pr})$ denotes the ring of regular functions on $\tilde{\co}^{pr}$, the simply connected cover of the principal nilpotent orbit $\co^{pr} \subset \cn$.
Although $\tilde{\cm}$ is not smooth, it is locally a quotient of a smooth variety by a finite group. The variety $\tilde{\cm}$ and the map $\psi$ are defined precisely in Section~\ref{section: generalized resolution} below.

When viewed through the lens of the derived category of $G$-equivariant perverse sheaves, the Springer correspondence is an {injective map} from the set of irreducible representations of the Weyl group to the set of simple $G$-equivariant perverse sheaves on the nilpotent cone.   The latter set is indexed by the pairs $(\co, \cl)$, where $\co$ is a nilpotent orbit and $\cl$ is an irreducible $G$-equivariant local system on $\co$.  Since the Springer correspondence is an injection, we recover {only some} of the pairs $(\co, \cl)$ in this manner. Those missing from the Springer correspondence were described by Lusztig. He established a bijection between irreducible representations of what he called relative Weyl groups and all simple $G$-equivariant perverse sheaves on $\cn$ \cite{Lu84}, proving that the Springer correspondence is a special case of what is now known as the generalized Springer correspondence.  Historically, this result was the beginning of Lusztig's consideration of character sheaves, a key tool in the study of irreducible representations of finite groups of Lie type. A summary of this work can be found in~\cite{carter2006}.

Unlike the proof of the Springer correspondence given by Borho and MacPherson \cite{Borho1981}, Lusztig's 
proof of the generalized Springer correspondence does not proceed by applying the decomposition theorem to the pushforward of the constant sheaf on a single variety.  
Our main result, stated precisely in Theorem~\ref{thm.genspringerA} below, 
applies this strategy in the case of $G=SL_n(\C)$ to the extended Springer resolution $\tilde{\cm} \to \cn$.  
This theorem shows that the pushforward of the constant sheaf on
$\tilde{\cm}$ to the nilpotent cone $\cn$ is the direct sum of all the so-called {Lusztig sheaves} used in the generalized
Springer correspondence.  This then implies that the pushforward is the direct sum of all the $G$-equivariant perverse sheaves on $\cn$, each occurring with multiplicity as determined by the generalized Springer correspondence. The fact that all $G$-equivariant perverse sheaves appear here was obtained by the third author in \cite{Russell}, using different methods and without the multiplicity result.  
 
We believe that the geometry of the extended Springer resolution will yield further insights into both Springer fibers and the generalized Springer correspondence.  
In forthcoming work, the authors will characterize irreducible components of the fibers of 
the map $\psi:\tilde{\cm} \to \cn$, and use the geometric information to analyze the generalized Springer correspondence from this new perspective.  The generalized Springer correspondence
is related to relative Weyl groups, each of which is a smaller symmetric group, and acts on the cohomology of $\psi^{-1}(x)$ for certain $x\in \cn$.  Since the map $\psi$ factors through the Springer resolution, our construction yields a new connection between the representation theory of these smaller symmetric groups and the geometry of Springer fibers.  This connection sheds
light on the Springer fibers themselves and yields previously unknown geometric features
of these fibers.

The arguments below rely heavily on the structure theory of the derived category of constructible sheaves.
The proof of our main result is comprised of two main steps.  In broad terms, the first step requires us to analyze the decomposition theorem when applied to a quotient map $X\to X/Z$ where $Z$ is a finite abelian group acting on a variety $X$.  We then apply this general analysis in the special case of $\tilde{\map}: \tilde{\cm} \to \tilde{\cn}$.  The second step of our argument is comprised of studying the simple perverse sheaves that appear as summands of $\tilde{\map}_*\shq_{\tilde{\cm}}[\dim \cn]$. In particular, for $G=SL_n(\C)$ we prove that each simple perverse sheaf that arises in this way pushes forward to a single Lusztig sheaf under the map $\mu$.  
    
The contents of the paper are as follows.
Section \ref{sec:preliminaries} covers background information and definitions. In Section \ref{sec.pushingforward}, we study the pushforward of the constant sheaf $\shq_{\tilde{\cm}}$ along $\tilde{\map}$.  As indicated above, some of our analysis in this section is carried out in a more general setting.  Section~\ref{sec: parabolic ic sheaves} studies the intersection cohomology complexes appearing in $\tilde{\map}_*\shq_{\tilde{\cm}}[\dim \cn]$.  We show that each of these pushes forward to an intersection cohomology complex on the partial Springer resolution $\tilde{\cn}^P$ for a particular parabolic subgroup $P$ of $SL_n(\C)$.  Finally, we realize each of these as an induced complex and prove our main result, Theorem~\ref{thm.genspringerA}, in Section~\ref{s.induction}.  We conclude with an explicit example for $G=SL_6(\C)$, and a discussion of future work.

\textit{Acknowledgments.} The authors are grateful to Pramod Achar for helpful conversations and feedback.  The second author was supported by an AWM-NSF travel grant during the course of this research.

%%%%%%%%%%%%%%%

\section{Preliminaries}\label{sec:preliminaries}

\subsection{Group actions and local systems}\label{sec.local-system-set-up} We work with schemes over the ground field $\C$.  Given a scheme $X$, $\underline{\QQ}_X$
denotes the constant sheaf on $X$ with rational coefficients.  The Lie algebra of an algebraic group is denoted
by the corresponding fraktur letter; if $G$ is an algebraic group, its identity component is denoted $G_0$,
and the component group is $G/G_0$.  

Suppose $Z$ is a finite group acting freely on $X$ on the right, and $V$ is a representation of $Z$.  There is a corresponding local system $\cl_V$ on $X/Z$, defined as the sheaf of locally constant sections of the vector bundle $\cv = X \times^Z V \to X/Z$.  By abuse of terminology, we may refer to $X \times^Z V$ as a
local system on $X/Z$.  

If $G$ is an algebraic group acting on $X$, $G^x$ denotes the stabilizer in $G$ of $x \in X$.  The component group $G^x/G^x_0$ of the
stabilizer is sometimes called the equivariant fundamental group of the orbit $G \cdot x$.   
Suppose $V$ is a representation
of $G^x/G^x_0$, viewed as a representation of $G^x$ via the map $G^x \to G^x/G^x_0$.  Then $V$ corresponds
to a local system $\cl_V$ on $G \cdot x$, defined as the sheaf of locally constant sections of the
vector bundle $G \times^{G^x} V \to G/G^x \cong G \cdot x$.

If $Z$ is a subgroup of $G$, and $M$ is a $Z$-variety, the mixed space $G \times^Z M$ is the quotient of $G \times M$ by the action of $Z$ defined by $(h,m) z = (hz, z^{-1} m)$ for $z\in Z$.  The equivalence class in
$G \times^Z M$ of $(g,m) \in G \times M$ is denoted $[g,m]$, or $[g,m]_Z$ if we wish to make the group explicit.

\subsection{Local systems on the principal orbit}\label{sec.local-system-nilp-orbit}  Throughout the rest of the paper, $G$ denotes a simply connected semisimple algebraic group over $\C$ with Lie algebra $\fg$ and center $Z$.  
The group of characters of $Z$ is denoted by $\widehat{Z}$ (with analogous notation for other finite groups).
Set $T_{ad}=T/Z$.   
We denote the set of nilpotent elements in $\fg$, known as the nilpotent cone, by $\cn$.  The group $G$ has a dense orbit
in $\cn$, called the principal nilpotent orbit. We denote this orbit by $\co^{pr}$ and refer to its elements as principal
nilpotent elements.  

Let $\nilp \in \fg$ be a principal nilpotent element. Choose a standard triple $\{ \nilp, s, \nilp_- \}$ with nilpositive element $\nilp$ and semisimple element $s\in \ft$. Since $\mathrm{ad}_s$ acts semisimply on $\fg$, we can decompose $\fg$ into its $\mathrm{ad}_s$-eigenspaces:
\[
\fg=\bigoplus_{j\in \Z} \fg_j,  \textup{ where } \fg_j =\{x\in \fg \mid [s, x]=jx\}.
\]
For any $k$, we let $\fg_{\ge k} = \bigoplus_{i \ge k} \fg_i$ and  $\fg_{> k} = \bigoplus_{i > k} \fg_i$.
Let $\ft = \fg_0$, $\fu = \fg_{>0}$, and $\fb = \ft + \fu$.  The corresponding subgroups of $G$ are $T$, $U$, and $B = TU$;
$B$ is a Borel subgroup of $G$ with maximal torus $T$.  

Let $\Phi$ denote the root system of $\fg$, with positive
roots $\Phi^+$ chosen so that $\fu$ is the sum of the positive root spaces.  The flag variety of $G$ is $G/B$, and $W=N_G(T)/T$ is the corresponding Weyl group.   
Much of this paper is focused on the case in which $G=SL_n(\C)$ and $\fg=\mathfrak{sl}_n(\C)$.  When $G=SL_n(\C)$ we may assume $B$ is the subgroup of upper triangular matrices; the Weyl group in this case is the symmetric group $S_n$.
We have $G^{\nilp} = Z U^{\nilp}$.  Since $U^{\nilp}$ is connected, being unipotent, the component group
$G^{\nilp}/G^{\nilp}_0$ is identified with $Z$.  Therefore any character $\chi \in \hat{Z}$ corresponds to a local
system $\cl_{\chi}$ on $G \cdot \nilp = \co^{pr}$.

%%%%%%%%%%%%%%%

\subsection{The Springer resolution} We denote the \textbf{Springer resolution} by $\mu: \tilde{\cn} \to \cn$ where
\[
\tilde{\cn} := \{(gB, x) \in G/B \times \cn \mid g^{-1}\cdot x \in \fu \}.
\]
Here $g^{-1}\cdot x$ denotes the adjoint action $Ad(g^{-1})(x) = g^{-1}xg$.  The map $\mu$ is simply projection onto the second factor. Given $x\in \cn$, the fiber
 \[
 \mu^{-1}(x)  =  \{(gB, x)  \mid g^{-1}\cdot x \in \fu\}
 \] 
 is the \textbf{Springer fiber} of $x$.   We can identify the Springer fiber with its image in $G/B$ under the projection to the
 first factor; under this identification, 
  \[
 \mu^{-1}(x)  =  \{gB \mid g^{-1}\cdot x \in \fu\}.
 \] 
 Sometimes $G/B$ is denoted by $\cb$ and the Springer fiber in $G/B$ is denoted $\cb^x$.

\begin{Rem} \label{remark: B-orbit description} There is an isomorphism of varieties
\[
G\times^B \fu \to \tilde{\cn},\; \;[g,x] \mapsto (gB, g\cdot x).
\]
(cf.~\cite[pg. 66]{Jantzen}).
We use this identification frequently below.  Under this identification,
$$
\mu: G\times^B \fu \to \cn, \; \; [g,x] \mapsto g\cdot x.
$$
Viewed as a subset of $G \times^B \fu$, the Springer fiber over $x$ is
\begin{equation} \label{e.SpringerBorbit}
 \mu^{-1}(x) = \{ [g, g^{-1} x] \mid g^{-1}\cdot x \in \fu\}.
\end{equation}
\end{Rem}

%%%%%%%%%%%%%%%

\subsection{The extended Springer resolution} \label{section: generalized resolution}  We now recall from
\cite{Gra:19} the definition of the variety which is the main geometric focus of this paper.   

Let $\Delta\subseteq \Phi^+$ denote the subset of simple roots.  Then $\fg_2=\bigoplus_{\alpha\in \Delta} \fg_{\alpha}$.
Since $[s,\nilp]=2\nilp$, we have $\nilp \in \fg_2$, and we can choose root vectors $E_{\ga}$ for each $\ga \in \Delta$
so that $\nilp = \sum_{\ga\in \Delta} E_{\ga}$.  Since the center $Z$ acts trivially on $\fg$, the action of $T$ on $\fg$ factors through
the map $T \to T_{ad} = T/Z$.   The map $T_{ad} \to T_{\ad} \cdot \nilp$ given by $t \mapsto t \cdot \nilp$ embeds 
$T_{ad}$ in $\fg_2$, so
 $\fg_2$ is an affine toric variety for $T_{ad}$, which we denote by 
\[
\toric_{ad}:=\fg_2 =\bigoplus_{\alpha\in \Delta}\fg_{\alpha}. 
\]
The composition $\toric_{ad} \to \fu \to \fu/[\fu, \fu]$ is an isomorphism, and using this,
we identify $\toric_{ad}$ with $\fu/[\fu,\fu]$.  Via this identification, $\toric_{ad}$
acquires a $B$-action (the subgroup $U$ acts trivially), and the projection
$p: \fu \to \fu/[\fu,\fu]=\toric_{ad}$ is $B$-equivariant.

An affine toric variety is characterized by the character group of the torus,
which can be viewed as a subset of the dual Lie algebra of the torus, together with a cone in the real span of 
the set of characters.  (The character group is a lattice in its real span.)  
The toric variety $\toric_{ad}$ corresponds
to the lattice given by the character group of $T_{ad}$, and the cone equal to the
set of $\R_{\geq 0}$-linear combinations of simple roots.  The simple roots can be viewed
as characters of either $T_{ad}$ or $T$.  We define $\toric$ to be the toric variety for $T$ 
obtained by changing the lattice for $\toric_{ad}$ but keeping the same cone: that is,
$\toric$ corresponds to the the lattice given by the character group of $T$, and the cone equal to the
set of $\R_{\geq 0}$-linear combinations of simple roots.  
It follows that $\toric/Z = \toric_{ad}$ (see \cite[Section 2.2]{Fulton}). 

The composition $T \to B \to B/U$ is an isomorphism,
so identifying $T$ with $B/U$, there is a natural projection $B \to T$.  Via this projection,
$\toric$ acquires a $B$-action (where $U$ acts trivially), and the projection $\pi:\toric
\to \toric_{ad}$ is $B$-equivariant.

The variety $\tilde{\cm}$ and map $\psi: \tilde{\cm} \to \cn$ discussed in the introduction are defined as follows.  First, consider the maps $p: \fu \to \fu/[\fu,\fu]=\toric_{ad}$ and $\pi: \toric \to \toric/Z=\toric_{ad}$ defined in the paragraphs above.  Set
\[
\tilde{\fu} := \toric\times_{\toric_{ad}} \fu =\{ (v, y)\mid \pi(v)=p(y) \}
\] 
i.e., we form the following Cartesian diagram.
\[
\xymatrix{\tilde{\fu} \ar[r] \ar[d] & \fu \ar[d]^p  \\ \toric  \ar[r]^{\pi}& \toric_{ad} }
\]
Because the maps $p$ and $\pi$ are both $B$-equivariant, $B$ acts on $\tilde{\fu}$.  We define $\tilde{\cm}:=G\times^B\tilde{\fu}$.  Let $\map: \tilde{\fu} \to \fu$ denote  projection onto the second factor.  We then define $\tilde{\map}: \tilde{\cm} \to \tilde{\cn}$ to be the map induced from $\map$, so $\tilde{\map}$ maps the element $[g,x]\in \tilde{\cm}$ to $[g, \map(x)]\in \tilde{\cn}$.  The \textbf{extended Springer resolution} is the variety $\tilde{\cm}$, together with the map $\psi: \tilde{\cm}\to \cn$, where $\psi$ is the composition
\[
\xymatrix{ \psi: \tilde{\cm} \ar[r]^{\tilde{\map}} & \tilde{\cn} \ar[r]^{\mu} & \cn} 
\]
of $\tilde{\map}$ with the usual Springer resolution $\mu$.

%%%%%%%%%%%%%%%

\subsection{Intersection cohomology sheaves and the decomposition theorem} \label{sec.decomp}

The decomposition theorem 
%of Beilinson, Bernstein, Deligne, and Gabber \cite{BBD} 
plays an important role in this paper, and we briefly discuss and state it here. The original version of this
theorem is due to Beilinson, Bernstein, Deligne, and Gabber \cite{BBD}; we will also need a generalization
which can be found in  \cite{PCMI} or \cite{deCataldo}.  Statements and discussions of the theorem relevant for this paper can also be found in \cite{DeCataldoMigliorini}, \cite{Borho1983}, and \cite{Jantzen}.  Before we state the decomposition theorem, we first discuss its essential ingredients, namely intersection cohomology complexes.  These are the simple perverse sheaves, and objects in the bounded constructible derived category $D^b(X)$ of a variety $X$.  

A perverse sheaf is a constructible complex of sheaves of $\QQ$-vector spaces which satisfies certain support and co-support conditions.  The intersection cohomology complexes are determined up to canonical isomorphism in $D^b(X)$ by more restrictive conditions.  Suppose $X$ is a complex algebraic variety, and let $U$ be a nonsingular, dense open subvariety of $X$.  Given a local system $\cl$ on $U$, and an integer $d$, we let $\cl[d]$ be the complex in $D^b(X)$ with $\cl$ shifted to the $-d$th location.  The \textbf{intersection cohomology complex} $IC(X,\cl)$ is the object in $D^b(X)$ uniquely determined up to canonical isomorphism by the following properties: the restriction to $U$ is $\mathcal L[\dim X]$,
\[
\dim\{x\in X \mid \mathcal H^i_x(IC(X,\cl))\neq 0\}<-i\text{ for all } i>\dim X
\] 
and 
\[
\dim\{x\in X \mid \mathcal H^i_x(\mathbb D IC(X,\cl))\neq 0\}<-i\text{ for all  } i>-\dim X
\] 
where $\ch^i_x$ denotes the stalk at $x$ of the cohomology sheaf of the complex, and $\mathbb D$ denotes the Verdier dual.  If $X$ is smooth and $\cl_{triv}$ is the trivial local system, $IC(X, \cl_{triv}) =\underline{\mathbb{Q}}_X[\dim X]$ (recall that $\shq_{X}$ denotes the constant sheaf on $X$).  Spaces with the property that $IC(X,\cl_{triv}) = \underline{\mathbb Q}_X[\dim X]$ are called \textbf{rationally smooth}.  Examples of rationally smooth varieties include $\widetilde{\cm}$ (which is rationally smooth since it is locally a quotient of a smooth
variety by a finite group) and $\cn$ (see \cite[\S 2.3]{Borho1983}).

Suppose $j: Z \hookrightarrow X$ is the inclusion of a closed subvariety.  
Since we are only working in the derived setting, we use the notation $j_*$ rather than $Rj_*$ to denote the derived direct image $D^b(Z) \to D^b(X)$.  The functor $j_*$ is fully faithful and induces an equivalence of 
categories between $D^b(Z)$ and the set of objects in $D^b(X)$ supported on $Z$ \cite[Section 2.3]{DeCataldoMigliorini}.  
Moreover, for any $\cf \in D^b(Z)$, 
there is a natural isomorphism $j^* j_* \cf \to \cf$ (cf.~\cite[p.~102]{Iversen}).  
By convention, if we are working with $D^b(X)$, we omit the
symbol $j_*$, and write $IC(Z,\cl)$ for $j_* IC(Z,\cl)$, where $\cl$ is a local system on
an open subset of $Z$.  More generally, if $S$ is a closed subvariety of $Z$ and $\cl$ is a local system on
an open subset of $X$, $IC(S,\cl)$ may denote an element of $D^b(Z)$ or $D^b(X)$.  There
is a natural isomorphism $j^* IC(S,\cl) \to IC(S,\cl)$, where $IC(S,\cl)$
is viewed on the left hand side as an element of $D^b(Z)$ and on the right hand side as
an element of $D^b(X)$.

%Pramod, Prop. 2.4.9
%Pramod's book, Cor. 2.4.12
% (see Iverson, p. 102 for the corresponding statement with sheaves)

All simple perverse sheaves in $D^b(X)$ are of the form $IC(S,\mathcal L)$ for some closed
subvariety $S$ and local system $\mathcal L$.  Note that if $S$ has two different open dense subvarieties $U_1$ and $U_2$, and local systems $\cl_1$ on $U_1$ and $\cl_2$ on $U_2$ that agree on the intersection $U_1\cap U_2$, then $IC(S,\cl_1)$ and  $IC(S,\cl_2)$ are canonically isomorphic.

Let $f:X\rightarrow Y$ be an algebraic map of irreducible complex varieties.  Then $f$ is \textbf{semismall} if for all $d$, 
\[
\dim\{p\in X\mid \dim f^{-1} (p) \geq d\}\leq \dim X - 2d.
\] 
If $f$ is semismall, then $f$ is generically finite.
The map $f$ is \textbf{small} if the inequality above is strict for $d>0$.  A finite map is small.

We now state a version of the decomposition theorem for perverse sheaves. The first statement here can be found in \cite{PCMI} or \cite{deCataldo}.  The second statement comes from \cite{Borho1983} and is a specialization of the original version found in \cite{BBD}.  

\begin{Thm}[The Decomposition Theorem]\label{decomp}
Let $f:X\rightarrow Y$ be a proper map of complex algebraic varieties.  
\begin{enumerate}
\item Let $\mathcal L$ be a semisimple local system on $X$.  Then the derived pushforward $f_*IC(X,\mathcal L)$ is a finite direct sum of shifted simple perverse sheaves on $Y$.  
\item If $f$ is semismall and $X$ is rationally smooth, then $f_*\underline{\mathbb Q}_X[\dim X]$ is perverse, so no nontrivial shifts occur in (1).
That is, 
\begin{eqnarray}\label{eqn.decompthm}
f_*\underline{\mathbb Q}_X[\dim X] \cong \bigoplus_i IC(\overline{S_i},\mathcal{L}_i)\otimes V_i
\end{eqnarray} 
where each $S_i$ is a locally closed subvariety of $Y$, $\mathcal L_i$ is an irreducible local system on an open set of $S_i$ and $V_i$ is a vector space with dimension equal to the multiplicity of  $IC(\overline{S_i},\mathcal{L}_i)$ in the sum. 
\end{enumerate}
\end{Thm}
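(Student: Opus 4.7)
The plan is to follow the weight-theoretic approach of Beilinson-Bernstein-Deligne-Gabber, or equivalently the Hodge-theoretic approach via mixed Hodge modules due to Saito and refined geometrically by de Cataldo-Migliorini. Both approaches reduce the decomposition theorem to three structural facts about \emph{pure} complexes: pushforward preserves purity, pure perverse sheaves are semisimple, and relative Hard Lefschetz holds for projective morphisms.

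First, I would derive part (2) from part (1) together with a perversity statement. Since $X$ is rationally smooth, $\underline{\QQ}_X[\dim X] \cong IC(X, \underline{\QQ}_X)$ is a simple perverse sheaf on $X$, so part (1) applies once we show the pushforward is itself perverse. The semismall inequality $\dim \{ y \in Y \mid \dim f^{-1}(y) \geq d \} \leq \dim X - 2d$ translates by proper base-change into precisely the support and co-support bounds defining a perverse sheaf; concretely, it forces all perverse cohomology sheaves ${}^p\ch^i(f_* \underline{\QQ}_X[\dim X])$ to vanish for $i \neq 0$. Part (1) then yields a shift-free decomposition as claimed, with each simple summand occurring with finite multiplicity $\dim V_i$.

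For part (1), the strategy is to work in a category (mixed Hodge modules, or mixed $\ell$-adic sheaves after spreading $X$, $Y$, $f$, and $\cl$ out over a finite base field) in which $IC(X, \cl)$ is \emph{pure} whenever $\cl$ is semisimple and admits a polarization. The proof then rests on the three pillars above: (a) proper pushforward of a pure complex is pure; (b) every pure perverse sheaf is semisimple, hence decomposes into simple $IC$ sheaves by the classification of simple perverse sheaves; and (c) the relative Hard Lefschetz theorem, producing isomorphisms $L^i: {}^p\ch^{-i}(f_* IC(X,\cl)) \xrightarrow{\sim} {}^p\ch^i(f_* IC(X,\cl))$ for projective $f$ (extended to proper $f$ by Chow's lemma). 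Combining (c) with Deligne's splitting criterion forces the degeneration
\[
f_* IC(X, \cl) \;\cong\; \bigoplus_i {}^p\ch^i\!\bigl(f_* IC(X, \cl)\bigr)[-i],
\]
and applying (b) to each perverse cohomology sheaf completes the decomposition.

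The main obstacle is establishing relative Hard Lefschetz in (c). In the BBD setting this rests on Deligne's Weil~II and a delicate argument that the powers of the Lefschetz operator induce isomorphisms compatible with weight filtrations; in the Hodge-theoretic setting it ultimately reduces to positivity (polarization) properties of the intersection form on the fibers of $f$, proved by de Cataldo-Migliorini via a careful induction on stratifications of the map. Once Hard Lefschetz is in place, identification of the summands as $IC(\overline{S_i}, \cl_i)$ for appropriate locally closed strata $S_i \subset Y$ and irreducible local systems $\cl_i$ is automatic from the classification of simple perverse sheaves.
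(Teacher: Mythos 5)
The paper does not prove this theorem at all: it is quoted from the literature, with part (1) attributed to \cite{PCMI} and \cite{deCataldo} and part (2) to \cite{Borho1983} as a specialization of \cite{BBD}. So there is no internal proof to compare against; your sketch is an outline of exactly the standard arguments contained in those references (purity and preservation of purity under proper pushforward, semisimplicity of pure perverse sheaves, relative Hard Lefschetz plus Deligne's splitting criterion, and the semismall dimension count for part (2)), and at that level it is the right road map rather than a genuinely different route.

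Two points in the sketch deserve sharpening if it were to stand as a proof. First, for part (1) as stated the coefficient local system $\cl$ is an \emph{arbitrary} semisimple local system: such an $\cl$ need not be of geometric origin (so spreading out to a finite field and invoking Weil~II does not literally apply) and need not underlie a polarizable variation of Hodge structure (so Saito's mixed Hodge modules do not literally apply either). The general semisimple case requires the Mochizuki--Sabbah theory of (tame/wild) harmonic bundles and pure twistor $D$-modules; this is precisely the content of the cited article \cite{deCataldo}, and for the local systems actually used in this paper (those attached to characters of a finite group, hence unitary) the polarizable case suffices. Second, in part (2) proper base change together with the semismall inequality gives only the \emph{support} half of perversity: the stalk of $\ch^i(f_*\QQ_X[\dim X])$ at $y$ is $H^{i+\dim X}(f^{-1}(y))$, which vanishes in the forbidden range by the dimension bound on fibers. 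The \emph{cosupport} half does not follow from base change; it follows because $X$ rationally smooth makes $\QQ_X[\dim X]\cong IC(X,\QQ_X)$ Verdier self-dual and $f$ proper makes $f_*$ commute with $\mathbb{D}$, so the pushforward is self-dual and the support condition for it yields the cosupport condition. With those two repairs your outline matches the cited proofs.
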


%%%%%%%%%%%%%%%

\subsection{The Springer Correspondence and Lusztig's Generalized Springer Correspondence}\label{sec.genspringer}

The Springer correspondence is an injective map from the set of isomorphism classes of irreducible $W$-representations to pairs $(\co, \cl)$, where $\co$ is a nilpotent orbit and $\cl$ is an irreducible local system on $\co$.  Note that
each pair $(\co, \cl)$ corresponds to a simple perverse sheaf $IC(\overline{\co},\cl)$ on $\cn$.  In \cite{Borho1983}, Borho and MacPherson give a proof of the Springer correspondence which relies on the theory of perverse sheaves.  They apply Theorem~\ref{decomp} to the Springer resolution to obtain
\begin{equation} \label{e.springercorr}
\mu_*\underline{\QQ}_{\widetilde{\cn}}[\dim \cn] = \bigoplus_{(\co,\cl)} IC(\overline{\co},\cl)\otimes V_{(\co,\cl)}.
\end{equation}
The left side of~\eqref{e.springercorr} is called the \textbf{Springer sheaf}; it is the derived pushforward of the constant sheaf on $\widetilde{\cn}$ along the Springer resolution.  On the right side of~\eqref{e.springercorr}, the sum is over the pairs $(\co, \cl)$ that appear in the Springer correspondence.  One can show that the Weyl group acts on the left side of~\eqref{e.springercorr}.  
This implies that each $V_{(\co,\cl)}$ is a $W$-representation; it is precisely the irreducible representation of $W$ mapping to $(\co,\cl)$ under the Springer correspondence.

Since the Springer correspondence is an injection, we recover only some of the pairs $(\co, \cl)$ in this manner.  Those missing from the Springer correspondence were described by Lusztig in all cases \cite{Lu84}.  Following Lusztig, we consider triples $\textbf{c} = (L,\co_{L}, \mathcal{L}^L)$, where $L$ is a standard
Levi subgroup of $G$, $\co_{L}$ is a nilpotent orbit of $L$, and $\mathcal{L}^L$ is a local
system on $\co_{L}$.  A triple satisfying certain conditions is called a cuspidal datum, and
Lusztig has classified all such triples.  
The \textbf{relative Weyl group} corresponding to a cuspidal datum $\textbf{c}$ is
the group $W(L):=N_G(L)/L$.  Although $W(L)$ is defined even if $\textbf{c}$ is not cuspidal,
the cuspidal hypothesis is required to ensure that $W(L)$ is a Coxeter group. 
Details can be found in \cite{Lu84} and \cite{LuSpal85}.  The \textbf{generalized Springer correspondence} is a bijection between the set of simple perverse sheaves on the nilpotent cone and irreducible representations of relative Weyl groups associated to cuspidal data.

Suppose $P$ is a parabolic subgroup of $G$ with Levi decomposition $P=LU_P$, where $L$ is part of a cuspidal datum $\textbf{c}$ for $G$.  The corresponding decomposition of Lie algebras is $\fp = \fl + \fu_P$.  By means of the isomorphism $\fl \simeq \fp/\fu_P$, the Lie algebra $\fl$ acquires a $P$-action, where $U_P$
acts trivially.  The projection $q: \fl+\fu_P \to \fl$ is $P$-equivariant, and takes $\cn_L + \fu_P$ to $\cn_L$.  Note that
$U_P$ acts trivially on $\cn_L$ when $\cn_L$ is viewed as a subvariety of $\fp/\fu_P$, but $\cn_L$ is not $U_P$-invariant
when $\cn_L$ is viewed as a subvariety of~$\cn_L + \fu_P$.

Let $\tilde{\cn}^P=G\times^P(\cn_L+\fu_P)$ denote the variety considered by Borho and MacPherson in~\cite{Borho1983}; here $\cn_L$ is the nilpotent cone of $L$.  We write $[g,x]_P$ for the equivalence class
of $(g,x)$ in $\tilde{\cn}^P$, where $g \in G$ and $x \in \cn_L+\fu_P$.
Borho and MacPherson call the map $\mu_P: \tilde{\cn}^P \to \cn$ defined by $\mu_P([g,x]_P) = g\cdot x$ a partial resolution of
$\widetilde{\cn}$; it is analogous to the Springer resolution.  Note that using an isomorphism analogous to the one defined in Remark~\ref{remark: B-orbit description},  $\tilde{\cn}^P$ can be identified with the variety of pairs $(gP, x)\in G/P\times \cn$ such that~$g^{-1}\cdot x \in \cn_L +\fu_P$. 

Let $\pi_P: \tilde{\cn}^P \to G\times^P \cn_L$ be the map induced from $q: \fl+\fu_P \to \fl$.
We have the following diagram:
\[\cn\xleftarrow{\;\mu_P\;} \widetilde{\cn}^P:=G\times^P (\cn_L+\fu_P)\xrightarrow{\;\pi_P\;} G\times^P \cn_L{\longrightarrow} \cn_L.\]

By the induction of complexes defined in \cite{Lusztig1985} (see, in particular, \cite[(1.9.3)]{Lusztig1985} and \cite[2.6.3]{BLEquiv}), each $L$-equivariant object in the derived category $D^b(\cn_L)$ determines a $G$-equivariant object in the derived category $D^b(G\times^P \cn_L)$.  We denote the corresponding functor by $\Ind_P^G$. Consider the simple perverse sheaf $\Ind_P^GIC(\overline{\co}_{L},\cl^L)$ on $G\times^P\cn_L$, where $\co_{L}$ and $\cl^L$ are from the cuspidal datum $\textbf{c}=(L,\co_L, \cl^L)$.   We define
\begin{eqnarray}\label{e.lusztigsheaf}
\mathbb A_\textbf{c} := {\mu_P}_*{\pi_P}^*\Ind_P^GIC(\overline{\co}_{L},\cl^L)[\dim\fu_P]
\end{eqnarray}
to be the \textbf{Lusztig sheaf} associated to~$\textbf{c}$.  

The Lusztig sheaf $\mathbb{A}_\textbf{c}$ decomposes as a sum of simple perverse sheaves, each of which corresponds to an irreducible representation of $W(L)$.  The generalized
Springer correspondence is obtained by considering the Lusztig sheaves $\mathbb{A}_\textbf{c}$
as
 $\textbf{c}$ varies over all cuspidal data.  When $L=T$, the only cuspidal datum is $\textbf{c} = (T, \co_{ \{0\} }, \cl_{triv}^T)$ where $\co_{\{0\}}$ is the zero orbit.  In this
case, the relative Weyl group is the full Weyl group $W$ and the Lusztig sheaf
is the Springer sheaf.  We therefore recover the Springer correspondence as a special case of the generalized Springer correspondence.

\begin{Rem} Lusztig's generalized Springer correspondence is proven for connected reductive algebraic groups over any algebraically closed field (of possibly positive characteristic $p$) and $\overline{\mathbb Q}_\ell$-sheaves.  Although the discussion in this paper focuses on the characteristic zero setting and $\mathbb Q$-sheaves, most of the results generalize to the positive characteristic setting.  The only potential difficulty is our analysis of varieties with a finite group action (e.g.~the $Z$-action on $\tilde{\cm}$) in Section~\ref{sec.pushingforward}.  However, if $X$ is a variety defined over an algebraically closed field of characteristic $p$ and $Z$ is a finite abelian group acting on $X$, the results of Section~\ref{sec.pushingforward} hold whenever $p$ is relatively prime to  $|Z|$.  
\end{Rem}

%%%%%%%%%%%%%%%%%%%%%%%%%%%%%%%%%%%%%

\subsection{Cuspidal data and Lusztig sheaves for $G=SL_n(\C)$}\label{sec.typeA}

When $G=SL_n(\C)$, the Springer correspondence and its generalization have a more direct description.  In this setting, the only pairs $(\co,\cl)$ appearing on the right hand side of~\eqref{e.springercorr} are those corresponding to the trivial local system $\cl_{triv}$ on each orbit $\co$, thus the Springer correspondence gives a bijection between irreducible representations of $S_n$ and nilpotent orbits.  We adopt the conventions of~\cite{Jantzen}, so the irreducible $S_n$-representation $V_\lambda$ indexed by the partition $\lambda\vdash n$ corresponds to the nilpotent orbit $\co_\lambda$ of nilpotent matrices with Jordan type $\lambda$.  Note that our conventions may differ from some others appearing in the literature up to tensoring $V_\lambda$ with the sign representation.

Let $\nilp\in \co_\lambda$.  Each irreducible local system on the orbit $\co_\lambda$ corresponds to a unique irreducible representation of the equivariant fundamental group $G^{\nilp}/G_0^{\nilp}$, as described in Section~\ref{sec.local-system-set-up} above (in fact, this correspondence is an equivalence of categories). If $\lambda=(\lambda_1, \lambda_2, \ldots, \lambda_k)$, we have $G^{\nilp}/G_0^{\nilp} \simeq \Z_m$ where $m=\gcd(\lambda_1,\lambda_2, \ldots, \lambda_k)$.  The center $Z$ of $G$ is isomorphic
to the cyclic group $\Z_n$.  Since $Z$ is a subgroup of $G^\nilp$, we obtain a map $Z\to G^\nilp/G^\nilp_0$.  This map is always surjective in the type $A$ setting, so we obtain an injective map $\widehat{G^\nilp/G^\nilp_0} \to
\widehat{Z}$ of the character groups.  Suppose  $\chi \in \widehat{Z}$ is in the image of this map; note
that the order of $\chi$ divides $m$.  We denote the corresponding $G$-equivariant local system on $\co_\lambda$  by 
$\cl_\chi$; it depends on 
the orbit $\co_\lambda$ as well as on $\chi$, but we omit $\lambda$ from the notation.  Recall that if $\lambda=(n)$, then $\co_\lambda=\co^{pr}$ is the principal orbit and $G^{\nilp}/G_0^{\nilp} \simeq Z$.  

From \cite{Lu84} and \cite{LuSpal85}, it follows that there is a character of $Z$ associated to each Lusztig sheaf $\mathbb A_\textbf{c}$.  In the type $A$ setting, this association is a bijection.  Given $\chi\in \widehat{Z}$, we denote
by  $\textbf{c}_\chi$ the unique cuspidal datum associated to $\chi$, and $\mathbb A_\chi$ the Lusztig sheaf constructed from $\textbf{c}_\chi$.  If  $\chi$ has order $d$, then $\textbf{c}_\chi = (L,\co_{L}^{pr}, \mathcal{L}_\chi^L)$, where the Levi subgroup $L$ is of the form $S(GL_d(\mathbb C)\times \cdots \times GL_d(\mathbb C))$ with $n/d$ factors, $\co_{L}^{pr}$ is the principal orbit in $\cn_L$, and $\cl_\chi^L$ is a local system defined precisely in Section~\ref{s.induction} below (see also \cite[\S 5]{LuSpal85}).  The relative Weyl group in this case is isomorphic to $S_{n/d}$.   

Given a character $\chi$ of order $d$, we obtain an induced local system $\cl_\chi$ on precisely those orbits $\co_\lambda$ for which $\lambda=(\lambda_1,\ldots, \lambda_k)$ is a partition of $n$ such that $d$ divides $\lambda_i$ for all $i$.  Let $\bar\lambda$ denote the partition of $n/d$ obtained by dividing each part of $\lambda$ by $d$.  The Lusztig sheaf $\mathbb{A}_\chi$ corresponding to $\chi$ decomposes as:
\begin{equation} \label{e.decompLusztig}
\mathbb{A}_\chi = \bigoplus_{\substack{\lambda=(\lambda_1, \ldots, \lambda_k) \vdash n\\  d\vert \lambda_1, \ldots, d\vert\lambda_k}} IC(\overline{\co}_\lambda, \cl_\chi)\otimes V_{\bar\lambda}.
\end{equation}
The generalized Springer correspondence can be stated explicitly in this type A setting as the bijection between simple perverse sheaves on the nilpotent cone and irreducible representations of the relative Weyl groups given by $IC(\bar{\co}_\lambda, \cl_\chi)\mapsto V_{\bar\lambda}$.  The following example computes the decomposition \eqref{e.decompLusztig} for $G=SL_4(\C)$.

\begin{example}\label{ex: Lusztig sheaves n=4}  The group $G=SL_4(\C)$ has center $Z =\{\omega^k I_4\}$, where $\omega=\exp(\pi i/2)$ and $I_4$ is the $4\times 4$ identity matrix.  Write $\widehat{Z} = \{\iota, \chi_1, \chi_2, \chi_3 \}$, where $\chi_k$ denotes the central character such that $\chi_k(\omega I_4) = \omega^k $, and $\iota$ is the trivial character.  The Lusztig sheaf corresponding to the trivial character is the Springer sheaf $\mu_*\shq_{\tilde{\cn}}$. Equation~\ref{e.springercorr} becomes
\[
\mu_*\shq_{\tilde{\cn}}[\dim\cn] = \bigoplus_{\lambda\vdash 4} IC(\overline{\co}_\lambda, \cl_{\iota})\otimes V_{\lambda}.
\]
 
The Lusztig sheaf corresponding to $\chi_2$ is
\[
\A_{\chi_2} = \left( IC(\overline{\co}_{[4]}, \cl_{\chi_2}) \otimes V_{[2]} \right) \oplus  \left( IC(\overline{\co}_{[2,2]}, \cl_{\chi_2} ) \otimes V_{[1,1]}\right),
\]
where the vector spaces $V_\lambda$ for $\lambda\vdash 2$ are representations of $W(L) \simeq S_2$.
Similarly, the Lusztig sheaves corresponding to $\chi_1$ and $\chi_3$ are:
\[
\A_{\chi_1} = IC(\overline{\co}_{[4]}, \cl_{\chi_1}) \otimes V_{[1]} \;\textup{ and } \; \A_{\chi_3} = IC(\overline{\co}_{[4]}, \cl_{\chi_3}) \otimes V_{[1]}
\] where $V_{[1]}$ corresponds to the only representation of the trivial group $W(L)\simeq S_1$ in both these cases.
Since $\dim(V_{[1,1]}) = \dim(V_{[2]}) = 1$ and $\dim(V_{[1]}) =1 $, we see that the only simple perverse sheaves with multiplicity more than one in this case are those coming from the Springer sheaf.
\end{example}

\vspace*{.05in}

In the next sections, we apply the decomposition theorem to the extended Springer resolution when $G=SL_n(\C)$.  We will show that we recover \textit{all} of the Lusztig sheaves $\mathbb A_\chi$ as summands of the pushforward of the constant sheaf.  That is, we recover all the simple perverse sheaves on the nilpotent cone.  Moreover, each occurs with multiplicity given by the dimension of the corresponding irreducible representation of $S_{n/d}$.

%%%%%%%%%%%%%%%%%%

\section{Pushing forward the constant sheaf to $\tilde{\mathcal N}$} \label{sec.pushingforward}
The main results of the next few sections concern the pushforward of the constant sheaf $\shq_{\tilde{\cm}}$ via
the composition
$$
\psi: \tilde{\cm} \xrightarrow{\;\tilde{\map}\;}\tilde{\cn}  \xrightarrow{\;\mu\;}  \cn.
$$
In this section, we study the pushforward under the first map
$\tilde{\map}: \tilde{\cm} \to \tilde{\cn}$. We prove some of the needed results in a more general setting.  The main general result is Proposition~\ref{prop.pushforward}.

\subsection{Pushing forward along a finite quotient map} \label{s.local}
Throughout this subsection, $X$ will denote an $n$-dimensional complex
 variety with a left action of a finite abelian group $Z$.  It will frequently be convenient
 to use the right $Z$-action on $X$ defined by
 the formula $xz := z^{-1} x$; we write $Y=X/Z$ for the quotient of $X$ by $Z$ (with either action).
 We will assume that there is an open set $X_{reg} \subset X$ such that $Z$ acts freely on $X_{reg}$, and let $Y_{reg} = X_{reg}/Z$.  We have the following commutative
 diagram, where the horizontal maps are open embeddings, and the vertical maps are quotient maps.
 $$
 \xymatrix{
 X_{reg} \ar[r]^i \ar[d]_\rho& X \ar[d]^\pi \\
 Y_{reg} \ar[r]^j&  Y
 }
 $$
 The maps $\pi$ and $\rho$ are finite, so they are trivially both small and semi-small.
 Moreover, the map $\rho$ is \'etale.
The fibers of $\pi$ are exactly the orbits of $Z$ on $X$.  

Let $\chi: Z \to \C^*$ denote a character of $Z$, and let $\cl_{\chi}$ denote the local
system defined, as in Section~\ref{sec.local-system-set-up}, by the sheaf of locally constant sections of the line bundle $\cv_{\chi}=X_{reg} \times^Z \C_{\chi}$ over 
$X_{reg}/Z = Y_{reg}$.  These local systems are studied in \cite[Section 3.2]{AHJR2}; the following lemma is essentially a special
case of \cite[Lemma 3.6]{AHJR2}.

\begin{Lem} \label{lem.pushfree}
With notation as above, we have
$$
\rho_* \shq_{X_{reg}} = \bigoplus_{\chi \in \widehat{Z}}  \cl_{\chi}.
$$
\end{Lem}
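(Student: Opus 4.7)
The plan is to exploit the fact that $\rho: X_{reg} \to Y_{reg}$ is a finite \'etale Galois cover with deck transformation group $Z$; this is precisely the content of the hypothesis that $Z$ acts freely on $X_{reg}$. Since $\rho$ is finite, $\rho_*\shq_{X_{reg}}$ is a local system on $Y_{reg}$ of rank $|Z|$, and it inherits a canonical $Z$-action from the deck transformations $z: X_{reg} \to X_{reg}$.

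Because $Z$ is a finite abelian group whose order is invertible in our coefficients (extending scalars to $\C$ so every character is defined), the orthogonal idempotents $e_\chi = \frac{1}{|Z|}\sum_{z\in Z} \chi(z^{-1}) z$ in the group algebra produce a canonical decomposition
\[
\rho_* \shq_{X_{reg}} \;=\; \bigoplus_{\chi \in \widehat{Z}}\, \bigl(\rho_* \shq_{X_{reg}}\bigr)_\chi
\]
into $\chi$-isotypic summands, each a rank-one local system on $Y_{reg}$. The remaining task is to identify $\bigl(\rho_*\shq_{X_{reg}}\bigr)_\chi$ with $\cl_\chi$, and I would do this \'etale-locally. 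Over an \'etale open $U \to Y_{reg}$ trivializing $\rho$ as $U \times Z \to U$, the restriction of $\rho_*\shq_{X_{reg}}$ becomes the constant sheaf with fiber the regular representation $\C[Z]$, and $e_\chi$ cuts out the constant line with fiber $\C_\chi$; meanwhile $\cl_\chi = X_{reg}\times^Z \C_\chi$ restricts to the same constant line on $U$. The transition cocycles on overlaps agree on both sides, because both are determined by the same $Z$-torsor $\rho$ acting on the one-dimensional representation $\C_\chi$, so the local isomorphisms glue to a global one.

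The main subtlety—more bookkeeping than genuine obstacle—is pinning down conventions: whether the deck-transformation action on $\rho_*\shq_{X_{reg}}$ identifies the $\chi$-isotypic piece with $\cl_\chi$ or with $\cl_{\chi^{-1}}$, and the coefficient issue that the paper uses $\QQ$-sheaves while characters take values in $\C^*$. The displayed decomposition should therefore be read after extending scalars (or by grouping Galois-conjugate characters together if one insists on $\QQ$-coefficients). With those conventions fixed, the argument reduces to the standard structure theorem for the pushforward of the constant sheaf along a finite Galois \'etale cover, which is why the paper notes that this is essentially a special case of \cite[Lemma 3.6]{AHJR2}.
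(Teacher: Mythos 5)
Your argument is correct and is essentially the standard one: the paper itself gives no proof of this lemma, deferring to \cite[Lemma 3.6]{AHJR2}, and your isotypic decomposition of $\rho_*\shq_{X_{reg}}$ under the deck-transformation action of $Z$ for the finite Galois \'etale cover $\rho$ is exactly the argument behind that reference. The convention caveats you flag (coefficients containing the character values, and $\chi$ versus $\chi^{-1}$) are the right ones and do not affect the substance.
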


For $\chi \in  \widehat{Z}$, define
\[
X_\chi = \{x\in X \mid Z^x \subseteq \ker(\chi)\}.
\]
Since $Z$ is abelian, the set $X_{\chi}$ is $Z$-stable; let $Y_{\chi} = X_{\chi}/Z = \pi(X_{\chi})$.  Because
$X_{\chi}$ is $Z$-stable, if $y \in Y_{\chi}$,
the entire preimage $\pi^{-1}(y)$ lies in $X_{\chi}$.  In fact, since $Z$ is abelian and
$\pi^{-1}(y)$ is a single $Z$-orbit, the stabilizer groups $Z^x$ are the same for all $x \in \pi^{-1}(y)$.
 
Given $\chi \in \widehat{Z}$, write $H_{\chi} = \ker \chi$.  If $\chi$ is understood, we
write $H = H_{\chi}$.
We can view $\chi$ either
as a character of $Z$ or of $Z/H$.

\begin{Lem} \label{lem.open}
The set $X_{\chi}$ is open in $X$.  Hence $Y_{\chi}$ is open
in $Y$.
\end{Lem}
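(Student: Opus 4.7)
The plan is to exhibit $X_\chi$ as the complement of a finite union of closed subsets of $X$. For each $z \in Z$, the fixed locus
\[
X^z = \{x \in X \mid z \cdot x = x\}
\]
is closed in $X$, since it is the equalizer of the regular morphism $x \mapsto z\cdot x$ with the identity. The condition $Z^x \subseteq H_\chi$ is equivalent to saying that no element outside $H_\chi$ fixes $x$, so
\[
X_\chi = \bigcap_{z \in Z \setminus H_\chi} (X \setminus X^z) = X \setminus \bigcup_{z \in Z \setminus H_\chi} X^z.
\]
Since $Z$ is finite, the right-hand union is a finite union of closed sets, hence closed, so $X_\chi$ is open.

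For the second statement, I would argue that $X_\chi$ is a $Z$-stable open subset (stability is immediate from the fact that $Z$ is abelian: for any $z' \in Z$ and $x \in X_\chi$, one has $Z^{z'x} = Z^x \subseteq H_\chi$). Therefore $\pi^{-1}(Y_\chi) = X_\chi$ is open in $X$. Since $Y$ carries the quotient topology for the finite group action, a subset of $Y$ is open precisely when its preimage under $\pi$ is open; alternatively, one may invoke the fact that quotient maps by finite groups are open. Either way, $Y_\chi$ is open in $Y$.

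I do not anticipate any real obstacle here: the only thing one needs is that fixed loci of individual group elements are closed, together with finiteness of $Z$ to pass from pointwise closedness to closedness of the union. The $Z$-stability of $X_\chi$ is the one place where commutativity of $Z$ is used, and it is immediate.
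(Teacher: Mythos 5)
Your proof of the openness of $X_\chi$ is exactly the paper's argument: write $X \setminus X_\chi = \bigcup_{g \in Z \setminus H_\chi} X^g$, note each fixed locus $X^g$ is closed, and use finiteness of $Z$. For the passage to $Y_\chi$ the two arguments diverge slightly but harmlessly: the paper observes that $\pi$ is proper (being finite), so $\pi(X \setminus X_\chi) = Y \setminus Y_\chi$ is closed, whereas you use $Z$-stability of $X_\chi$ (already recorded in the paper just before the lemma) to get $\pi^{-1}(Y_\chi) = X_\chi$ and then invoke the quotient-topology, or equivalently the openness, of $\pi$. This is correct, but note that the statement ``$Y$ carries the quotient topology'' is itself most easily justified by the very fact the paper uses: $\pi$ is finite, hence closed, and surjective, and a closed surjective map is topologically a quotient map. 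So your route is a valid, mildly repackaged version of the same underlying point rather than a genuinely independent argument; if you prefer to keep it, it would be cleaner to say explicitly that $\pi$ is finite, hence closed and surjective, hence submersive, and then conclude from $\pi^{-1}(Y_\chi) = X_\chi$ being open.
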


\begin{proof}
By definition,
$x \in X \setminus X_{\chi}$ if and only if there exists
$g \in Z \setminus H$ such that $x \in X^g=\{x\in X\mid gx=x\}$.  Thus,
$$
X \setminus X_{\chi} = \cup_{g \in Z \setminus H} X^g.
$$
Since each $X^g$ is closed and $Z$ is finite, 
$X \setminus X_{\chi}$ is closed and thus $X_{\chi}$ is open in $X$.
Finally, $\pi: X \to Y$ is proper, so $\pi(X \setminus X_{\chi})
= Y \setminus Y_{\chi}$ is closed in $Y$.  Thus, $Y_{\chi}$ is open
in~$Y$.
\end{proof}

\begin{Lem}
The group $Z/H$ acts freely on $X_{\chi}/H$.
\end{Lem}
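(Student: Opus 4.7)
The plan is to unwind the definitions and use the hypothesis $Z^x \subseteq H$ directly. First I would observe that since $Z$ is abelian, $H=H_\chi$ is a normal subgroup, so $Z/H$ makes sense as a group, and it acts naturally on the quotient $X/H$. Because $X_\chi$ is $Z$-stable (as noted in the paragraph defining $X_\chi$), it is in particular $H$-stable, so the quotient $X_\chi/H$ is a well-defined $Z/H$-stable subset of $X/H$, and the induced action of $Z/H$ on $X_\chi/H$ is the one under consideration.

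To verify freeness, I would pick $x \in X_\chi$, write $[x]_H$ for its class in $X_\chi/H$, and suppose $gH \in Z/H$ fixes $[x]_H$. By definition of the quotient action this means $g \cdot x$ and $x$ lie in the same $H$-orbit, so there exists $h \in H$ with $gx = hx$. Then $h^{-1}g \in Z^x$, and the defining property of $X_\chi$, namely $Z^x \subseteq H$, forces $h^{-1}g \in H$, hence $g \in H$, i.e.\ $gH$ is the identity of $Z/H$. This is exactly freeness.

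There is no real obstacle here: the lemma is essentially a formal consequence of the definition of $X_\chi$ together with the abelianness of $Z$ (which is what lets us pass to the quotient group $Z/H$ at all and what guarantees that the stabilizers $Z^x$ are preserved as we move along a $Z$-orbit, so that the notion is well-defined on $X_\chi/H$). The only subtlety worth flagging is making sure the reader sees that we are using $Z$ abelian in two places: once so that $H \trianglelefteq Z$, and once implicitly in reading off the $Z/H$-action on $X_\chi/H$.
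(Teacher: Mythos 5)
Your proof is correct and is essentially the same argument as the paper's: from $gH\cdot[x]_H=[x]_H$ you get $gx=hx$ with $h\in H$, hence $h^{-1}g\in Z^x\subseteq H$ and so $g\in H$. The extra remarks about $Z$ being abelian are fine but add nothing beyond what the paper already assumes throughout the section.
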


\begin{proof} 
Let $\bar{z}, \bar{x}$ denote the images of $x\in Z$ and $x\in X_\chi$ in $Z/H$ and $X_\chi/H$, respectively.
Suppose $z\in Z$ and $x\in X_\chi$ such that $\bar{z} \bar{x} = \bar{x}$.  We must show
that $\bar{z} = \bar{e}$.  By definition, $\bar{z} \bar{x} = \bar{x}$ implies $z x = h x$ for some $h \in H$.  Thus,
$z h^{-1} x = x$, implying $z h^{-1} \in Z^x \subset H$.
Hence $z \in H$, so $\bar{z} = \bar{e}$, as desired.
\end{proof}

By definition, the open set $X_{\chi}$ contains $X_{reg}$ and hence $Y_{\chi}$ contains
$Y_{reg}$.   

\begin{Lem}\label{lemma: Z/H action} We have
$$
\cv_{\chi} = X_{reg}/H \times^{Z/H} \C_{\chi}.
$$
\end{Lem}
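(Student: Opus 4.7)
The plan is to perform the quotient by $Z$ in two stages: first by $H = \ker\chi$, and then by the quotient group $Z/H$. The key observation that makes this work is that $H$, being the kernel of $\chi$, acts trivially on $\C_\chi$. Concretely, for $h \in H$ and $v \in \C_\chi$ we have $h^{-1} v = \chi(h)^{-1} v = v$, so in the diagonal action of $Z$ on $X_{reg} \times \C_\chi$ (where $(x,v) \cdot z = (xz, z^{-1}v)$), the subgroup $H$ acts nontrivially only on the first factor.

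Given this, I would first take the $H$-quotient of the product to obtain the identification
\[
(X_{reg} \times \C_\chi)/H \;\cong\; (X_{reg}/H) \times \C_\chi.
\]
Because $Z$ is abelian, $H$ is normal in $Z$, so there is a well-defined residual diagonal action of $Z/H$ on this product: on the first factor via the quotient of the original $Z$-action, and on $\C_\chi$ via the character $\chi$ viewed as a character of $Z/H$ (which is well defined since $H = \ker\chi$). Taking a further quotient by $Z/H$ and using the standard identification of iterated quotients for a normal subgroup yields
\[
\cv_\chi \;=\; X_{reg} \times^Z \C_\chi \;=\; \bigl((X_{reg} \times \C_\chi)/H\bigr)\big/(Z/H) \;=\; X_{reg}/H \times^{Z/H} \C_\chi,
\]
which is the desired equality. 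Freeness of the $Z/H$-action on $X_{reg}/H$ (needed for this to be a genuine line bundle) follows from the previous lemma together with the inclusion $X_{reg} \subseteq X_\chi$.

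I do not expect any serious obstacle: the argument is a straightforward bookkeeping of group actions, and the entire content is the triviality of $H$ on $\C_\chi$ combined with the associativity of iterated quotients for a normal subgroup. The only point that requires any care is keeping the twist by $\chi$ consistent when descending to the $Z/H$-action, but this is automatic once $\chi$ is viewed as a character of $Z/H$.
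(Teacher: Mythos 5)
Your argument is correct and is essentially the paper's own proof: both rest on the iterated quotient $(X_{reg}\times\C_\chi)/Z = \bigl((X_{reg}\times\C_\chi)/H\bigr)/(Z/H)$ together with the observation that $H=\ker\chi$ acts trivially on $\C_\chi$, so the inner quotient is $(X_{reg}/H)\times\C_\chi$. The remark on freeness of the $Z/H$-action is a harmless addition; no gap here.
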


\begin{proof}
By definition, $\cv_{\chi} = X_{reg} \times^Z \C_{\chi}$ equals $(X_{reg} \times \C_{\chi})/Z$,
where $Z$ acts by the mixing action $z(u,c) = (u z^{-1}, zc)$.
We have
\begin{equation} \label{e.finitequotient}
(X_{reg} \times \C_{\chi})/Z =  \big( (X_{reg} \times \C_{\chi})/H \big) / (Z/H).
\end{equation}
Since $H$ acts trivially on $\C_{\chi}$, the right hand side
of~\eqref{e.finitequotient} is equal to
$( (X_{reg}/H) \times \C_{\chi})/(Z/H)$, which by definition
equals $X_{reg}/H \times^{Z/H} \C_{\chi}$.
\end{proof}

\begin{Prop}\label{prop.extension} The local system $\cl_{\chi}$ on $Y_{reg} = X_{reg}/Z$
extends to a local system \textup{(}also denoted by $\cl_{\chi}$\textup{)}
on $Y_{\chi}$.
\end{Prop}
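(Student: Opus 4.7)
The plan is to construct the extension by repeating the associated-bundle construction of $\cv_\chi$ with $X_{reg}$ replaced by the larger open set $X_\chi$, after first dividing out by the kernel $H = \ker\chi$ on which $\chi$ is trivial.

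First I would form the intermediate quotient $X_\chi/H$. Since $H$ is a finite group acting on the variety $X_\chi$, this quotient exists, and by the preceding lemma the induced $Z/H$-action on $X_\chi/H$ is free. Consequently the further quotient $(X_\chi/H)/(Z/H)$ is a variety and, tautologically, equals $X_\chi/Z = Y_\chi$. Because $H \subseteq \ker\chi$, the character $\chi$ descends to a character of $Z/H$, yielding a one-dimensional $Z/H$-representation $\C_\chi$.

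Next I would define
$$
\cv'_\chi := X_\chi/H \times^{Z/H} \C_\chi,
$$
which is a line bundle over $Y_\chi$ precisely because $Z/H$ acts freely on $X_\chi/H$, and take the sheaf of locally constant sections to be the desired local system on $Y_\chi$. The final step is to verify that restricting to $Y_{reg}$ recovers $\cl_\chi$: base change along the open embedding $Y_{reg} \hookrightarrow Y_\chi$ identifies the restriction of $\cv'_\chi$ with $X_{reg}/H \times^{Z/H} \C_\chi$, and by Lemma~\ref{lemma: Z/H action} this is exactly $\cv_\chi$. Hence the two local systems agree on $Y_{reg}$, and we may retain the name $\cl_\chi$ for the extension.

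I do not anticipate any serious obstacle. The only point requiring care is keeping track of the two-step quotient $X_\chi \to X_\chi/H \to Y_\chi$ and checking that the mixed-space construction is compatible with this factorization, but this is completely formal since $H$ and $Z/H$ are both finite and the second quotient is free.
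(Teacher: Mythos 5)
Your construction is exactly the paper's: both form the line bundle $X_\chi/H \times^{Z/H} \C_\chi$ over $Y_\chi = (X_\chi/H)/(Z/H)$, using the free $Z/H$-action on $X_\chi/H$, and then invoke Lemma~\ref{lemma: Z/H action} to identify its restriction to $Y_{reg}$ with $\cl_\chi$. This is correct and essentially identical to the argument in the paper.
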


\begin{proof}
It is equivalent to show that there is a local system
on $Y_{\chi}$ whose restriction to $Y_{reg}$ is $\cl_{\chi}$. 
Since $Z/H$ acts freely on $X_{\chi}/H$, we have a local system associated to the line bundle
$X_{\chi}/H \times^{Z/H} \C_{\chi}$ on $(X_{\chi}/H) / (Z/H) = X_{\chi}/Z$.
By Lemma~\ref{lemma: Z/H action}, the restriction of this local system 
to $X_{reg}/H$ is equal to $\cl_{\chi}$.
\end{proof}

Recall that $n = \dim X = \dim Y$.
As noted above, given $y \in Y$, the stabilizer group $Z^x$ is independent of the choice of $x \in \pi^{-1}(y)$; a choice of such
$x$ gives an identification of $\pi^{-1}(y)$ with $Z/Z^x$.
Given a complex $\cf$ of sheaves on $Y$, the cohomology sheaf
$\ch^i(\cf)$ has stalk at $y \in Y$ denoted by
$\ch^i_y (\cf)$.  
We have $\dim \ch^i_y(\pi_* \shq_X) = \dim H^i(\pi^{-1}(y))$.
Because $\pi$ is a finite map, this dimension is 
$0$ if $i>0$, and is equal to $|\pi^{-1}(y)| = |Z/Z^x|$
if $i=0$.

The next result describes the pushforward of the constant sheaf on $X$ under the quotient map $\pi:X \to Y$.
The strategy of the proof is to first use the decomposition theorem and Lemma
\ref{lem.pushfree} to show that $\pi_* \shq_X[n]$ equals the expression in the statement of the proposition plus possible extra terms, and then to use dimension arguments to show that these extra terms do not appear.

\begin{Prop} \label{prop.pushforward} Let $\pi: X \to Y$ be the quotient of an
$n$-dimensional variety $X$ by the action of a finite abelian group $Z$, such that $Z$ acts freely on an open subset $X_{reg}$ of $X$.
Assume that $X$ is rationally smooth.  With notation as above, we have
\begin{enumerate}
\item $\pi_* \shq_X[n] = \bigoplus_{\chi \in \widehat{Z}} IC(Y, \cl_{\chi})$, and
\item $\dim \ch^{-n}_y( IC(Y, \cl_{\chi}))$ is equal to 
$1$ if $y \in Y_{\chi}$ and is $0$ otherwise.
\item If $y \not\in Y_{\chi}$, then 
\begin{equation} \label{e.stalkzero}
\ch^{k}_y (IC(Y, \cl_{\chi})) = 0 \mbox{ for all integers }k.
\end{equation}
  Hence, if $i: Y \setminus Y_{\chi}\hookrightarrow Y$
is the inclusion, we have $i^* IC(Y, \cl_{\chi}) = 0$
in $D^b(Y \setminus Y_{\chi})$.

\item If $j_{\chi}: Y_{\chi} \to Y$ is the inclusion, then $IC(Y, \cl_{\chi}) =(j_{\chi})_! \cl_{\chi}[n]$.
\end{enumerate}

\end{Prop}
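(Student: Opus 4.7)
The plan hinges on the canonical $Z$-action on $\pi_*\shq_X$ arising from the $Z$-equivariance of $\pi$ (with $Z$ acting trivially on $Y$). Since $|Z|$ is invertible in $\QQ$, this yields an isotypic direct-sum decomposition
\[
\pi_*\shq_X = \bigoplus_{\chi \in \widehat{Z}} (\pi_*\shq_X)^\chi,
\]
and the strategy is to identify each $\chi$-isotypic summand with $\ch^{-n}(IC(Y,\cl_\chi))$.

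For (1), I would apply Theorem~\ref{decomp}(2) to the finite (hence small) map $\pi$ with $X$ rationally smooth, concluding that $\pi_*\shq_X[n]$ is a semisimple perverse sheaf. The crucial rigidity is that finiteness of $\pi$ makes $\pi_*\shq_X$ a constructible sheaf concentrated in degree $0$, so $\pi_*\shq_X[n]$ and each of its direct summands is concentrated in degree $-n$. Any IC summand $IC(\bar S, \cl')$ with $k := \dim \bar S < n$ has $\ch^{-k}\ne 0$ generically on $S$, with $-k > -n$, which is impossible. Hence every summand has full support $Y$, and restricting to $Y_{reg}$ via Lemma~\ref{lem.pushfree} identifies the summands as exactly $\{IC(Y,\cl_\chi)\}_{\chi \in \widehat{Z}}$, each with multiplicity one. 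Matching with the isotypic decomposition yields $(\pi_*\shq_X[n])^\chi = IC(Y,\cl_\chi)$.

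It follows that $IC(Y,\cl_\chi) = \cg_\chi[n]$, where $\cg_\chi := (\pi_*\shq_X)^\chi$ is a constructible sheaf. The stalk $(\pi_*\shq_X)_y = \QQ[\pi^{-1}(y)] \cong \Ind_{Z^x}^Z \QQ$ for any $x \in \pi^{-1}(y)$ has its $\chi$-isotypic component of dimension one when $Z^x \subseteq \ker\chi$ (i.e.\ $y \in Y_\chi$) and zero otherwise by Frobenius reciprocity; this proves (2), and (3) follows from the concentration of $IC(Y,\cl_\chi)$ in degree $-n$. For (4), (3) implies $\cg_\chi = (j_\chi)_! j_\chi^*\cg_\chi$, so it remains to show $j_\chi^*\cg_\chi = \cl_\chi$. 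I would factor $\pi|_{X_\chi}$ as $X_\chi \to X_\chi/H \to Y_\chi$ with $H = \ker\chi$; the second map is a free $Z/H$-quotient, and Lemma~\ref{lem.pushfree} applied to it gives $(X_\chi/H \to Y_\chi)_*\shq_{X_\chi/H} = \bigoplus_{\psi \in \widehat{Z/H}} \cl_\psi$. By finite base change this equals the $H$-invariants of $\pi_*\shq_X|_{Y_\chi}$, and extracting the $\chi$-isotypic component yields $\cl_\chi$.

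The main obstacle is the rigid matching of the decomposition-theorem summands against the canonical isotypic decomposition; this hinges on the degree-concentration forced by finiteness of $\pi$, which eliminates potential extra IC summands on proper subvarieties. The intermediate-quotient argument via $X_\chi/H$ is the technical ingredient needed to upgrade the local-system identification from $Y_{reg}$ to all of $Y_\chi$, whose smoothness (or rational smoothness) is not assumed.
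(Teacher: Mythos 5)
Your proposal is correct, but it gets to the conclusion by a genuinely different mechanism than the paper. The paper first shows each $IC(Y,\cl_{\chi})$ occurs by restricting to $Y_{reg}$, and then kills the possible extra term $\ck$ by a pointwise count: using Proposition~\ref{prop.extension} it knows $\dim \ch^{-n}_y IC(Y,\cl_{\chi})=1$ for $y\in Y_{\chi}$, and the observation that exactly $|Z/Z^x|$ characters satisfy $y\in Y_{\chi}$ forces $\ch^\bullet_y(\ck)=0$ for every $y$; parts (2)--(4) then fall out of the same count. You instead exploit exactness of the finite pushforward, so that $\pi_*\shq_X[n]$ and all its summands are concentrated in cohomological degree $-n$, which immediately rules out IC summands supported on proper subvarieties (their top nonvanishing cohomology sheaf sits in the wrong degree), and a rank count over $Y_{reg}$ via Lemma~\ref{lem.pushfree} then pins down the summands with multiplicity one; you get (2) by computing the $\chi$-isotypic part of the stalk $\QQ[\pi^{-1}(y)]\cong\Ind_{Z^x}^Z\QQ$ via Frobenius reciprocity, and (4) by refactoring $\pi$ over $Y_{\chi}$ as $X_{\chi}\to X_{\chi}/H\to Y_{\chi}$ and taking $H$-invariants, which is essentially a sheaf-level re-derivation of Lemma~\ref{lemma: Z/H action} and Proposition~\ref{prop.extension}. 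Your route buys a more structural proof (degree concentration plus the canonical $Z$-isotypic decomposition does the work, and no character-counting is needed), while the paper's counting argument avoids invoking the equivariant/isotypic structure and produces the stalk dimensions of (2) as a by-product. Two small points you should make explicit: the full-support step uses irreducibility of $Y$ (as does the paper), and the identification $(\pi_*\shq_X[n])^{\chi}=IC(Y,\cl_{\chi})$ needs the remark that the decomposition of Lemma~\ref{lem.pushfree} is precisely the isotypic decomposition over $Y_{reg}$, together with the fact that a direct summand of a multiplicity-free sum of simple perverse sheaves is a subsum; both are easy and do not affect correctness.
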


\begin{proof}
Because $X$ is rationally smooth, $\shq_X[n]$ is a perverse sheaf.  Since $\pi$ is a small
map, $\pi_* \shq_X[n]$ is perverse as well.  By the Decomposition Theorem (Theorem~\ref{decomp} above),
 \begin{equation} \label{e.pushforwardgeneral}
  \pi_* \shq_X[n] = \bigoplus_i IC(Y_i, \cl_i), 
 \end{equation}
where $Y_i$ is a closed subvariety of $Y$ and $\cl_i$ is a local system on
an open subvariety of $Y_i$.
We claim
 that for each $\chi \in \widehat{Z}$, the sheaf $IC(Y, \cl_{\chi})$ occurs in the sum, so
 the sum takes the form
\begin{equation} \label{e.pushforward}
\pi_* \shq_X[n] = \left(\bigoplus_{\chi \in \widehat{Z}} IC(Y, \cl_{\chi})\right)
 \oplus \ck,
 \end{equation}
where $\ck$ is a sum of $IC$ complexes
 for local systems on subvarieties of $Y$.  
To verify the claim, recall that by Lemma \ref{lem.pushfree},
$$
\rho_* \shq_{X_{reg}}[n] = \bigoplus_{\chi \in \widehat{Z}}  \cl_{\chi}[n].
$$
Each term in the decomposition of $\pi_* \shq_X[n]$ is of the form $IC(Y_i, \cl_i)$, where $Y_i$ is a closed
subvariety of $Y$.  Recall that $j: Y_{reg} \hookrightarrow Y$.
By base change, $j^* \pi_* = \rho_* i^*$. Hence, the
restriction of the right side of~\eqref{e.pushforwardgeneral} to $Y_{reg}$
equals $\bigoplus_{\chi \in \widehat{Z}}  \cl_{\chi}[n]$.  Therefore, for each $\chi$,
there exists an $i$ such that the restriction of $IC(Y_i, \cl_i)$ to $Y_{reg}$ is
$\cl_{\chi}[n]$.  To finish proving the claim, it suffices to verify that
the only intersection cohomology complex on
$Y$ whose restriction to $Y_{reg}$ is $\cl_{\chi}[n]$ is $IC(Y, \cl_{\chi})$.  To see this, suppose
that the restriction of 
$IC(Y_i, \cl_i)$ to $Y_{reg}$ equals $\cl_{\chi}[n]$.  Since the support of $IC(Y_i, \cl_i)$
is $Y_i$, we must have $Y_i = Y$, and then $\cl_i$ is a local system on an open subset $V_i$ of
$Y$.  This implies that the restrictions of $\cl_{\chi}$ and $\cl_i$ to $V_i \cap Y_{reg}$
agree.  But if two local systems on open sets agree on the intersection of those open sets,
then the corresponding $IC$-complexes are canonically isomorphic.  We conclude that $IC(Y_i, \cl_i) =
IC(Y, \cl_{\chi})$, as desired.  This proves the claim.

Now let $y \in Y$ and $x \in \pi^{-1}(y)$.  
By the remarks preceding the proposition,
 \begin{equation} \label{e.dimension}
\dim \ch^{-n}_y(\pi_* \shq_X[n]) =  \dim \ch^0_y(\pi_* \shq_X) = |\pi^{-1}(y)|  = |Z/Z^x|,
 \end{equation}
 and 
\begin{equation} \label{e.dimension2}
 \dim \ch^{k}_y(\pi_* \shq_X[n]) = 0 \mbox{ if } k \neq -n.
 \end{equation}
  
Combining \eqref{e.pushforward}, \eqref{e.dimension}, and \eqref{e.dimension2}, we see that for any $y \in Y$,
\begin{equation} \label{e.pushforward1}
|Z/Z^x| = \dim \ch^{-n}_y(\pi_* \shq_X[n]) = \sum_{\chi \in \widehat{Z}} \dim \ch^{-n}_y(IC(Y, \cl_{\chi}))
 + \dim H^{-n}_y(\ck)
\end{equation} 
and 
\begin{equation} \label{e.cohzero}
\dim \ch^{k}_y(IC(Y, \cl_{\chi})) = 
\dim \ch^{k}_y(\ck) = 0 \mbox{ if }k \neq -n.
\end{equation}

 By Proposition~\ref{prop.extension}, $\cl_{\chi}$ extends to a local system (again denoted
 by $\cl_{\chi}$) on $Y_{\chi}$ so $IC(Y, \cl_{\chi})|_{Y_{\chi}} = \cl_{\chi}[n]$.  Therefore if $y \in Y_{\chi}$, then  $\dim \ch^{-n}_y (IC(Y, \cl_{\chi})) = 1$.  
We claim that the number of characters $\chi\in \widehat{Z}$ such
that $y \in Y_{\chi}$ is $|Z/Z^x|$.
Indeed, $y\in Y_{\chi}$
 if and only if $Z^x$ is contained in $\ker \chi$.
 This is equivalent to saying that $\chi$ is pulled
 back from a character $\bar{\chi}$ of $Z/Z^x$
 by the quotient map $Z \to Z/Z^x$.  Thus, the
 number of $\chi \in \widehat{Z}$ such that $y \in Y_{\chi}$
 is equal to the number of distinct characters of
 $Z/Z^x$, which is equal to $|Z/Z^x|$ since $Z/Z^x$
 is an abelian group.  This proves the claim.
 
The discussion in the previous paragraphs shows that the contribution to the sum on the
right hand side of \eqref{e.pushforward1} from the characters
$\chi$ such that $y \in Y_{\chi}$ is equal to
$|Z/Z^x|$.  Hence, for $k = -n$ and all $y\in Y$, we have $\ch^{k}_y(\ck) = 0$.  As this equality is also
true for $k\neq-n$ by \eqref{e.cohzero}, it holds for all integers $k$.  Since $\ck$ is a direct sum of $IC$-sheaves,
this is only possible if $\ck = 0$.  Assertion (1) of the
proposition follows.  Similarly, if $y \not\in Y_{\chi}$, we have
$\dim \ch^{-n}_y (IC(Y, \cl_{\chi})) = 0$.  Since we have already
proved that  if $y \in Y_{\chi}$, then $\dim \ch^{-n}_y (IC(Y, \cl_{\chi})) =1$, 
assertion (2) follows.  Since $\dim \ch^{k}_y (IC(Y, \cl_{\chi})) = 0$ for $k \neq -n$
by \eqref{e.cohzero}, assertion (3) follows as well.
Finally, assertion (4) is a consequence of (3) and the fact that the restriction of $IC(Y,\cl_{\chi})$ to
$Y_{\chi}$ is $\cl_{\chi}[n]$.
\end{proof}

%%%%%%%%%%%%%%
\subsection{The subvarieties $\toric_{\chi}$ and $\toric_{ad, \chi}$} \label{s.toric}
In this section, we consider the setup from Section~\ref{s.local}, in the case that $X=\toric$, $Y=\toric_{ad}$, and $Z$ is the center of $G=SL_n(\C)$.  We write $\toric_{ad, reg}= (\toric_{ad})_{reg}$ and $\toric_{ad, \chi}= (\toric_{ad})_{\chi}$ .  The goal of this section is 
to give an explicit description of 
$\toric_{\chi}$ and $\toric_{ad, \chi}$ for each $\chi\in \widehat{Z}$ (see Proposition \ref{prop.wchi} and Corollary \ref{cor.wchi}).

In this setting $Z = \{\omega^k I_n \} \cong \Z_n$ where
$\omega = \exp(2 \pi i/n)$ and $I_n$ is the $n \times n$ identity
matrix.  We use the notation $\omunder^k = \omega^k I_n$.  Let $\chi_k: Z \to \C^*$ be the character defined by
$\chi_r(z) = z^k$ for $z \in Z$.

\begin{Lem} \label{lem.chir}
\begin{enumerate}
\item Let $r = n/\gcd(n,c)$.  Then $\ker \chi_c = \{ 1, \omunder^r, \omunder^{2r}, \ldots \}$.  
\item Suppose $\chi \in \widehat{Z}$ has order $d$.  Then $\ker \chi = \ker \chi_{n/d} = 
\{ 1, \omunder^d, \omunder^{2d}, \ldots \}$ .
\end{enumerate}
\end{Lem}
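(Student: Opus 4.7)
The plan is to unwind the definitions and then reduce part (2) to part (1). For part (1), I would compute $\chi_c(\omunder^k) = \omega^{kc} = \exp(2\pi i kc/n)$, so $\omunder^k \in \ker\chi_c$ if and only if $n \mid kc$. Writing $n = r\gcd(n,c)$ and $c = s\gcd(n,c)$ with $\gcd(r,s) = 1$, the divisibility $n \mid kc$ becomes $r \mid ks$, which by coprimality is equivalent to $r \mid k$. This is exactly the asserted description of $\ker\chi_c$, and as a byproduct it recovers the standard fact that the order of $\chi_c$ equals $r = n/\gcd(n,c)$.

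For part (2), every character of the cyclic group $Z \cong \Z_n$ is of the form $\chi_c$ for some $c$ with $0 \le c < n$, so I would write $\chi = \chi_c$. Since $\chi$ has order $d$, the order formula from part (1) gives $d = n/\gcd(n,c)$, equivalently $\gcd(n,c) = n/d$. Applying part (1) with $r = d$ then shows $\ker\chi = \ker\chi_c = \{\omunder^{jd} : j \in \Z\}$. Specializing the same computation to $c = n/d$, for which $\gcd(n, n/d) = n/d$ and hence $r = d$, identifies $\ker\chi_{n/d}$ with the same set, establishing both equalities at once.

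The argument is pure elementary cyclic-group theory, so I expect no technical obstacle. The only things worth watching are keeping straight the additive exponent $k$ in $\omunder^k$ versus the multiplicative value $\omega^{kc}$, and noting at the start of part (2) that $d$ indeed divides $|Z| = n$ so that $n/d$ is a well-defined integer and $\chi_{n/d}$ makes sense.
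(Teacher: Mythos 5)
Your proposal is correct. Part (1) is essentially the paper's argument: both reduce membership of $\omunder^k$ in $\ker\chi_c$ to the divisibility $n \mid kc$ and identify the smallest such $k$ as $r = n/\gcd(n,c)$; your version just spells out the coprimality step ($r \mid ks$ with $\gcd(r,s)=1$ forces $r \mid k$) rather than invoking that the kernel is the cyclic group generated by the lowest power of $\omunder$ it contains. Part (2) takes a genuinely different, though equally elementary, route: the paper argues intrinsically, noting $\chi^d = 1$ gives the inclusion $\{1,\omunder^d,\omunder^{2d},\ldots\} \subset \ker\chi$ and then ruling out any $\omunder^k$ with $d \nmid k$ by a gcd contradiction with the order of $\chi$, and it never writes $\chi$ as some $\chi_c$. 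You instead use the classification of characters of the cyclic group $Z \cong \Z_n$ to write $\chi = \chi_c$, extract the order formula $\mathrm{ord}(\chi_c) = n/\gcd(n,c)$ from part (1), and then apply (1) twice to get both equalities $\ker\chi = \ker\chi_{n/d} = \{\omunder^{jd}\}$ in one stroke. Your reduction is slightly more uniform (both claims of (2) fall out of the same computation), while the paper's direct argument avoids appealing to the explicit parametrization of $\widehat{Z}$; either is perfectly adequate here, and your cautionary remarks (that $d \mid n$ so $\chi_{n/d}$ makes sense) are sound.
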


\begin{proof}
(1) $\ker \chi_c$ is the cyclic group generated by the lowest power of $\omunder$ in $\ker \chi_c$.  Since
the smallest positive integer $k$ such that $n | ck$ is $k=r$, the lowest power of $\omunder$ in $\ker \chi_c$ is~$\omunder^r$.

(2) By definition we have $\chi^k(\omunder) = \chi(\omunder^k)$,
so $\chi^d = 1$ implies $\{1,\omunder^d, \omunder^{2d}, \ldots\}\subset \ker\chi$.
Conversely, suppose $\omunder^k\in \ker \chi$, but $k$ is not a multiple
of $d$.  Then $1 \leq m = \mbox{gcd}(k,d) < d$ 
and $\omunder^m \in \ker \chi$, so $\chi^m = 1$.  This contradicts
the assumption that the order of $\chi$ is~$d$.  
\end{proof}

Let $\ga_1, \ldots, \ga_{n-1}$ be the simple roots associated to the Lie algebra $\fg=\mathfrak{sl}_n(\C)$, and $\gl_1, \ldots, \gl_{n-1}$ denote the fundamental
dominant weights (we follow the conventions of \cite[Section 13.1]{HumphreysLieAlg}).  
We let $\mu_k$ denote the unique weight
in the same coset of $\gl_i$ (modulo the root lattice) such that $\mu_k = \sum_{i = 1}^{n-1} a_{ki} \ga_i$,
where the $a_{ki}\in \QQ$ satisfy
$0 \leq a_{ki} < 1$ for all $k$.  
We say $\ga_c$ occurs in $\mu_k$ if $a_{kc} \neq 0$.

Recall that $\gl_k$ is expressed as a linear combination of simple roots via the formula 
\begin{eqnarray} \label{e.glk}
\gl_k &= & \frac{1}{n} \Big( (n-k) \ga_1 + 2(n-k) \ga_2 + \cdots +
k(n-k) \ga_k  \\ 
\nonumber & & \quad\quad + k(n-k-1) \ga_{k+1} + k(n-k-2) \ga_{k+2} + \cdots + k \ga_{n-1} \Big).
\end{eqnarray}
We can use this formula to compute $\mu_k$, as demonstrated in the next example.

\begin{example} \label{ex.sl(12)}
Let $n=12$.
We can view the weights as elements of $\R^{12}$ whose
coordinates add up to $0$, and each simple root is
$\ga_i = \gre_i - \gre_{i+1}$ for $1\leq i \leq n-1$.  Using equation~\eqref{e.glk} we compute the cosets of the fundamental dominant weights modulo the root lattice.  For example,
\begin{eqnarray*}
\lambda_1&=& \frac{1}{12} (11\alpha_1+10\alpha_2+9\alpha_3+8\alpha_4+7\alpha_5+6\alpha_6+5\alpha_7+4\alpha_8+3\alpha_9+2\alpha_{10}+\alpha_{11})\\
\lambda_2&=& \frac{1}{12} (10\alpha_1+20\alpha_2+18\alpha_3+16\alpha_4+14\alpha_5+12\alpha_6+10\alpha_7+8\alpha_8+6\alpha_9+4\alpha_{10}+2\alpha_{11})\\
\lambda_3 &=& \frac{1}{12} (9\alpha_1+18\alpha_2+27\alpha_3 + 24\alpha_5+21\alpha_5+18\alpha_6+15\alpha_7+ +12\alpha_8+9\alpha_9+6\alpha_{10}+3\alpha_{11})\\
\lambda_{4}&=& \frac{1}{12}(8\alpha_1+16\alpha_2+24\alpha_3+32\alpha_4+28\alpha_5+24\alpha_6+20\alpha_7+16\alpha_8+12\alpha_9+8\alpha_{10}+4\alpha_{11})\\
\lambda_{5}&=& \frac{1}{12}(7\alpha_1+14\alpha_2+21\alpha_3+ 28\alpha_4+35\alpha_5+30\alpha_6+25\alpha_7+20\alpha_8+15\alpha_9+10\alpha_{10}+5\alpha_{11})\\
\lambda_{6}&=& \frac{1}{12}(6\alpha_1+12\alpha_2+18\alpha_3+24\alpha_4+30\alpha_5+36\alpha_6+30\alpha_7+24\alpha_8+18\alpha_9+12\alpha_{10}+6\alpha_{11});
\end{eqnarray*}
the fundamental weights $\gl_7, \ldots, \gl_{11}$ are given by similar formulas.
Each $\mu_i$ is obtained from $\gl_i$ by taking the fractional
portion of the coefficients $a_{ik}$:
\begin{eqnarray*}
\mu_1 &=& \lambda_1\\
\mu_2 &=& \frac{1}{6}(5\alpha_1 + 4\alpha_2+3\alpha_3+2\alpha_4+\alpha_5+ 5\alpha_7+4\alpha_8+3\alpha_9 + 2\alpha_{10}+\alpha_{11})\\
\mu_3 &=& \frac{1}{4}(3\alpha_1 + 2\alpha_2+\alpha_3+3\alpha_5+2\alpha_6+\alpha_7+3\alpha_9 + 2\alpha_{10}+\alpha_{11})\\
\mu_4 &=& \frac{1}{3}(2\alpha_1+\alpha_2 + 2\alpha_4+ \alpha_5 + 2\alpha_7+\alpha_8+2\alpha_{10}+\alpha_{11})\\
\mu_5 &=& \frac{1}{12}(7\alpha_1 + 2\alpha_2+9\alpha_3+4\alpha_4+11\alpha_5+6\alpha_6+ \alpha_7+8\alpha_8+3\alpha_9+ 10\alpha_{10}+5\alpha_{11})\\
\mu_6 &=& \frac{1}{2}(\alpha_1 + \alpha_3+\alpha_5+ \alpha_7+\alpha_9 +\alpha_{11}).
\end{eqnarray*}
We can compute $\mu_i$ for $7\leq i \leq 11$ using the same methods. In this case we find:
\begin{itemize}
\item $\ga_1$ and $\ga_{11}$ occur in all $\mu_i$
\item $\ga_2$ and $\ga_{10}$ occur in all $\mu_i$ except $\mu_6$
\item $\ga_3$ and $\ga_9$ occur in all $\mu_i$ except $\mu_4, \mu_8$
\item $\ga_4$ and $\ga_8$ occur in all $\mu_i$ except $\mu_3, \mu_6, \mu_9$
\item $\ga_5$ and $\ga_7$ occur in all $\mu_i$,
\item and $\ga_6$ occurs in all $\mu_i$ for odd $i$.
\end{itemize}
\end{example}

\vspace*{.05in}

Given a weight $\gl$ of $T$, we write $e^{\gl}$ for the corresponding
function on $T$.  If $\gl$ is in the root lattice,
$e^{\gl}$ can also be viewed as a function on $T_{ad}$.
Set $x_i = e^{-\ga_i}$ and $v_i= e^{-\mu_i}$ for all $1\leq i \leq n-1$. By definition, $\toric = \spec A$, where $A = \C[v_1, \ldots, v_{n-1}, x_1, \ldots, x_{n-1}]/I$ for some ideal $I$,
and $\toric_{ad} = \spec \C[x_1, \ldots, x_{n-1}] \cong
\C^{n-1}$.  The ideal $I$ contains all elements of the form 
\[
v_1^{a_1}v_2^{a_2}\cdots v_{n-1}^{a_{n-1}} - x_1^{b_1}x_2^{b_2}\cdots x_{n-1}^{b_{n-1}}
\]
for all $a_1,a_2, \ldots, a_{n-1}, b_1,b_2, \ldots, b_{n-1}\in \Z$ such that 
\[
a_1\mu_1+a_2\mu_2+\cdots +a_{n-1}\mu_{n-1} = b_1 \alpha_1 +b_2\alpha_2+\cdots+b_{n-1}\alpha_{n-1}.
\]
We view $v_1, \ldots, v_{n-1}$, $x_1, \ldots,
x_{n-1}$ as coordinates on $\C^{2n-2}$, and $\toric$
as the subscheme of $\C^{2n-2}$ defined by the ideal $I$. 

Given a point $x$ in $\toric$, we
write $v_i(x)$ and $x_i(x)$ for the values of the coordinate
functions $v_i$ and $x_i$ at $x$.  Similarly, if $y \in \toric_{ad}$,
$x_i(y)$ is the value of the coordinate $x_i$ at $y$. Recall that $\pi:\toric\to\toric_{ad}$ is the quotient by $Z$. We have $\pi(x) = y$ if and only if $x_i(x) = x_i(y)$ for all $i$.
If $z \in Z$, we have $\pi(z x) = \pi(x)$, so $x_i(z x) = x_i(x)$ for all $i$.

The center $Z$ acts on $x\in \C^{2n-2}$ by
\begin{eqnarray*}
&&z(v_1(x), v_2(x), \ldots, v_{n-1}(x), x_1(x), \ldots, x_{n-1}(x)) \\
&& \quad \quad\quad\quad\quad =   (z v_1(x), z^2 v_2(x), \ldots, z^{n-1} v_{n-1}(x), x_1(x), \ldots, x_{n-1}(x)).
\end{eqnarray*}
The action of $Z$ on the coordinate functions $v_i$ and $x_i$
is given by $z \cdot v_i = z^{-i} v_i$ and 
$z \cdot x_i = x_i$, respectively.   The $Z$-action on $\C^{2n-2}$ preserves the subscheme $\toric$ since the
ideal $I$ is $T$-invariant.

\begin{example} We continue Example~\ref{ex.sl(12)} for $n=12$.  Rewriting $\mu_i$ for $i=1,2,4$ as a function on $T$ and expanding in terms of $v_i$ and $x_i$, we find that 
\begin{eqnarray*}
	v_1^{12} &=& x_1^{11}x_2^{10}x_3^{9}x_4^8x_5^7x_6^6x_7^5x_8^4x_9^3x_{10}^2x_{11}\\
	v_2^{6} &=& x_1^5 x_2^4x_3^3x_4^2x_5 x_7^5 x_8^4x_9^3 x_{10}^2 x_{11}\\
	v_4^3 &=& x_1^2 x_2 x_4^2 x_5 x_7^2 x_8 x_{10}^2 x_{11}.
\end{eqnarray*}
Using similar methods, we obtain
\begin{eqnarray*}
	v_3^4&=& x_1^3 x_2^2 x_3 x_5^3 x_6^2 x_7 x_9^3 x_{10}^2 x_{11}\\
	v_5^{12}&=& x_1^7 x_2^2 x_3^9 x_4^4 x_5^{11} x_6^6 x_7 x_8^8 x_9^3 x_{10}^{10} x_{11}^5\\
	v_6^2 &=& x_1x_3x_5x_7x_9x_{11}.
\end{eqnarray*}
\end{example}

\vspace*{.05in}

The next lemma tells us when $\ga_c$ occurs in $\mu_k$; the reader can verify this result when $n=12$ using the data provided in Example~\ref{ex.sl(12)}.

\begin{Lem} \label{lem.occurs}
Let $c \in \{1, \ldots, n-1 \}$ and
let $r = n/\gcd(n,c)$.  Then $\ga_c$ occurs in
$\mu_k$ \textup{(}for $1 \leq k \leq n-1$\textup{)} if and only if $k$ is not a multiple of $r$.
\end{Lem}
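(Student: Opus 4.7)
The plan is to read the coefficient of $\alpha_c$ directly off the formula \eqref{e.glk} for $\lambda_k$ and determine when it is an integer; since $\mu_k$ is defined by taking fractional parts of the coefficients, $\alpha_c$ occurs in $\mu_k$ if and only if the coefficient of $\alpha_c$ in $\lambda_k$ is not an integer.

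Inspecting \eqref{e.glk}, the coefficient of $\alpha_c$ in $\lambda_k$ is
\[
a_{kc} \;=\; \begin{cases} c(n-k)/n & \text{if } c \leq k, \\ k(n-c)/n & \text{if } c > k. \end{cases}
\]
In the first case $a_{kc} \in \Z$ if and only if $n \mid c(n-k)$, which, since $n \mid cn$, is equivalent to $n \mid ck$. In the second case $a_{kc} \in \Z$ if and only if $n \mid k(n-c)$, which, since $n \mid kn$, is again equivalent to $n \mid ck$. Thus in both cases the condition to check is $n \mid ck$.

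Writing $d = \gcd(n,c)$ so that $r = n/d$, the divisibility $n \mid ck$ is equivalent to $r \mid k$ (this is the standard fact that $n/\gcd(n,c)$ is the least positive multiplier of $c$ that lies in $n\Z$). Consequently $a_{kc} \in \Z$ precisely when $r \mid k$, and so $\alpha_c$ occurs in $\mu_k$ precisely when $r \nmid k$, which is exactly the statement of the lemma.

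There is no real obstacle here; the argument is a direct computation, and its only subtlety is remembering to handle the two cases $c \le k$ and $c > k$ arising from the shape of formula \eqref{e.glk}, which fortunately collapse to the same divisibility condition. The same reasoning also matches the bulleted data tabulated at the end of Example~\ref{ex.sl(12)} (e.g.\ for $n=12$, $c=4$ gives $r=3$, so $\alpha_4$ is absent exactly from $\mu_3,\mu_6,\mu_9$), which serves as a useful sanity check.
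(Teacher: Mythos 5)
Your proof is correct and follows essentially the same route as the paper: read the coefficient of $\alpha_c$ off \eqref{e.glk} in the two cases $c\le k$ and $c>k$, observe both reduce to the condition $n\mid ck$, and translate that into $r\mid k$ via $r=n/\gcd(n,c)$. The paper argues the gcd step slightly differently (writing $n=rs$, $c=ms$ with $\gcd(r,m)=1$), but this is only a cosmetic variation of your "least positive multiplier" justification.
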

\begin{proof}
We prove the equivalent statement that $\ga_c$ does not occur in $\mu_k$ if and only
if $r$ divides $k$.  
The root $\ga_c$ does not occur in $\mu_k$ if and only if
the coefficient of $\ga_c$ in $\mu_k$ is an integer.
Equation~\eqref{e.glk} shows that if $c \leq k$ then this coefficient is equal to
$c(n-k)/n$; if $c >k$ then this coefficient is
equal to $[k(n-k -(c-k))]/n = k(n-c)/n$.
In either case, the coefficient is an integer if and only if $n$
divides $ck$.  Writing $s = \gcd(n,c)$, we have
$n = rs$ and $c = ms$, where $r$ and $m$ are relatively prime.
Then $n = rs$ divides $ck = kms$ if and only if $r$ divides
$km$ if and only if $r$ divides $k$.
\end{proof}

Note that if $r = n/\gcd(n,c)$, the lemma implies that $\ga_c$ occurs in each $\mu_i$. Our first step toward computing $\toric_\chi$ is to compute the stabilizer groups $Z^x$ for $x\in \toric$.

\begin{Lem} \label{lem.stab}
For $x\in \toric$, $Z^x = \cap \ker \chi_k$ where the intersection is over all $k\in \{1,\ldots, n-1\}$ such that $v_k(x) \neq 0$.
\end{Lem}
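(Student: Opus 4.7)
The plan is a direct computation using the explicit description of the $Z$-action on $\toric \subset \C^{2n-2}$ given just before the lemma statement. Since $\toric$ is $Z$-stable in $\C^{2n-2}$, the stabilizer $Z^x$ of $x \in \toric$ is the same whether we compute it in $\toric$ or in $\C^{2n-2}$, so I can work with the coordinate description. The key observation is that $Z$ acts trivially on the coordinates $x_1, \ldots, x_{n-1}$, so the condition $z \cdot x = x$ reduces entirely to requiring that $z$ fix each $v_i(x)$ for $i \in \{1, \ldots, n-1\}$. By the formula recalled just before the lemma, the $v_i$-coordinate of $z \cdot x$ is $z^i v_i(x)$, where $z$ is identified with its image in $\C^*$ under the isomorphism $Z \cong \Z_n$. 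Hence the fixed-point condition on the $i$th coordinate becomes $z^i v_i(x) = v_i(x)$, which is automatic when $v_i(x) = 0$ and equivalent to $z^i = 1$ when $v_i(x) \neq 0$.

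The final step is to translate $z^i = 1$ into the statement $z \in \ker\chi_i$, which is immediate from the definition $\chi_i(z) = z^i$. Intersecting over all indices that actually impose a constraint then gives
\[
Z^x \;=\; \bigcap_{\substack{1\le k \le n-1 \\ v_k(x)\neq 0}} \ker\chi_k,
\]
as claimed. There is no substantive obstacle in this argument; it is pure bookkeeping once the explicit weights of $Z$ on the coordinate functions $v_1, \ldots, v_{n-1}$ have been recorded. The only mild subtlety is keeping the identification $Z \cong \Z_n$ (via $\omunder^j \leftrightarrow \omega^j$) consistent when interpreting $z^i$ both as an element of the group $Z$ and as a scalar acting on $v_i$; but in both readings the condition $z^i = 1$ agrees, so no ambiguity arises.
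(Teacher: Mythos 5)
Your proof is correct and is essentially the paper's own argument: both reduce the fixed-point condition $zx = x$ to a coordinatewise comparison, note that $Z$ acts trivially on the $x_k$'s, and observe that $v_k(zx) = \chi_k(z)\,v_k(x)$, so the constraint is $\chi_k(z)=1$ exactly for those $k$ with $v_k(x)\neq 0$. The remarks about $Z$-stability of $\toric$ in $\C^{2n-2}$ and the identification $Z\cong\Z_n$ are harmless bookkeeping already implicit in the paper's setup.
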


\begin{proof}
Given $x, x' \in \toric$, we have $x = x'$ if and only if $x_k(x) = x_k(x')$  and $v_k(x) = v_k(x')$ for all $k$.  
Hence, for $z \in Z$, we have $z \in Z^x$ if and only if $x_k(zx) = x_k(x)$ and $v_k(z x) = v_k(x)$ for all $k$.
The first equality always holds, by the discussion above.  We have $v_k(zx) = \chi_k(z) v_k(x)$, and this
equals $v_k(x)$ if and only if $\chi_k(z) = 1$ whenever $v_k(x) \neq 0$, which is equivalent to the assertion
of the lemma.
\end{proof}

Motivated by the preceding lemma, we now examine the condition $v_k(x) = 0$.

\begin{Lem} \label{lem.vk=0}
Suppose $x \in \toric$.  We have $v_k(x) = 0$ if and only if $x_i(x) = 0$ for some $i$ such that $\ga_i$ occurs in
$\mu_k$.
\end{Lem}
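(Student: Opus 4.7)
The plan is to exploit the specific toric relations in the ideal $I$ defining $\toric$ as a closed subscheme of $\C^{2n-2}$. For each $k \in \{1,\ldots,n-1\}$, I will let $d_k$ denote the smallest positive integer such that $d_k\mu_k$ lies in the root lattice, and write
\[
d_k \mu_k = \sum_{i=1}^{n-1} c_{ki}\, \alpha_i, \qquad c_{ki} := d_k\, a_{ki} \in \Z_{\geq 0}.
\]
Since $0 \le a_{ki} < 1$, these $c_{ki}$ are nonnegative integers, and crucially $c_{ki} > 0$ if and only if $a_{ki} > 0$, i.e., if and only if $\alpha_i$ occurs in $\mu_k$ in the sense defined in the paper.

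By the description of $I$ given just before Lemma~\ref{lem.occurs}, the element
\[
v_k^{d_k} - \prod_{i=1}^{n-1} x_i^{c_{ki}}
\]
lies in $I$, since it is of the permitted form with $a_k = d_k$, $a_j=0$ for $j \ne k$, and $b_i = c_{ki}$. Consequently, for every $x \in \toric$ we have the identity of scalars
\[
v_k(x)^{d_k} \;=\; \prod_{i=1}^{n-1} x_i(x)^{c_{ki}}.
\]
Both directions of the lemma now follow at once from this equation. If $x_i(x) = 0$ for some $i$ with $\alpha_i$ occurring in $\mu_k$, then $c_{ki} > 0$, so the right-hand side vanishes and hence $v_k(x) = 0$. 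Conversely, if $v_k(x) = 0$, then the product on the right vanishes, so at least one factor $x_i(x)^{c_{ki}}$ is zero with $c_{ki} > 0$, which means $x_i(x) = 0$ for some $i$ such that $\alpha_i$ occurs in $\mu_k$.

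There is no real obstacle; the only point requiring any care is to identify the correct element of $I$ to use, namely the one that expresses a pure power of $v_k$ as a monomial in the $x_i$'s, and to observe that the nonvanishing of an exponent $c_{ki}$ is precisely the condition that $\alpha_i$ occurs in $\mu_k$. Once this relation is in hand, the lemma reduces to the elementary fact that a product of scalars vanishes if and only if one of the factors with positive exponent vanishes.
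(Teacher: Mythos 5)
Your proof is correct and follows essentially the same route as the paper's: both establish the relation $v_k^{d}=\prod_i x_i^{d a_{ki}}$ in the coordinate ring (the paper with any $d$ clearing denominators, you with the minimal $d_k$) and then evaluate at $x$, using that the exponent of $x_i$ is positive exactly when $\ga_i$ occurs in $\mu_k$. The only difference is cosmetic, so nothing further is needed.
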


\begin{proof}
We have $\mu_k = \sum_{i = 1}^{n-1} a_{ki} \ga_i$ for $a_{ki}\in \mathbb{Q}$ such that $0\leq a_{ki}<1$.
Choose a positive integer $d$ such that  $d a_{ki} \in \Z$ for all $i$.
Then $d \mu_k = \sum_{i = 1}^{n-1} d a_{ki} \ga_i$, so in the ring
$A$, we have 
\begin{equation} \label{e.vk}
v_k^d = \prod_{i=1}^{n-1} x_i^{d a_{ki}}.
\end{equation}
The root $\ga_i$ occurs in $\mu_k$ if and only if $a_{ki} > 0$.
So evaluating both sides of \eqref{e.vk} at $x$, we see that if 
$\ga_i$ occurs in $\mu_k$ and $x_i(x) = 0$, then
$v_k(x)^d = 0$, and hence $v_k(x) = 0$.  On the other hand,
if $x_i(x) \neq 0$ for all $i$ such that $\ga_i$ occurs in $\mu_k$,
then $(v_k(x))^d \neq 0$, so $v_k(x) \neq 0$.
\end{proof}

The next proposition describes the stabilizer groups $Z^x$ for $x \in \toric$.

\begin{Prop} \label{prop.stab1}
Let $x \in \toric$, and $\{ c_1, c_2, \ldots, c_{\ell} \}$ be the set of integers $k$ such that $x_{k}(x) = 0$.  
Let $r_i = n/\mbox{gcd}(n,c_i)$ and $r = \mbox{lcm}(r_1, r_2, \ldots, r_\ell)$.  \textup{(}Note that $r$ divides $n$.\textup{)}  Then~$Z^x = \ker \chi_r$.
\end{Prop}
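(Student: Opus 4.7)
The plan is to chain together Lemmas \ref{lem.stab}, \ref{lem.occurs}, and \ref{lem.vk=0} so that computing $Z^x$ reduces to computing an intersection of kernels $\ker\chi_k$ indexed by the multiples of $r$. By Lemma \ref{lem.stab},
\[
Z^x = \bigcap_{\substack{1 \le k \le n-1 \\ v_k(x) \neq 0}} \ker \chi_k,
\]
so the first task is to describe the set of indices $k$ with $v_k(x) \neq 0$ in terms of the data $\{c_1,\ldots,c_\ell\}$.

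By Lemma \ref{lem.vk=0}, $v_k(x) \neq 0$ if and only if $x_i(x) \neq 0$ for every $i$ with $\ga_i$ occurring in $\mu_k$; equivalently, no $\ga_{c_j}$ ($j=1,\dots,\ell$) occurs in $\mu_k$. By Lemma \ref{lem.occurs}, $\ga_{c_j}$ fails to occur in $\mu_k$ precisely when $r_j = n/\gcd(n,c_j)$ divides $k$. Therefore $v_k(x) \neq 0$ if and only if $r_j \mid k$ for every $j$, which by the definition of least common multiple is equivalent to $r \mid k$.

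It remains to verify that
\[
\bigcap_{\substack{1 \le k \le n-1 \\ r\, \mid\, k}} \ker \chi_k \;=\; \ker \chi_r.
\]
For the forward inclusion, if $r \le n-1$ then $k=r$ lies in the indexing set, so the intersection is contained in $\ker\chi_r$. For the reverse inclusion, if $z \in \ker\chi_r$ and $k = rm$ is any multiple of $r$, then $\chi_k(z) = z^{rm} = (z^r)^m = 1$, hence $z \in \ker\chi_k$. To finish I will check the boundary cases. If $\ell = 0$, then $r = 1$ (the empty lcm), every $v_k(x) \neq 0$, and $Z^x = \bigcap_k \ker\chi_k = \{1\} = \ker\chi_1$. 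If $r = n$ (which happens precisely when some $c_j$ is coprime to $n$), the indexing set is empty, so the intersection is all of $Z$; this matches $\ker\chi_n = \ker\chi_0 = Z$, and is consistent with the fact that, in this regime, $Z$ fixes $x$ coordinate-wise since all relevant $v_k(x)$ vanish and the $x_i$ coordinates are $Z$-invariant. The argument is essentially just assembling the preceding lemmas, so the only real care required is in tracking these degenerate cases.
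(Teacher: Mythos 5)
Your argument is correct and takes essentially the same route as the paper: both chain Lemma~\ref{lem.vk=0} and Lemma~\ref{lem.occurs} to show $v_k(x)\neq 0$ exactly when $r\mid k$, then apply Lemma~\ref{lem.stab}; you merely spell out the final identity $\bigcap_{r\mid k}\ker\chi_k=\ker\chi_r$ and the degenerate cases ($\ell=0$, $r=n$), which the paper leaves implicit. One harmless inaccuracy: $r=n$ does not occur \emph{precisely} when some $c_j$ is coprime to $n$ (for $n=6$ and $c$-set $\{2,3\}$ one gets $r=6$ with no $c_j$ coprime to $6$), but your treatment of that case depends only on the value of $r$, not on this characterization, so the proof stands.
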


\begin{proof}
By Lemma~\ref{lem.vk=0}, $v_k(x) = 0$ if and only if $x_i(x) = 0$ for some $i$ such that $\ga_i$ occurs in $\mu_k$. Equivalently, $v_k(x)=0$ if and only if $\alpha_{c_i}$ occurs in $\mu_k$ for some $i$.  Finally, by Lemma~\ref{lem.occurs} $\alpha_{c_i}$ occurs in $\mu_k$ if and only if $k$ is not a multiple of $r_i$. We therefore conclude that $v_k(x)\neq 0$ if and only if $k$ is a multiple of $r$. The desired statement now follows from Lemma~\ref{lem.stab}.
\end{proof}

Let $\chi:  Z \to \C^*$ be a character of $Z$. Recall that 
$\toric_{\chi}=\{ x \in \toric \mid Z^x \subset \ker \chi \}$.
If $\chi$ has order $n$ then $\ker \chi = \{ 1 \}$, so
$\toric_{\chi} = \toric_{reg}$.  The main result of this section is the next proposition, which
describes $\toric_{\chi}$.

\begin{Prop} \label{prop.wchi}
Suppose $\chi \in \widehat{Z}$ has order $d$ and let $x \in \toric$.  Then $x \in \toric_{\chi}$ if and only if
for each $k$ in $\{1, \ldots, n-1 \}$, 
$x_k(x) \neq 0$ if $d$ does not divide $k$.
\end{Prop}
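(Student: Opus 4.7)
The plan is to combine the explicit descriptions provided by Lemma~\ref{lem.chir}(2) and Proposition~\ref{prop.stab1} to reduce the set-theoretic condition $Z^x \subset \ker\chi$ defining $\toric_\chi$ to a simple divisibility condition on the indices of the vanishing coordinates of $x$.

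First, I would use Lemma~\ref{lem.chir}(2) to identify $\ker\chi = \ker\chi_{n/d}$, which is the cyclic subgroup $\langle \omunder^d \rangle$ of $Z$. Next, I would apply Proposition~\ref{prop.stab1} to identify $Z^x = \ker\chi_r$, where $r = \mbox{lcm}(r_1,\ldots,r_\ell)$ and $r_i = n/\gcd(n,c_i)$, with $\{c_1,\ldots,c_\ell\}$ the set of indices for which $x_k(x)=0$. The fact that $r$ divides $n$, which is built into Proposition~\ref{prop.stab1}, then lets me write $\ker\chi_r = \langle \omunder^{n/r}\rangle$ via Lemma~\ref{lem.chir}(1).

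With these two cyclic subgroups of $Z$ made explicit, the containment $\ker\chi_r \subset \ker\chi_{n/d}$ reduces to checking that the generator $\omunder^{n/r}$ lies in $\ker\chi_{n/d}$, which amounts to $d \mid n/r$, or equivalently $r \mid n/d$. Since $r$ is the least common multiple of the $r_i$, this is equivalent to $r_i \mid n/d$ for every $i$. Unpacking $r_i = n/\gcd(n,c_i)$, this becomes $d \mid \gcd(n,c_i)$, and since $d$ divides $n$ (as the order of a character of $Z \cong \Z_n$), it simplifies to $d \mid c_i$. This says precisely that every index $c_i$ where $x_{c_i}(x)=0$ is divisible by $d$; the contrapositive is that $x_k(x) \neq 0$ whenever $d \nmid k$, which is the statement of the proposition.

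Since each step is an elementary number-theoretic translation, I do not anticipate a substantive obstacle. The only subtlety requiring mild care is bookkeeping around the generator of $\ker\chi_c$, which is $\omunder^{n/\gcd(n,c)}$ rather than $\omunder^c$; all of the gcd/lcm manipulations in the argument trace back to this single fact from Lemma~\ref{lem.chir}(1).
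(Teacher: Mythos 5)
Your proposal is correct and follows essentially the same route as the paper's proof: both invoke Proposition~\ref{prop.stab1} to get $Z^x=\ker\chi_r$, Lemma~\ref{lem.chir} to get $\ker\chi=\ker\chi_{n/d}$, and then reduce the containment to $r\mid n/d$, i.e.\ $r_i\mid n/d$ for all $i$, i.e.\ $d\mid c_i$ for all $i$. Your explicit generator bookkeeping for the containment of cyclic subgroups is just a slightly more detailed rendering of a step the paper states without comment.
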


\begin{proof}
Let $x \in \toric$, and
let $c_1, \ldots, c_{\ell}$ be the integers $k$ with $x_k (x) = 0$.  The proposition is equivalent to the assertion
that $x \in \toric_{\chi}$ if and only if  $d$ divides $c_i$ for all $i$.  

Proposition~\ref{prop.stab1} says that $Z^x = \ker \chi_r$, where $r = \mbox{lcm}(r_1, r_2, \ldots, r_\ell)$ for $r_i = n/\mbox{gcd}(n,c_i)$.  Applying this fact together with Lemma~\ref{lem.chir} we now have:
\begin{eqnarray*}
x\in \toric_\chi \Leftrightarrow Z^x\subseteq \ker\chi = \ker \chi_{n/d} \Leftrightarrow \textup{$r$ divides $\frac{n}{d}$} \Leftrightarrow \textup{$r_i$ divides $\frac{n}{d}$ for all $i$.}
\end{eqnarray*}
Finally, we have that $r_i$ divides $\frac{n}{d}$ for all $i$ if and only if $d$ divides $\gcd(n,c_i)$ for all $i$.  The last condition is equivalent to requiring that $d$ divide $c_i$ for all $i$, as desired.
\end{proof}

\begin{Cor} \label{cor.wchi}
Suppose $x \in \toric$.  Then $x \in \toric_{reg}$ if and only if $x_k(x) \neq 0$ for each $k\in\{1, \ldots, n-1 \}$. 
\end{Cor}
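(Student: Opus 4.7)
The plan is to deduce this corollary as the special case of Proposition \ref{prop.wchi} when $\chi$ has maximal possible order $d = n$. Since $Z \cong \Z_n$ is cyclic of order $n$, the character group $\widehat{Z}$ contains characters of order $n$; for instance, $\chi_1$ satisfies $\chi_1^k(\omunder) = \omega^k$, which equals $1$ only when $n \mid k$, so $\chi_1$ has order $n$.

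For any $\chi \in \widehat{Z}$ of order $n$, Lemma \ref{lem.chir}(2) gives $\ker \chi = \{1\}$, and hence by definition
\[
\toric_{\chi} = \{x \in \toric \mid Z^x \subseteq \{1\}\} = \{x \in \toric \mid Z^x = \{1\}\} = \toric_{reg}.
\]
(This equality with $\toric_{reg}$ is also noted in the paragraph immediately preceding Proposition \ref{prop.wchi}.)

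Next I would invoke Proposition \ref{prop.wchi} with $d = n$: this says $x \in \toric_{\chi}$ if and only if $x_k(x) \neq 0$ for every $k \in \{1, \ldots, n-1\}$ such that $n$ does not divide $k$. Since $k$ ranges over $\{1, \ldots, n-1\}$, the divisibility condition $n \mid k$ never holds, so the stated condition becomes simply $x_k(x) \neq 0$ for every $k \in \{1, \ldots, n-1\}$. Combined with the identification $\toric_{\chi} = \toric_{reg}$, this is exactly the conclusion of the corollary.

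There is no real obstacle here — the proof is a one-line specialization of Proposition \ref{prop.wchi} — so the only thing to be careful about is verifying that a character of order exactly $n$ actually exists in $\widehat{Z}$, which follows immediately from $Z \cong \Z_n$.
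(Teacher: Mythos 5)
Your proof is correct and matches the paper's argument: both deduce the corollary by specializing Proposition \ref{prop.wchi} to a character of order $n$, noting that $\ker\chi=\{1\}$ so $\toric_\chi=\toric_{reg}$ and that the divisibility condition is vacuous for $k\in\{1,\ldots,n-1\}$. The extra check that a character of order $n$ exists is a harmless addition.
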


\begin{proof}
By definition, $x \in \toric_{reg}$ if and only if $Z^x = \{ 1 \}$.  Since $\{ 1 \} = \ker \chi$ when $\chi$ has
order $n$, Proposition \ref{prop.wchi} implies that $\toric_{reg}$ consists of the set of $x \in \toric$ such that 
$x_k(x) \neq 0$ for all $k\in\{1, \ldots, n-1 \}$.
\end{proof}

\begin{Cor} \label{cor.wadchi}
Suppose $\chi \in \widehat{Z}$ has order $d$.  Then $y \in \toric_{ad, \chi}$ if and only if 
for each $k$ in $\{1, \ldots, n-1 \}$, 
$x_k(y) \neq 0$ if $d$ does not divide $k$.
In particular,
$\toric_{ad, reg}$ consists of the set of $y \in \toric_{ad}$ such that
 $x_k(y) \neq 0$ for all $k\in\{1, \ldots, n-1 \}$.
\end{Cor}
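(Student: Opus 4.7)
The plan is to deduce Corollary \ref{cor.wadchi} as a direct consequence of Proposition \ref{prop.wchi} by exploiting the fact that the coordinate functions $x_k$ factor through the quotient map $\pi: \toric \to \toric_{ad}$.

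First I would observe that for any $x \in \toric$ and $y = \pi(x) \in \toric_{ad}$, we have $x_k(x) = x_k(y)$ for every $k$. This is immediate because the $x_k$ were defined via characters $e^{-\ga_k}$ lying in the root lattice, hence descend to functions on $T_{ad}$, and the discussion just before Lemma \ref{lem.chir} explicitly notes that $x_k(zx) = x_k(x)$ for all $z \in Z$.

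Next I would recall that by definition $\toric_{ad, \chi} = \pi(\toric_\chi)$, and since $\toric_\chi$ is $Z$-stable (as noted in Section \ref{s.local}, since $Z$ is abelian), we have the equivalence
\[
y \in \toric_{ad, \chi} \iff \pi^{-1}(y) \subseteq \toric_\chi \iff \text{some (equivalently every) } x \in \pi^{-1}(y) \text{ lies in } \toric_\chi.
\]
Given any $y \in \toric_{ad}$, pick any lift $x \in \pi^{-1}(y)$ (such a lift exists because $\pi$ is surjective, being the quotient map by the finite group $Z$). Proposition \ref{prop.wchi} says that $x \in \toric_\chi$ if and only if $x_k(x) \neq 0$ whenever $d$ does not divide $k$. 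Combining this with $x_k(x) = x_k(y)$ gives the first claim.

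For the final statement about $\toric_{ad, reg}$, I would simply take $\chi$ to be a character of $Z$ of order $d = n$ (which exists since $Z \cong \Z_n$), in which case $\ker \chi = \{1\}$ so $\toric_{ad, \chi} = \toric_{ad, reg}$. Since no $k \in \{1, \ldots, n-1\}$ is divisible by $n$, the condition becomes $x_k(y) \neq 0$ for all such $k$. There is no real obstacle here: the result is essentially a bookkeeping corollary of Proposition \ref{prop.wchi} combined with the elementary observation that the $x_k$ are pulled back from $\toric_{ad}$.
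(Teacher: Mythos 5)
Your proposal is correct and follows essentially the same route as the paper: note that the coordinates $x_k$ descend along $\pi$ (so $x_k(x)=x_k(y)$ for any lift $x$ of $y$), use $\toric_{ad,\chi}=\pi(\toric_\chi)$, and apply Proposition~\ref{prop.wchi}; your handling of the ``in particular'' statement by taking $\chi$ of order $n$ is exactly how the paper's Corollary~\ref{cor.wchi} is proved, which the paper simply cites here. The extra remark that $\toric_\chi$ is $Z$-stable, so membership of one lift is equivalent to membership of all lifts, is a nice bit of added care but not a substantive departure.
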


\begin{proof}
By definition, $y$ is in $\toric_{ad, \chi}$ if $y = \pi(x)$ for $x \in \toric_{\chi}$.  In this case, $x_k(y) = x_k(x)$, so the
result follows from Proposition \ref{prop.wchi} and Corollary \ref{cor.wchi}.
\end{proof}

%%%%%%%%%%%%%%
\subsection{The subvarieties $\widetilde{\cm}_{\chi}$ and $\widetilde{\cn}_{\chi}$} \label{s.nchi}
Recall that we have $\eta: \widetilde{\fu} \to \fu$, defined by
$$
\eta = \pi \times 1: \widetilde{\fu} = \toric \times_{\toric_{ad}} \fu \to \fu =  \toric_{ad} \times_{\toric_{ad}} \fu.
$$
The $Z$-action on $\widetilde{\fu}$ comes from the action on the first factor, and we have
$\widetilde{\fu}/Z = \fu$.  As varieties,
$\widetilde{\fu} = \toric \times \fg_{\geq 4}$ and $\fu = \toric_{ad} \times \fg_{\geq 4}$.  Both $\widetilde{\fu}$ and
$\fu$ have $B$-actions and the map $\map$ is $B$-equivariant.   From this we obtain
$$
\widetilde{\eta}: \widetilde{\cm} = G \times^B \widetilde{\fu} \to \widetilde{\cn} = G \times^B \fu.
$$
The $Z$-action on $\widetilde{\cm}$ is the restriction of the left $G$-action to the subgroup $Z$; note
that for $z \in Z$, $g \in G$, and $x \in \toric$, we have
$$
z [g,x] = [zg, x] = [gz, x] = [g,zx],
$$
where the second equality is because $z$ is central, and the
third holds since $Z \subset B$.  The equality $\widetilde{\fu}/Z = \fu$ implies that $\widetilde{\cm}/Z = \widetilde{\cn}$ (see
\cite{Gra:19}).

\begin{Prop} \label{prop.nchi} Let $\chi \in \widehat{Z}$.
\begin{enumerate}
\item We have $ \widetilde{\fu}_{\chi} = \toric_{\chi} \times_{\toric_{ad}} \fu $ and $ \fu_{\chi} = \toric_{ad,\chi} \times_{\toric_{ad}} \fu$.  As varieties,
$\widetilde{\fu}_{\chi} = \toric_{\chi} \times \fg_{\ge 4}$ and $\fu_{\chi} = \toric_{ad, \chi} \times \fg_{\ge 4}$.

\item We have $\widetilde{\cm}_{\chi} = G \times^B  \widetilde{\fu}_{\chi}$ and  $\widetilde{\cn}_{\chi} = G \times^B \fu_{\chi}$.
\end{enumerate}
\end{Prop}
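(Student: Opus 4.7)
The plan is to reduce both parts to the observation that the $Z$-action on $\widetilde{\fu} = \toric \times_{\toric_{ad}} \fu$ comes entirely from the $\toric$-factor, together with careful bookkeeping of $B$-equivariance through the mixed-space constructions. The key starting point is that $Z$ is central in $G$, so its adjoint action on $\fg$ (and hence on $\fu$) is trivial; thus the $Z$-action on $\widetilde{\fu}$ moves only the first coordinate.

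For part (1), I would first establish $\widetilde{\fu}_\chi = \toric_\chi \times_{\toric_{ad}} \fu$ by a direct stabilizer computation: for $(v, y) \in \widetilde{\fu}$ one has $z \cdot (v, y) = (zv, y)$, whence $Z^{(v, y)} = Z^v$; by definition $(v, y) \in \widetilde{\fu}_\chi$ iff $Z^v \subset \ker \chi$ iff $v \in \toric_\chi$. I would then compute $\fu_\chi = \eta(\widetilde{\fu}_\chi)$ directly, using $\pi(\toric_\chi) = \toric_{ad, \chi}$: an element $y \in \fu$ lies in $\eta(\widetilde{\fu}_\chi)$ iff some $v \in \toric_\chi$ satisfies $\pi(v) = p(y)$, which is equivalent to $p(y) \in \toric_{ad, \chi}$. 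This yields $\fu_\chi = \toric_{ad, \chi} \times_{\toric_{ad}} \fu$. For the product descriptions, I would use that in the principal grading the odd $\ad s$-eigenspaces vanish, so $\fu = \fg_2 \oplus \fg_{\ge 4}$ and $p \colon \fu \to \fu / [\fu, \fu] = \fg_2 = \toric_{ad}$ is the projection along $\fg_{\ge 4}$; the fiber product $\toric \times_{\toric_{ad}} \fu$ then identifies canonically with $\toric \times \fg_{\ge 4}$, and similarly after restricting to $\toric_\chi$ or $\toric_{ad, \chi}$.

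For part (2), I would first compute $Z$-stabilizers on $\widetilde{\cm} = G \times^B \widetilde{\fu}$. Using centrality of $Z$ and $Z \subset B$, one has $z \cdot [g, x] = [zg, x] = [gz \cdot z^{-1}, z x] = [g, zx]$ (apply the $B$-equivalence $[g', x'] = [g' b^{-1}, b x']$ with $b = z^{-1}$ and use $zg z^{-1} = g$). The equality $[g, zx] = [g, x]$ then forces $b = 1$ in the defining $B$-equivalence, so $zx = x$; hence $Z^{[g, x]} = Z^x$, giving $\widetilde{\cm}_\chi = G \times^B \widetilde{\fu}_\chi$. For $\widetilde{\cn}_\chi = \widetilde{\eta}(\widetilde{\cm}_\chi)$, since $\widetilde{\eta}[g, (v, y)] = [g, y]$, the image equals $G \times^B \fu_\chi$ provided $\fu_\chi$ is $B$-stable. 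Because $p$ is $B$-equivariant and $\fu_\chi = p^{-1}(\toric_{ad, \chi})$, it suffices to check that $\toric_{ad, \chi}$ is $B$-stable; and because the $B$-action on $\toric_{ad}$ factors through $T$, which scales each coordinate $x_k$ by $\alpha_k$, the vanishing/nonvanishing conditions characterizing $\toric_{ad, \chi}$ in Corollary~\ref{cor.wadchi} are preserved.

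The main obstacle is just keeping the bookkeeping straight across the three layers of quotients and fiber products, and being careful with the direction of the $B$-action at each stage; the substantive ingredient, the intrinsic description of $\toric_{ad, \chi}$, is already provided by Corollary~\ref{cor.wadchi}.
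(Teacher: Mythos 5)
Your proposal is correct and takes essentially the same route as the paper's proof: observe that the $Z$-action only moves the $\toric$-coordinate, so $Z^{(v,y)}=Z^{v}$ and $Z^{[g,x]}=Z^{x}$, and then obtain $\fu_{\chi}$ and $\widetilde{\cn}_{\chi}$ as images under the quotient maps $\map$ and $\widetilde{\map}$, using $\fu=\toric_{ad}\times\fg_{\ge 4}$ for the product descriptions. The only difference is that you make explicit two points the paper leaves implicit — the verification that $[g,zx]=[g,x]$ forces $zx=x$, and the $B$-stability of $\fu_{\chi}$ (via Corollary~\ref{cor.wadchi}) needed for $G\times^{B}\fu_{\chi}$ to make sense — which is harmless extra care, not a different argument.
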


\begin{proof}
(1) Suppose  $\widetilde{u} = (x, u) \in  \widetilde{\fu}$, where $x \in \toric$ and $u \in \fu$.   Then
$Z^{(x,u)} = Z^x$, so $\widetilde{u} \in \widetilde{\fu}_{\chi}$ if and only if $x \in \toric_{\chi}$.  This proves that $ \widetilde{\fu}_{\chi} = \toric_{\chi} \times_{\toric_{ad}} \fu $.  Since $\fu_{\chi}$ is the image of $ \widetilde{\fu}_{\chi}$ under the map 
$\eta$, it follows that $ \fu_{\chi} = \toric_{ad,\chi} \times_{\toric_{ad}} \fu$.  The assertions about the structure
of $\widetilde{\fu}_{\chi}$ and $\fu_{\chi}$ as varieties follow from writing $\fu = \toric_{ad} \times \fg_{\ge 4}$.

(2) If $[g, \widetilde{u}] \in \widetilde{\cm}$, then $Z^{[g, \widetilde{u}]} = Z^{\widetilde{u}}$, so  
$[g, \widetilde{u}] \in \widetilde{\cm}_{\chi}$ if and only if $u \in \widetilde{\fu}_{\chi}$.  Hence $\widetilde{\cm}_{\chi} = G \times^B  \widetilde{\fu}_{\chi}$.  The assertion $\widetilde{\cn}_{\chi} = G \times^B \fu_{\chi}$ follows because
$\widetilde{\cn}_{\chi}$ is the image of $\widetilde{\cm}_{\chi}$ under $\widetilde{\eta}$.
\end{proof}

\begin{example}\label{ex: n-chi} Let $n=6$ and $\chi=\chi_3\in \widehat{Z}$; note that $\chi$ is a character
of order $d=2$.  In this example, we consider the intersections $\tilde{\cn}_\chi \cap \mu^{-1}(\nilp)$ for various nilpotent elements~$\nilp\in \cn$.  Here $\mu^{-1}(\nilp)$ is the Springer fiber over $\nilp$; recall from \eqref{e.SpringerBorbit} that
$ \mu^{-1}(\nilp) = \{ [g, g^{-1} \nilp] \mid g^{-1}\cdot \nilp \in \fu\}$.  The previous proposition shows that $\tilde{\cn}_\chi$ is determined by $\fu_{\chi}$: we have
$ [g, g^{-1} \nilp] \in \cn_{\chi}$ if and only if $g^{-1} \nilp \in \fu_{\chi}$.  We have
$\fu_{\chi} = p^{-1}(\toric_{ad, \chi})$, where $p: \fu \to \toric_{ad}=\fu/[\fu,\fu]$ is the natural projection. 
Write
\begin{eqnarray}\label{eqn: projection formula}
p(g^{-1}\cdot \nilp) = c_1E_{\alpha_1} + c_{2}E_{\alpha_2}+ c_3E_{\alpha_3} + c_4 E_{\alpha_4}+ c_5 E_{\alpha_5}.
\end{eqnarray}
We deduce from Proposition~\ref{prop.wchi} that
$(g, g^{-1}\cdot\nilp)\in \tilde{\cn}_\chi$ if and only if $c_k\neq 0$ for all odd~$k$.  

It is well-known that the irreducible components of $\mu^{-1}(\nilp)$ for $\nilp\in \co_\lambda$ are indexed by standard tableaux of shape $\lambda$. Indeed, if we construct these components as Steinberg does in~\cite[\S 2]{Steinberg}, each standard tableau $S\in \mathrm{ST}(\lambda)$ corresponds to an open subset $C_S\subset \mu^{-1}(\nilp)$ of maximal dimension.  The decomposition of $\mu^{-1}(\nilp)$ into irreducible components is  $\mu^{-1}(\nilp) = \cup_{S\in \mathrm{ST}(\lambda)}\overline{C}_S$. In this
example, it is straightforward to show that $C_S \subset \tilde{\cn}_\chi$  whenever $S$ is one of the following standard tableaux.
\begin{eqnarray}\label{eqn: standard tab n=6}
\ytableausetup{centertableaux}
\begin{ytableau}
1 & 2 \\
3 & 4 \\
5 & 6 \\
\end{ytableau}
\quad\quad
\begin{ytableau}
1 & 2 & 3 & 4 \\
5 & 6 \\
\end{ytableau}
\quad\quad
\begin{ytableau}
1 & 2 & 5 & 6 \\
3 & 4 \\
\end{ytableau}
\quad\quad
\begin{ytableau}
1 & 2 & 3 & 4 &  5 & 6 \\
\end{ytableau}
\end{eqnarray}
This computation is discussed in greater detail for the first standard tableaux above in Example~\ref{ex.mainthm}.
\end{example}

\vspace*{.05in}

Corollary \ref{cor.wadchi} implies that $\fu_{reg}$ consists of the elements of $\fu$
such that the coefficient of each root vector corresponding
to a simple root is nonzero.  This set is
exactly the set of principal nilpotent elements in $\fu$.
Therefore, $\tilde{\cn}_{reg}$ is the set of $[g,\nilp]\in G\times^B \fu$ where
$\nilp$ is principal nilpotent in $\fu$.  The map $\mu: [g,\nilp] \to g\cdot \nilp$
identifies $\tilde{\cn}_{reg}$ with the principal nilpotent orbit
$\co^{pr}$ in $\cn$.  As discussed in Section~\ref{sec.local-system-nilp-orbit}, each character $\chi$ of $Z$ induces
a local system $\cl_{\chi}$ on $\co^{pr}$ and consequently on $\tilde{\cn}_{reg}$.  We are now in the setting of Section~\ref{s.local}, with $X=\tilde{\cm}$ and $Y=\tilde{\cn}$.  Applying Proposition~\ref{prop.pushforward} to $\tilde{\map}: \tilde{\cm} \to \tilde{\cn}$ yields the following result.

\begin{Prop} \label{prop.pushforwardcn}
Let $\tilde{\map}: \tilde{\cm} \to \tilde{\cn}$ denote the map induced by $\map: \tilde{\fu} \to \fu$.  Then
$$
\tilde{\map}_*\shq_{\tilde{\cm}}[\dim \cn] = \bigoplus_{\chi \in \widehat{Z}} IC(\tilde{\cn}, \cl_{\chi}).
$$
Moreover, if $i$ is the inclusion of
$\tilde{\cn} \setminus \tilde{\cn}_{\chi}$ into 
$\tilde{\cn}$, then $i^* IC(\tilde{\cn}, \cl_{\chi})=0$. 
\end{Prop}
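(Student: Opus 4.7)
The plan is to apply Proposition~\ref{prop.pushforward} directly to $\tilde{\map}: \tilde{\cm} \to \tilde{\cn}$, viewing this as the quotient by the $Z$-action described in Section~\ref{s.nchi}, and then identify the resulting local systems with the $\cl_\chi$ on $\tilde{\cn}_{reg} \cong \co^{pr}$. The bulk of the work has already been done in Sections~\ref{s.local}--\ref{s.nchi}; what remains is verification of hypotheses and a local-system identification.

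First I would verify the three hypotheses of Proposition~\ref{prop.pushforward}. The equality $\tilde{\cm}/Z = \tilde{\cn}$ is established in the discussion preceding Proposition~\ref{prop.nchi}. Rational smoothness of $\tilde{\cm}$ is noted in Section~\ref{sec.decomp}. For the dimension, Proposition~\ref{prop.nchi}(1) and the splittings $\tilde{\fu} \cong \toric \times \fg_{\geq 4}$ and $\fu \cong \toric_{ad} \times \fg_{\geq 4}$ give $\dim \tilde{\fu} = \dim \fu$ since $\dim \toric = \dim \toric_{ad}$. Hence $\dim \tilde{\cm} = \dim \tilde{\cn} = \dim \cn$, the last equality because $\mu$ is a resolution. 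Finally, by definition of $\tilde{\cm}_{reg}$ the group $Z$ acts freely there, and via the identification $\tilde{\cn}_{reg} \cong \co^{pr}$ (established just before the statement of the proposition), the quotient $\tilde{\cm}_{reg}/Z$ is exactly $\tilde{\cn}_{reg}$.

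Second I would identify the two candidate local systems on $\tilde{\cn}_{reg}$. The general construction of Section~\ref{s.local} yields $\cl_\chi$ as the local system of locally constant sections of $\tilde{\cm}_{reg} \times^Z \C_\chi \to \tilde{\cn}_{reg}$. On the other hand, Section~\ref{sec.local-system-nilp-orbit} defines $\cl_\chi$ on $\co^{pr}$ via $G \times^{G^\nilp} \C_\chi$, using the isomorphism $G^\nilp/G^\nilp_0 \cong Z$. Under the identification $\tilde{\cn}_{reg} \cong \co^{pr}$, these two constructions must be shown to coincide. This is a direct check via the mixed-space formalism and the fact that, over $\co^{pr}$, the map $\tilde{\cm}_{reg} \to \tilde{\cn}_{reg}$ realizes $\tilde{\cm}_{reg}$ as the principal $Z$-bundle associated to the $G^\nilp/G^\nilp_0$-covering of $\co^{pr}$.

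With these identifications in place, Proposition~\ref{prop.pushforward}(1) gives
\[
\tilde{\map}_*\shq_{\tilde{\cm}}[\dim \cn] = \bigoplus_{\chi \in \widehat{Z}} IC(\tilde{\cn}, \cl_{\chi}),
\]
and Proposition~\ref{prop.pushforward}(3) is precisely the statement $i^* IC(\tilde{\cn}, \cl_\chi) = 0$ on $\tilde{\cn} \setminus \tilde{\cn}_\chi$. The main obstacle, modest as it is, is the local system identification in the second step: one must be careful in matching the mixed space $\tilde{\cm}_{reg} \times^Z \C_\chi$ used by the general machinery with the description of $\cl_\chi$ on $\co^{pr}$ as a $G$-equivariant local system pulled back along $\tilde{\cn}_{reg} \to \co^{pr}$. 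Once this bookkeeping is done, the proposition follows with no further work.
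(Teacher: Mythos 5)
Your proposal is correct and follows essentially the same route as the paper: the paper likewise obtains this proposition by placing $X=\tilde{\cm}$, $Y=\tilde{\cn}$ in the setting of Section~\ref{s.local} (using $\tilde{\cm}/Z=\tilde{\cn}$, rational smoothness of $\tilde{\cm}$, and the identification of $\tilde{\cn}_{reg}$ with $\co^{pr}$ carrying the local systems $\cl_\chi$) and then applying Proposition~\ref{prop.pushforward}. Your explicit matching of the quotient-construction local system $\tilde{\cm}_{reg}\times^Z\C_\chi$ with the $G$-equivariant $\cl_\chi$ on $\co^{pr}$ is a point the paper treats as immediate, so you are simply making the same argument with slightly more bookkeeping.
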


%%%%%%%%%%%%%%%%%%%

\section{$IC$ sheaves on $\tilde{\cn}$ and $\tilde{\cn}^P$}\label{sec: parabolic ic sheaves}  In this section we continue our study of the IC-complexes $IC(\tilde{N}, \cl_\chi)$ for $\chi\in \widehat{Z}$.  The main result of the section is
Theorem~\ref{thm.cnppushforward}, which shows that the pushforward of this complex to the
variety $\tilde{\cn}^P$ 
studied in \cite{Borho1983} 
is equal to the IC-complex $IC(\tilde{\cn}^P, \cl_\chi)$.

We continue to assume $G = SL_n(\C)$.  Throughout this section, we fix
positive integers $d$ and $r$ such that $n = dr$.
Let $L_d$ be the Levi subgroup of $G$ containing $T$ whose simple
roots are the $\ga_k\in \Delta$ such that $k$ is not
divisible by $d$.   Then $L_d$ is a block diagonal matrix with $d \times d$ blocks;
that is, 
$$
L_d \cong S(GL_d(\C) \times GL_d(\C) \times \cdots \times GL_d(\C)),
$$
with $r$ factors on the right hand side.
We let $P_d$ denote the standard 
parabolic subgroup of $G$ with Levi factor $L_d$ and unipotent radical
$U_d$.  Since $B$ is upper triangular, $P_d$ is block upper triangular. 

Let  $I_d$ denote the
identity matrix in $GL_d(\C)$.
The center $Z(L_d)$ of 
$L_d$ consists of the block diagonal matrices
where the $i$-th block is $a_i I_d$ for $a_i\in \C$, subject to the condition
$(a_1 a_2 \cdots a_r)^d = 1$.  For simplicity,
we denote such a matrix by $z_{d}(a_1, \ldots, a_r)$.
The identity component
$Z(L_d)_0$ is the subset of elements of $Z(L_d)$
satisfying $a_1 a_2 \cdots a_r = 1$.  Therefore, the
component group  $C_d = Z(L_d)/Z(L_d)_0$ is isomorphic to 
$\Z_d$. 
The character group $\widehat{C_d}$ is generated
by the character 
$$
\phi: z_{d}(a_1, \ldots, a_r) \mapsto a_1a_2 \cdots a_r.
$$  
The inclusion map $Z \to Z(L_d)$ induces
a surjection $Z \to C_d$, with kernel
$\{ 1, \omunder^d, \omunder^{2d}, \ldots \}$.  
Pullback via this surjection yields an injective
map $\widehat{ C_d} \to \widehat{Z}$.

\begin{Lem} \label{lem.orderd}
\begin{enumerate}
\item The map $\widehat{ C_d} \to \widehat{Z}$
takes $\phi$ to $\chi_r$.  

\item If $\chi \in \widehat{Z}$ is a character of order $d$, then $\chi$ is 
is the image of an element of $\widehat{ C_d}$.
\end{enumerate}
\end{Lem}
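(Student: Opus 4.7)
For part (1), the plan is simply to trace an element of $Z$ through the composition $Z \hookrightarrow Z(L_d) \xrightarrow{\phi} \C^*$. A generator $\omunder = \omega I_n \in Z$ sits inside $L_d$ as the block-diagonal element $z_d(\omega, \omega, \ldots, \omega)$ with $r$ identical scalar blocks. Applying $\phi$ gives $\omega^r$, which by the definition of $\chi_r$ in Section~\ref{s.toric} is exactly $\chi_r(\omunder)$. Since both characters agree on a generator of the cyclic group $Z$, they agree on all of $Z$, proving that the pullback of $\phi$ under $Z\to C_d$ is $\chi_r$.

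For part (2), the plan is to identify the kernels in question and invoke the universal property of the quotient $Z \to C_d$. By Lemma~\ref{lem.chir}(2), a character $\chi \in \widehat{Z}$ of order $d$ satisfies
\[
\ker \chi = \{1, \omunder^d, \omunder^{2d}, \ldots\},
\]
and this subgroup is precisely the kernel of the surjection $Z \to C_d = Z(L_d)/Z(L_d)_0$ as recorded in the paragraph preceding the lemma. Consequently $\chi$ factors uniquely through $Z \to C_d$, producing a character $\bar{\chi} \in \widehat{C_d}$ whose pullback is $\chi$; this exhibits $\chi$ as lying in the image of $\widehat{C_d} \to \widehat{Z}$.

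No step here poses a real obstacle; the argument is essentially a bookkeeping check that the two identifications from Lemma~\ref{lem.chir}(2) and from the construction of $C_d$ coincide. The only point worth being careful about is matching the sign conventions in the identification of $Z$ with $\Z_n$ (via $\omunder \mapsto 1$) so that the exponent computation in (1) comes out to $r$ rather than $-r$, but this is fixed by the formula $\chi_k(z) = z^k$ already adopted in Section~\ref{s.toric}.
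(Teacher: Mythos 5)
Your proposal is correct. Part (1) is exactly the paper's argument: the authors dispose of it as ``direct calculation,'' and your calculation is the intended one --- the generator $\omega I_n$ of $Z$ sits in $Z(L_d)$ as $z_d(\omega,\ldots,\omega)$ with $r$ blocks, so $\phi$ sends it to $\omega^r = \chi_r(\omega I_n)$, and agreement on a generator of the cyclic group $Z$ gives agreement everywhere. For part (2) you take a genuinely different, though equally elementary, route. The paper deduces (2) from (1) using the cyclic structure of $\widehat{Z}$: any character of order $d$ is a power of $\chi_r$, and since $\chi_r$ is the image of $\phi$ and the pullback map $\widehat{C_d}\to\widehat{Z}$ is a group homomorphism, every power of $\chi_r$ is the image of the corresponding power of $\phi$. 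You instead bypass (1) entirely and argue via the kernel criterion: by Lemma~\ref{lem.chir}(2), $\ker\chi = \{1,\omunder^d,\omunder^{2d},\ldots\}$, which is precisely the kernel of the surjection $Z\to C_d$ recorded just before the lemma, so $\chi$ factors through $C_d$ and is therefore a pullback. Both arguments are sound; the paper's is a one-line consequence of (1), while yours is independent of (1) and only needs containment $\ker(Z\to C_d)\subseteq\ker\chi$ together with surjectivity of $Z\to C_d$, so it would survive even if one only knew the kernel of $Z\to C_d$ up to that containment. Your closing remark about sign conventions is harmless but not really at issue, since $\chi_k(z)=z^k$ is fixed in Section~\ref{s.toric}.
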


 \begin{proof}
 (1) holds by direct calculation.  (2) follows since any element $\chi$ of $\widehat{Z}$ of order $d$ is a power
 of $\chi_r$, which is the image of an element of $\widehat{ C_d}$.
\end{proof}

\begin{Rem} \label{rem.character}
By abuse of notation, we will frequently use the same letter $\chi$ for an element of $\widehat{ C_d} $ 
and its image in  $\widehat{Z}$.  
\end{Rem}

For the remainder of this section, since $d$ is fixed, we simplify the notation by writing $P = LU_P$ for
$P_d = L_d U_d$.  We write $U_L = U \cap L$.   We let $\chi \in \widehat{Z}$ denote a character of order $d$.
We may repeat these assumptions for emphasis.

Let $\nilp_d$ be the matrix which has entries equal to $1$ above
the diagonal in each $d \times d$ block, and zeroes elsewhere.  In other words, $\nilp_d$ is the sum of simple root vectors corresponding to $\alpha_k\in \Delta$ such that $k$ is not divisible by $d$.  Thus, $\nilp_d$ is a principal nilpotent element of $\fl$. 
The next proposition describes the stabilizer groups $P^{\nilp_d}$ and $L^{\nilp_d}$ and the corresponding component
groups.

\begin{Lem} \label{lem.stabilizer}
\begin{enumerate}
\item  $L^{\nilp_d} = Z(L) U_L^{\nilp_d}$.

\item $P^{\nilp_d} = L^{\nilp_d} U_P^{\nilp_d}$.

\item The inclusions $Z(L)\subset L^{\nilp_d} \subset P^{\nilp_d}$ induce identifications of component groups
$$
Z(L)/Z(L)_0 = L^{\nilp_d} / (L^{\nilp_d})_0 = P^{\nilp_d} / (P^{\nilp_d})_0.
$$
\end{enumerate}
\end{Lem}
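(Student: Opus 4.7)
The plan is to exploit the block structure of $L = L_d \cong S(GL_d(\C)^r)$ and the fact that $\nilp_d$ restricts to a regular (principal) nilpotent element $e_d$ in each $\mathfrak{gl}_d$ factor. The starting point is the classical description of the centralizer of a regular nilpotent in $GL_d(\C)$: it consists of the invertible polynomials in $e_d$, namely
\[
GL_d(\C)^{e_d} = \{a_0 I_d + a_1 e_d + \cdots + a_{d-1} e_d^{d-1} \mid a_0 \neq 0\},
\]
which is an abelian, \emph{connected} group. In particular, factoring out the scalar leading term gives a direct product decomposition $GL_d(\C)^{e_d} = (\C^* \cdot I_d) \times U_{GL_d}^{e_d}$.

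For part (1), I would assemble these blockwise centralizers: an element of $L^{\nilp_d}$ is a tuple $(g_1,\ldots,g_r)$ with each $g_i$ a polynomial in $e_d$ and $\prod \det(g_i)=1$. Writing $g_i = a_{i,0} I_d \cdot u_i$ with $u_i \in U_L^{\nilp_d}$, the determinant constraint becomes $\prod a_{i,0}^d = 1$, which is exactly the condition $\mathrm{diag}(a_{1,0}I_d,\ldots,a_{r,0}I_d) \in Z(L)$. This gives $L^{\nilp_d} = Z(L)\cdot U_L^{\nilp_d}$.

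For part (2), I would use the Levi decomposition $P = L\cdot U_P$. Given $p = \ell u \in P^{\nilp_d}$, the equation $\mathrm{Ad}(\ell^{-1})\nilp_d = \mathrm{Ad}(u)\nilp_d$ puts its left-hand side in $\fl$, while its right-hand side lies in $\nilp_d + \fu_P$. Since $\fl \cap \fu_P = 0$, both sides equal $\nilp_d$, so $\ell \in L^{\nilp_d}$ and $u \in U_P^{\nilp_d}$.

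For part (3), I would observe that $Z(L) \cap U_L^{\nilp_d} = \{e\}$ (since $Z(L)$ is semisimple in $L$ and $U_L^{\nilp_d} \subset U_L$), so the product decomposition from (1) is a direct product of groups; similarly $L^{\nilp_d}\cap U_P^{\nilp_d} \subset L\cap U_P = \{e\}$, so the decomposition in (2) is a direct product of algebraic groups as well. Because $U_L^{\nilp_d}$ and $U_P^{\nilp_d}$ are unipotent, hence connected, the identity components are
\[
(L^{\nilp_d})_0 = Z(L)_0 \times U_L^{\nilp_d}, \qquad (P^{\nilp_d})_0 = (L^{\nilp_d})_0 \times U_P^{\nilp_d},
\]
from which the claimed chain of identifications of component groups with $Z(L)/Z(L)_0$ follows by inspection.

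The main obstacle is really just the first computation: verifying cleanly that $GL_d^{e_d}$ is connected and splits as $\C^* \times U_{GL_d}^{e_d}$, and then confirming that the determinant constraint transports correctly to the $Z(L)$-constraint on the tuple $(a_{1,0},\ldots,a_{r,0})$. Once (1) is in hand, parts (2) and (3) are essentially formal consequences of the Levi decomposition and the fact that the unipotent pieces are connected.
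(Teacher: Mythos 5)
Your proposal is correct and takes essentially the same route as the paper: your part (1) is exactly the blockwise $GL_d(\C)$ computation (centralizer of a regular nilpotent = invertible polynomials in it) that the paper declares straightforward and omits, your part (2) is the paper's Levi-decomposition argument (the paper phrases it via the projection $\fp \to \fl$, you via $\fl \cap \fu_P = 0$ and $\mathrm{Ad}(u)\nilp_d \in \nilp_d + \fu_P$), and your part (3) rests, as in the paper, on connectedness of the unipotent centralizers. One small correction: $P^{\nilp_d} = L^{\nilp_d} \ltimes U_P^{\nilp_d}$ is only a semidirect product, since $Z(L)_0$ acts nontrivially on $\fu_P$, but your identification of identity components and component groups only uses the product decomposition together with connectedness of $U_P^{\nilp_d}$, so the conclusion is unaffected.
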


\begin{proof} 
Statement (1) follows from a straightforward computation of the stabilizer of a principal nilpotent element in $GL_d(\C)$, which we omit.
We prove (2). Suppose $p = u \ell \in P^{\nilp_d}$ with $u\in U_P$ and $\ell\in L$.  For any $x \in \fl$, the image of
$u \ell x$ under the projection
$\fp \to \fl$ equals $\ell x$.  Since $u \ell \nilp_d = \nilp_d \in \fl$, we see that $\ell \nilp_d = \nilp_d$, so
$\ell \in L^{\nilp_d}$.  This implies that $u \in U_P^{\nilp_d}$, proving (2).   Finally, the equalities in (3) follow 
from the fact that the groups $U_P^{\nilp_d}$ and $U_L^{\nilp_d}$ are unipotent, and therefore connected.  
\end{proof}

We have a Levi decomposition 
$\fp = \fl \oplus \fu_P$.  Let $\cn_L$ denote
the nilpotent cone in $\fl$.  Recall from Section~\ref{sec.genspringer} that
$$
\tilde{\cn}^P = G \times^P (\cn_L + \fu_P);
$$
we write $[g,x]_P$ for the element of $\tilde{\cn}^P$ corresponding to $(g,x)$ for 
$g \in G$, $x \in \cn_L + \fu_P$.  Borho and MacPherson defined a stratification
of  $\tilde{\cn}^P$ indexed by $L$-orbits
on the nilpotent cone $\cn_L$; the stratum corresponding
to the orbit $\co_L \subset \cn_L$ is $G \times^P (\co_L + \fu_P)$ (see \cite[Section 2.10]{Borho1983}; note
that $\co_L + \fu_P$ is $P$-stable). The
stratum corresponding to the principal nilpotent orbit 
$\co_L^{pr}$ in $\cn_L$ is denoted 
$\tilde{\cn}^P_{reg}$.  

\begin{Lem} \label{lem.fuintersect}
Suppose $\chi \in \widehat{Z}$ has order $d$, and let $P = P_d$,
$L = L_d$.  Then $\fu \cap (\co_L^{pr} + \fu_P) = \fu_{\chi}$.
\end{Lem}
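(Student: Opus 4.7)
The plan is to exploit the Levi decomposition $\fu = \fu_L \oplus \fu_P$ and reduce both sides of the desired equality to the same set of coefficient conditions on simple root vectors. The entire proof is then essentially bookkeeping, with no substantive obstacle.

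First, I would observe that for any $u \in \fu$, writing uniquely $u = x + y$ with $x \in \fu_L$ and $y \in \fu_P$, we have $u \in \co_L^{pr} + \fu_P$ if and only if $x \in \co_L^{pr}$. Indeed, the projection $q : \fp \to \fl$ (with kernel $\fu_P$) sends $\co_L^{pr} + \fu_P$ onto $\co_L^{pr}$, and on $\fu$ it is simply the projection $u \mapsto x$. Since $\co_L^{pr} \subset \fl$ and $\fl \cap \fu = \fu_L$, this yields $\fu \cap (\co_L^{pr} + \fu_P) = (\co_L^{pr} \cap \fu_L) + \fu_P$.

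Next, I would establish the following combinatorial characterization: $x \in \fu_L$ lies in $\co_L^{pr}$ if and only if the coefficient of $E_{\ga_k}$ in $x$ is nonzero for each simple root $\ga_k$ of $L$, i.e., each $k$ with $d \nmid k$. This uses the block structure $L \cong S(GL_d(\C) \times \cdots \times GL_d(\C))$: elements of $\fu_L$ are block strictly upper triangular with $d \times d$ blocks, and such an element is principal nilpotent in $\fl$ iff each block is principal nilpotent in $\mathfrak{gl}_d$. A short rank computation shows that a strictly upper triangular $d \times d$ matrix is principal nilpotent iff each of its superdiagonal entries is nonzero, and those entries are precisely the coefficients of the simple root vectors $E_{\ga_k}$ of the corresponding block.

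Finally, I would invoke Corollary~\ref{cor.wadchi}: since $\chi$ has order $d$, $\fu_\chi = p^{-1}(\toric_{ad,\chi})$ is the set of $u \in \fu$ such that $p(u)$ has nonzero $E_{\ga_k}$-coefficient for every $k$ with $d \nmid k$. Now, the simple root vectors $E_{\ga_k}$ with $d \nmid k$ lie in $\fu_L$, while those with $d \mid k$ lie in $\fu_P$. Consequently, the condition on $p(u)$ depends only on the $\fu_L$-component $x$ and reads exactly: the coefficient of $E_{\ga_k}$ in $x$ is nonzero for every simple root $\ga_k$ of $L$. By the previous step this is equivalent to $x \in \co_L^{pr} \cap \fu_L$, so $\fu_\chi = (\co_L^{pr} \cap \fu_L) + \fu_P = \fu \cap (\co_L^{pr} + \fu_P)$, completing the proof.

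The only step requiring care is matching the ``$k$ not divisible by $d$'' condition from Corollary~\ref{cor.wadchi} to the simple roots of $L$, together with the elementary rank characterization of principal nilpotents in $\mathfrak{gl}_d$; neither is a genuine obstacle.
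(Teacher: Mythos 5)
Your proof is correct and follows essentially the same route as the paper's: both reduce membership in $\fu \cap (\co_L^{pr}+\fu_P)$ to the nonvanishing of the simple-root coefficients $a_{\ga_k}$ for the simple roots of $L$ (those with $d \nmid k$), and then identify this with the description of $\fu_\chi$ coming from Propositions~\ref{prop.wchi} and~\ref{prop.nchi} (equivalently Corollary~\ref{cor.wadchi}). The only difference is that you spell out, via the projection $q$ and the block-by-block regular-nilpotent criterion in $\mathfrak{gl}_d$, the standard fact that the paper asserts in one line.
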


\begin{proof}
Write $x \in \fu$ as a sum of root vectors:
$$
x = \sum_{i = 1}^{n-1} a_{\ga_i} X_{\ga_i} + \sum_{\gb \in \Phi^+ - \Delta} a_{\gb} X_{\gb},
$$
where the $\ga_i$ are the simple roots and $a_\gamma\in \C$ for all $\gamma\in \Phi^+$.  We have $x \in \co_L^{pr} + \fu_P$ if and only if for each
simple root $\ga_i$ of $L$,  $a_{\ga_i} \neq 0$.
The simple roots of $L$ are the $\ga_i$ where
$i$ is not divisible by $d$.  By Propositions \ref{prop.wchi} and 
\ref{prop.nchi}, $x$ is in $\fu_{\chi}$ if and only
if $a_{\ga_i}$ is nonzero when $i$ is not divisible by $d$.
Hence, $x$ is in $\fu_{\chi}$ if and only if
$x$ is in $\fu \cap (\co_L^{pr} + \fu_P)$.
\end{proof}

There is a semismall map $\rho_P: \tilde{\cn} \to \tilde{\cn}^P$ \cite[Lemma 2.10(e)]{Borho1983}, defined as follows.
Given $g \in G$ and $x \in \fu$ (resp.~$x \in \cn_L + \fu_P$),
temporarily write $[g,x]_B$ (resp.~ $[g,x]_P$) for the equivalence class of $(g,x)$ in
$\tilde{\cn} = G \times^B \fu$ 
(resp.~ in $\tilde{\cn}^P = G \times^P (\cn_L + \fu_P)$).  By definition, $\rho_P([g,x]_B) = [g,x]_P$.

\begin{Prop} \label{prop.partialisom}
Suppose $\chi \in \widehat{Z}$ has order $d$, and let $P = P_d$.
The map $\rho_P: \tilde{\cn} \to \tilde{\cn}^P$ satisfies
$\rho_P^{-1}(\tilde{\cn}^P_{reg}) = \tilde{\cn}_{\chi}$.
Moreover, $\rho_P$ takes $\tilde{\cn}_{\chi}$ isomorphically
onto~$\tilde{\cn}^P_{reg}$.
\end{Prop}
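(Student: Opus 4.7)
Part (1) follows quickly from results already established. By Proposition~\ref{prop.nchi}(2), $\tilde{\cn}_\chi = G \times^B \fu_\chi$, and since $\co_L^{pr} + \fu_P$ is $P$-stable, an element $[g,x]_B \in \tilde{\cn}$ lies in $\rho_P^{-1}(\tilde{\cn}^P_{reg})$ if and only if $x \in \co_L^{pr} + \fu_P$. Combined with Lemma~\ref{lem.fuintersect}, which identifies $\fu \cap (\co_L^{pr} + \fu_P)$ with $\fu_\chi$, this immediately yields $\rho_P^{-1}(\tilde{\cn}^P_{reg}) = G \times^B \fu_\chi = \tilde{\cn}_\chi$.

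For part (2), my plan is to reduce to the $P$-equivariant statement that the natural map
\[
\alpha: P \times^B \fu_\chi \to \co_L^{pr} + \fu_P, \qquad [\gamma,x]_B \mapsto \gamma \cdot x,
\]
is an isomorphism of varieties. Granting this, the standard associativity isomorphism $G \times^B Y \cong G \times^P (P \times^B Y)$ applied with $Y = \fu_\chi$ gives
\[
\tilde{\cn}_\chi = G \times^B \fu_\chi \cong G \times^P (\co_L^{pr} + \fu_P) = \tilde{\cn}^P_{reg},
\]
and chasing through the identifications shows that this isomorphism is precisely the restriction of $\rho_P$.

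To verify $\alpha$ is an isomorphism, both source and target are smooth varieties in characteristic zero (the source as a geometric quotient of $P \times \fu_\chi$ by the free right $B$-action, the target as the sum of a single orbit with an affine space), so it suffices to show that $\alpha$ is bijective. Surjectivity is straightforward: any $y = y_\fl + y_\fu \in \co_L^{pr} + \fu_P$ may be translated by a suitable $\ell \in L$ so that $\ell^{-1} y_\fl \in \fu_L \cap \co_L^{pr}$, placing $\ell^{-1} y$ in $\fu_\chi$. The main obstacle I expect is injectivity: given $\gamma \in P$ with $\gamma \cdot x_1 = x_2$ for $x_1, x_2 \in \fu_\chi$, I write $\gamma = \ell u$ with $\ell \in L$ and $u \in U_P$ and project both sides to $\fl$ via the natural map $\fp \to \fl$. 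Since $U_P$ acts trivially modulo $\fu_P$, this yields $\ell \cdot \bar{x}_1 = \bar{x}_2$, with both $\bar{x}_i$ in $\co_L^{pr} \cap \fu_L$, the unique open $(B \cap L)$-orbit in $\fu_L$ through $\nilp_d$. Writing $\bar{x}_i = b_i \cdot \nilp_d$ with $b_i \in B \cap L$, we obtain $b_2^{-1} \ell b_1 \in L^{\nilp_d}$. By Lemma~\ref{lem.stabilizer}(1), $L^{\nilp_d} = Z(L) U_L^{\nilp_d} \subseteq B \cap L$, which forces $\ell \in B \cap L$ and hence $\gamma \in (B \cap L) U_P = B$, completing the injectivity step.
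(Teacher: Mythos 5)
Your argument is correct, and for the second assertion it takes a different route from the paper. Part (1) is the same as the paper's: $B$-stability of $\co_L^{pr}+\fu_P$ plus Lemma~\ref{lem.fuintersect} and Proposition~\ref{prop.nchi}. For part (2), the paper quotes Borho--MacPherson \cite[Lemma 2.10(b)]{Borho1983}: the fiber of $\rho_P$ over the stratum attached to an $L$-orbit $\co_L$ is the Springer fiber $\cb_L^x$ for $L$, which is a single point over the principal stratum, so $\rho_P$ restricts to a bijection $\tilde{\cn}_{\chi} \to \tilde{\cn}^P_{reg}$; smoothness of $\tilde{\cn}^P_{reg}$ and Zariski's Main Theorem then upgrade this to an isomorphism. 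You instead reduce, via $G\times^B \fu_\chi \cong G\times^P(P\times^B\fu_\chi)$, to the $P$-level statement that $P\times^B\fu_\chi \to \co_L^{pr}+\fu_P$ is bijective, and you prove injectivity by hand: project to $\fl$, use that the regular nilpotents of $\fl$ in $\fu_L$ form a single $(B\cap L)$-orbit, and invoke Lemma~\ref{lem.stabilizer}(1) to get $L^{\nilp_d}\subseteq B\cap L$, hence $\gamma\in(B\cap L)U_P=B$. This is essentially a self-contained proof of the fact the paper imports by citation (that the $L$-Springer fiber of a principal element is a point), which makes your argument more elementary and transparent, at the cost of length; the final step (bijective morphism onto a smooth, hence normal, target in characteristic zero is an isomorphism) is the same ZMT argument as the paper's, and note that only normality of the target is needed there, not smoothness of the source.
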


\begin{proof}
Since $\co_L^{pr}+\fu_P$ is $B$-invariant (as it is $P$-invariant), the intersection
$(\co_L^{pr}+\fu_P) \cap \fu$ is $B$-invariant.  The definition of $\rho_P$ implies
that
$$
\rho_P^{-1}(\tilde{\cn}^P_{reg}) = G \times^B ((\co_L^{pr}+\fu_P) \cap \fu). 
$$
By Lemma~\ref{lem.fuintersect}, $(\co_L^{pr}+\fu_P) \cap \fu= \fu_\chi$, so
$$
\rho_P^{-1}(\tilde{\cn}^P_{reg}) = G \times^B \fu_{\chi} = \tilde{\cn}_{\chi},
$$
where the last equality is by Proposition~\ref{prop.nchi}.  This proves
the first assertion of
the proposition.

We now show that $\rho_P$ takes $\tilde{\cn}_{\chi}$ isomorphically
onto~$\tilde{\cn}^P_{reg}$.
Borho and MacPherson \cite[Lemma 2.10(b)]{Borho1983} show that 
the fiber of $\rho_P$ over the stratum corresponding to $\co_L$ is isomorphic
to the Springer fiber $\cb_L^x$; here $\cb_L$ is the flag
variety for $L$, and $x$ is an element of $\co_L$.
For the stratum corresponding to the principal orbit 
$\co^{pr}_L$,
the Springer fiber $\cb_L^x$ is a single point.
Hence $\rho_P$ induces a bijection
\begin{equation} \label{e.bijection}
\tilde{\cn}_{\chi} = \rho_P^{-1}(\tilde{\cn}^P_{reg}) \to \tilde{\cn}^P_{reg}.
\end{equation}
Since $\tilde{\cn}^P_{reg}$ is smooth, 
Zariski's Main Theorem implies that~\eqref{e.bijection}
is an isomorphism of schemes, completing the proof.
\end{proof}

Observe that $\co^{pr}$ can be viewed as an open dense subset of any
of $\cn$, $\tilde{\cn}$, or $\tilde{\cn}^P$.
Indeed, the Springer resolution $\mu:\tilde{\cn} \to \cn$ is an isomorphism over
$\co^{pr}$; the map $\mu$ factors through $\rho_P: \tilde{\cn} \to \tilde{\cn}^P$, and $\rho_P$ takes the
open set $\tilde{\cn}_{\chi}$ (which contains $\co^{pr}$)
isomorphically onto its image.  A character $\chi$ of $Z$ corresponds to a local system
$\cl_{\chi}$ on $\co^{pr}$, and we obtain IC complexes
$IC(\tilde{\cn}, \cl_{\chi})$ and $IC(\tilde{\cn}^P, \cl_{\chi})$.

\begin{Thm} \label{thm.cnppushforward}
Suppose $\chi \in \widehat{Z}$ has order $d$, and let $P = P_d$.
Then $(\rho_P)_* IC(\tilde{\cn}, \cl_{\chi}) = IC(\tilde{\cn}^P, \cl_{\chi})$.
\end{Thm}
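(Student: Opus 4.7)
The plan is to compute $(\rho_P)_* IC(\tilde{\cn}, \cl_\chi)$ explicitly via base change, and then verify the result satisfies the defining support and cosupport properties of $IC(\tilde{\cn}^P, \cl_\chi)$. By Proposition~\ref{prop.pushforwardcn} (which applies part~(4) of Proposition~\ref{prop.pushforward} to $\tilde{\map}$), we have $IC(\tilde{\cn}, \cl_\chi) = (j_\chi)_!\cl_\chi[\dim\cn]$, where $j_\chi: \tilde{\cn}_\chi \hookrightarrow \tilde{\cn}$.  By Proposition~\ref{prop.partialisom}, we have a Cartesian diagram
\[
\xymatrix{
\tilde{\cn}_\chi \ar[r]^{j_\chi} \ar[d]_{\phi} & \tilde{\cn} \ar[d]^{\rho_P} \\
\tilde{\cn}^P_{reg} \ar[r]^{j^P_{reg}} & \tilde{\cn}^P
}
\]
in which $\phi$ is an isomorphism and $j^P_{reg}$ denotes the open inclusion. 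Since $\mu = \mu_P \circ \rho_P$ with $\mu$ proper and $\mu_P$ separated, $\rho_P$ is proper, so $(\rho_P)_* = (\rho_P)_!$, and by functoriality of lower-shriek:
\[
(\rho_P)_* IC(\tilde{\cn}, \cl_\chi) = (\rho_P \circ j_\chi)_! \cl_\chi[\dim\cn] = (j^P_{reg})_! \phi_! \cl_\chi[\dim\cn] = (j^P_{reg})_! \cl_\chi[\dim\cn],
\]
where the last equality uses that $\phi$ is an isomorphism and identifies $\cl_\chi$ on either side (both extend the $\chi$-local system on~$\co^{pr}$).

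I would then verify that $(j^P_{reg})_! \cl_\chi[\dim\cn]$ satisfies the defining properties of $IC(\tilde{\cn}^P, \cl_\chi)$. Its restriction to $\tilde{\cn}^P_{reg}$ is $\cl_\chi[\dim\cn]$ by definition of $(-)_!$, and the support condition is automatic: all stalks vanish outside $\tilde{\cn}^P_{reg}$, while on $\tilde{\cn}^P_{reg}$ the complex has cohomology only in degree $-\dim\cn$. The essential check is the cosupport condition, for which I would use Verdier duality. Since $\rho_P$ is proper, $\mathbb D$ commutes with $(\rho_P)_*$, yielding
\[
\mathbb D\bigl((\rho_P)_* IC(\tilde{\cn}, \cl_\chi)\bigr) = (\rho_P)_* IC(\tilde{\cn}, \cl_{\chi^{-1}}).
\]
Since $\chi^{-1}$ has the same order $d$ as $\chi$, Proposition~\ref{prop.wchi} gives $\tilde{\cn}_{\chi^{-1}} = \tilde{\cn}_\chi$, so the argument of the previous paragraph applied to $\chi^{-1}$ shows this Verdier dual equals $(j^P_{reg})_! \cl_{\chi^{-1}}[\dim\cn]$, whose stalks also vanish outside $\tilde{\cn}^P_{reg}$. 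The cosupport condition then follows by the same dimension estimate as the support condition, and combining all three properties identifies $(\rho_P)_* IC(\tilde{\cn}, \cl_\chi)$ with $IC(\tilde{\cn}^P, \cl_\chi)$.

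The main subtlety will be tracking the identifications of the local system $\cl_\chi$ across $\tilde{\cn}_\chi$, $\tilde{\cn}^P_{reg}$, and $\co^{pr}$ and applying the base-change identities to the correct Cartesian square. The point of the argument is that essentially all of the required geometry was already packaged into Propositions~\ref{prop.pushforwardcn}, \ref{prop.partialisom}, and \ref{prop.wchi}; no decomposition-theorem input or dimension estimate specific to $\rho_P$ is needed. The key observation is the symmetry $\tilde{\cn}_{\chi^{-1}} = \tilde{\cn}_\chi$, which depends only on the order of the character; this symmetry makes the Verdier-dual computation collapse to the same shape and closes the argument.
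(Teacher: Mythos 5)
Your proposal is correct, and it takes a genuinely different route from the paper. The paper's proof stays inside the decomposition-theorem framework: it first shows $(\rho_P)_* IC(\tilde{\cn}, \cl_{\chi})$ is perverse (using that $\rho_P \circ \tilde{\map}$ is semismall and $\tilde{\cm}$ is rationally smooth), identifies $IC(\tilde{\cn}^P, \cl_{\chi})$ as a summand by restricting to $\tilde{\cn}^P_{reg}$, and then kills the complementary summand via proper base change over $\tilde{\cn}^P \setminus \tilde{\cn}^P_{reg}$ together with the vanishing $i^* IC(\tilde{\cn}, \cl_{\chi}) = 0$ recorded in Proposition~\ref{prop.pushforwardcn}. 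You instead exploit the stronger conclusion available from Proposition~\ref{prop.pushforward}(4) (note that Proposition~\ref{prop.pushforwardcn} as stated records only parts (1) and (3), so you should cite the general proposition applied to $\tilde{\map}$, as the paper itself does): since $IC(\tilde{\cn}, \cl_{\chi}) = (j_\chi)_! \cl_{\chi}[\dim\cn]$ and $\rho_P$ is proper, the factorization $\rho_P \circ j_\chi = j^P_{reg} \circ \phi$ with $\phi$ an isomorphism (Proposition~\ref{prop.partialisom}) computes the pushforward outright as $(j^P_{reg})_! \cl_{\chi}[\dim\cn]$, with no decomposition theorem, semismallness, or perversity input; the Cartesian square and base change you mention are not actually needed, only the factorization. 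Your Verdier-duality treatment of the cosupport axiom is sound: $\mathbb{D} IC(\tilde{\cn}, \cl_{\chi}) \cong IC(\tilde{\cn}, \cl_{\chi^{-1}})$ (dual local system, a standard fact worth stating explicitly), duality commutes with $(\rho_P)_*$ by properness, and $\tilde{\cn}_{\chi^{-1}} = \tilde{\cn}_{\chi}$ holds for the trivial reason that $\ker\chi^{-1} = \ker\chi$, so the dual collapses to the same extension-by-zero shape. What your approach buys is a stronger byproduct the paper does not record, namely that $IC(\tilde{\cn}^P, \cl_{\chi})$ is itself the extension by zero of $\cl_{\chi}$ from $\tilde{\cn}^P_{reg}$ (the analogue on $\tilde{\cn}^P$ of Proposition~\ref{prop.pushforward}(3)--(4)); what the paper's argument buys is that it only needs the weaker restriction-vanishing statement and stays within the Borho--MacPherson perversity formalism it reuses elsewhere. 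The local-system bookkeeping you flag is handled exactly as you suggest, via the paper's remark that $IC$ complexes for local systems agreeing on the intersection of two dense open sets are canonically isomorphic.
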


\begin{proof}
By the Decomposition Theorem (see Theorem \ref{decomp}), $(\rho_P)_* IC(\tilde{\cn}, \cl_{\chi})$ is a
direct sum of shifted $IC$ complexes on $\tilde{\cn}^P$.  We claim that the shifts are all trivial.  To prove the claim,
it suffices
to show that $(\rho_P)_* IC(\tilde{\cn}, \cl_{\chi})$ is perverse, since no nontrivial shift of 
an intersection cohomology complex remains perverse (see \cite[p.~37]{Borho1983}).  Observe
that the composition
$\rho_P \circ \widetilde{\eta}: \tilde{\cm} \to \tilde{\cn}^P$ is semismall,
since $\rho_P$ is semismall and $\widetilde{\eta}$ is finite.  Moreover, 
$\tilde{\cm}$ is rationally smooth, since it is locally a quotient of a smooth variety
by a finite group.
Therefore, by  \cite[Section 1.7]{Borho1983}, $(\rho_P \circ \widetilde{\eta})_* \shq_{\tilde{\cm}}$ is perverse.  
By Proposition \ref{prop.pushforwardcn}, 
$$
(\rho_P \circ \widetilde{\eta})_* \shq_{\tilde{\cm}} =(\rho_P)_* (\oplus_{\chi \in \widehat{Z}} IC(\tilde{\cn}, \cl_{\chi}) ).
$$
Since the right hand side satisfies the support condition to be a perverse sheaf,
each summand also satisfies the same support condition
and therefore each summand is also perverse.  The claim follows.

The proper base change theorem implies that the pushforward $(\rho_P)_*$ commutes with the restriction to the
open set $\tilde{\cn}_{\chi} \cong
\tilde{\cn}^{P}_{reg}$.  Since $\rho_P$ is an isomorphism over
this open set, and the local system $\cl_{\chi}$ extends
to this open set,
the perverse sheaf $(\rho_P)_* IC(\tilde{\cn},\cl_{\chi})$
restricts to $\cl_{\chi}[n]$ on $\tilde{\cn}^{P}_{reg}$ (where
$n = \dim \cn$).  Arguing as in the proof of Proposition \ref{prop.pushforward}, we see 
that the only intersection cohomology complex on $\tilde{\cn}^P$ whose
restriction to $\tilde{\cn}^{P}_{reg}$ is $\cl_{\chi}[n]$ is $IC(\tilde{\cn}_P,\cl_{\chi})$.
Therefore, $IC(\tilde{\cn}^P,\cl_{\chi})$
must occur as a summand in $(\rho_P)_* IC(\tilde{\cn},\cl_{\chi})$.
Hence we can write
$$
(\rho_P)_* IC(\tilde{\cn},\cl_{\chi}) = IC(\tilde{\cn}^P,\cl_{\chi}) \oplus \ck,
$$
where $\ck$ is a direct sum of intersection cohomology complexes
on subvarieties of $\tilde{\cn}^P$.  To complete the proof, we must show that
$\ck = 0$.

As a step towards this, we claim that 
if $IC(Z,\cf)$ occurs in $\ck$, then
 $Z$ must be contained in $\tilde{\cn}^P \setminus \tilde{\cn}^{P}_{reg}$.  Indeed, if not, then
$Z \cap \tilde{\cn}^{P}_{reg}$ would be open and dense in $Z$,
so it would contain a point $z$ of the smooth open set where
the local system $\cf$ is defined.  
Therefore $\ch^*_z ( (\rho_P)_* IC(\tilde{\cn},\cl_{\chi}))$
would be at least $2$-dimensional, since it would have
contributions from $IC(\tilde{\cn}^P,\cl_{\chi})$ and
$IC(Z,\cf)$.  This contradicts the fact that
the restriction of $(\rho_P)_* IC(\tilde{\cn},\cl_{\chi}) $
to $\tilde{\cn}^{P}_{reg}$ is $\cl_{\chi}[n]$.  Therefore
$Z \subset \tilde{\cn}^P \setminus \tilde{\cn}^{P}_{reg}$,
proving the claim.

Consider the cartesian diagram
$$
\begin{CD}
\tilde{\cn} \setminus \tilde{\cn}_{\chi} @>i>> \tilde{\cn} \\
@VV{\rho_P'}V @VV{\rho_P}V\\
\tilde{\cn}^P \setminus \tilde{\cn}^{P}_{reg} @>j>> \tilde{\cn}^P
\end{CD}
$$
where $\rho_P'$ is the restriction of $\rho_P$.  By the proper base change theorem,
$$
j^* (\rho_P)_* IC(\tilde{\cn},\cl_{\chi}) = (\rho'_P)_* i^* IC(\tilde{\cn},\cl_{\chi}) = 0,
$$
where the second equality holds since $i^* IC(\tilde{\cn},\cl_{\chi}) = 0$ by Proposition \ref{prop.pushforwardcn}.
Since $\ck$ is a direct summand in $ (\rho_P)_* IC(\tilde{\cn},\cl_{\chi})$, we deduce that $j^* \ck = 0$.

We know that $\ck$, if nonzero, is a direct sum of
terms of the form $IC(Z,\cf)$, where $Z$ is a subvariety
of $\tilde{\cn}^P \setminus \tilde{\cn}^{P}_{reg}$.  Observe that
for such a $Z$,
following the conventions of Section \ref{sec.decomp}, we have
$j^* IC(Z,\cf) = IC(Z,\cf)$.  Such a complex has a nonzero stalk
at any point $z$ in the open set of $Z$ where the local system 
$\cf$ is defined.  Since $j^* \ck = 0$, we see that no term of the form
$IC(Z,\cf)$ can appear in $\ck$, and therefore $\ck = 0$, as desired.
\end{proof}

%%%%%%%%%%%%%%%%%%%%%%%%%%%%%

\section{The generalized Springer correspondence and Lusztig Sheaves} \label{s.induction}

Recall that our goal is to prove that $\psi_* \shq_{\tilde{\cm}}$ is a direct sum of Lusztig sheaves, which were defined in Section~\ref{sec.genspringer} above.  In order to proceed we need to realize each $IC(\tilde{\cn}^P, \cl_\chi)$ as an induced complex.  After some preliminary discussion, we prove our main theorem, which is Theorem~\ref{thm.genspringerA} below.

We keep the notation of Section \ref{sec: parabolic ic sheaves}.  Thus,
$G = SL_n(\C)$, $\chi \in \widehat{Z}$ has order
$d$, and $\nilp_d\in \cn$, $P_d = L_d U_{P_d}$ are as in Section \ref{sec: parabolic ic sheaves}.  
As in that section, we write $P = LU_P$ for $P_d = L_d U_{P_d}$.

We recall some notation from Section \ref{sec.genspringer}.  The projection $q: \fp = \fl + \fu_P \to \fl$ is $P$-equivariant,
where the $P$-action on the target is the extension of the adjoint $L$-action to $P$ by requiring that $U_P$ acts
trivially.  We use the same notation $q$ for the induced maps $\cn_L + \fu_P \to \cn_L$ and
$\co_L^{pr} + \fu_P \to \co_L^{pr}$, which are also $P$-equivariant.  The map $q$ induces a map
$\pi_P: \tilde{\cn}^P \to G\times^P \cn_L$.  

Note that if $x \in \fl$, even though $q(x) \in \fl$ is the same element as $x$,
the notation indicates that the $P$-action is different.  Indeed, if $u \in U_P$, then $u \cdot x = x+y$ for some
$y \in \fu_P$, but $u \cdot q(x) = q(x)$.  Given $x \in \fl$, we will write $\overline{x} = q(x)$ for the same element,
but with trivial action of $U_P$. Recall that $\nilp$ denotes a principal nilpotent element.

\begin{Lem} \label{lem:dimension}
$P \cdot \nilp$ is open in $\cn_L + \fu_P$.
\end{Lem}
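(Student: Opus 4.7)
The plan is to prove $P \cdot \nilp$ is open in $\cn_L + \fu_P$ via a dimension count. First I would verify that $\nilp$ actually lies in $\co_L^{pr} + \fu_P$. Writing $\nilp = \sum_{\ga \in \Delta} E_\ga$, we can decompose $\nilp = \nilp_d + \sum_{d \mid k} E_{\ga_k}$, where the first summand is the principal nilpotent $\nilp_d \in \co_L^{pr}$ of $\fl$ defined earlier in the section, and the second summand lies in $\fu_P$ (its terms correspond to simple roots $\ga_k$ with $d \mid k$, which are not roots of $L$). Since $\co_L^{pr} + \fu_P$ is $P$-invariant, this shows $P \cdot \nilp \subseteq \co_L^{pr} + \fu_P$.

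Next I would compute $\dim P \cdot \nilp = \dim P - \dim P^{\nilp}$. The key observation is that $G^{\nilp} = Z U^{\nilp}$ (recalled in Section~\ref{sec.local-system-nilp-orbit}) is already contained in $P$: indeed $Z \subseteq L \subseteq P$ and $U^{\nilp} \subseteq U \subseteq P$. Therefore $P^{\nilp} = G^{\nilp}$ has dimension $\dim U^{\nilp} = \rank G = n-1$, giving
\[
\dim P \cdot \nilp = \dim L + \dim \fu_P - (n-1).
\]
On the other hand, since $\co_L^{pr}$ is the principal nilpotent orbit of $L$, its dimension is $\dim L - \rank L$. Because $L = S(GL_d(\C) \times \cdots \times GL_d(\C))$ contains the maximal torus $T$ of $G$, we have $\rank L = n - 1$, so
\[
\dim(\co_L^{pr} + \fu_P) = \dim \co_L^{pr} + \dim \fu_P = \dim L - (n-1) + \dim \fu_P,
\]
which matches $\dim P \cdot \nilp$.

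To conclude, note that $\cn_L = \overline{\co_L^{pr}}$ is irreducible, hence so is $\cn_L + \fu_P$ (being the product $\cn_L \times \fu_P$ as a variety), and so is its open subset $\co_L^{pr} + \fu_P$. A locally closed $P$-orbit of top dimension inside an irreducible variety is automatically open and dense. Thus $P \cdot \nilp$ is open in $\co_L^{pr} + \fu_P$, and since $\co_L^{pr} + \fu_P$ is itself open in $\cn_L + \fu_P$ (as $\co_L^{pr}$ is open in $\cn_L$), we obtain openness in $\cn_L + \fu_P$. No real obstacle arises; the argument is essentially a routine dimension count, and the only subtle point is recognizing that $G^\nilp \subseteq P$, which forces $P^\nilp = G^\nilp$ and makes the two dimensions coincide.
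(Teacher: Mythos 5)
Your proof is correct and follows essentially the same route as the paper: a dimension count showing $\dim P\cdot\nilp = \dim P - (n-1) = \dim(\cn_L+\fu_P)$ (using $P^{\nilp}=G^{\nilp}$), combined with irreducibility of $\cn_L+\fu_P$ and the fact that an orbit is open in its closure. The extra details you supply (the decomposition of $\nilp$ showing it lies in $\co_L^{pr}+\fu_P$, and the observation $G^{\nilp}=ZU^{\nilp}\subseteq P$ justifying $P^{\nilp}=G^{\nilp}$) are sound refinements of the same argument.
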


\begin{proof}
Observe that $P \cdot \nilp$ has the same dimension as $\cn_L + \fu_P$.  Indeed, $P^\nilp = G^\nilp$ has dimension $n-1$, so $\dim P \cdot \nilp = \dim P - (n-1)$; on the
other hand, $\dim \cn_L = \dim L - (n-1)$, so $\dim \cn_L + \dim \fu_P = \dim L + \dim U_P - (n-1)
= \dim P - (n-1)$.  This verifies the assertion about dimensions.  Since  $\cn_L + \fu_P$ is irreducible, the closure of $P \cdot \nilp$
must equal $\cn_L + \fu_P$. Since any orbit is open in its closure (see 
\cite[Section 2.1]{Jantzen}), $P \cdot \nilp$ is open in $\cn_L + \fu_P$.
\end{proof}

\begin{Rem}
An alternative proof of the lemma is 
as follows.  We know that $\co^{pr} = G \cdot \nilp$ is open and dense in $\cn$.  Since
$\tilde{\cn}^P \to \cn$ is an isomorphism over $\co^{pr}$, we can identify $G \cdot \nilp$ with its inverse image
in $\tilde{\cn}^P$, and that inverse image is therefore open and dense in $\tilde{\cn}^P$.  We write $\co^{pr}_L$ for either the orbit $L \cdot \nilp_d$ or $L \cdot \overline{\nilp_d}$. The fiber over $eP$ of the projection
$\tilde{\cn}^P \to G/P$ is identified with $\co^{pr}_L + \fu_P$.  The intersection of $G \cdot \nilp$ with this
fiber is open and dense in the fiber, and is identified with $P \cdot \nilp$, completing the proof.
\end{Rem}

Lemma~\ref{lem.orderd} implies that the character $\chi$ of $Z$ is the pullback of
a character of $C_d$, which we again denote
by $\chi$ (cf.~Remark \ref{rem.character}).
By Lemma~\ref{lem.stabilizer}, the component
group $L^{\nilp}/L^{\nilp}_0$ of $L^{\nilp}$ is equal to
$C_d$.  Therefore, $\chi$  induces
a local system $\cl^L_{\chi}$ on the orbit $\co^{pr}_L$.  
The Lusztig sheaf $\mathbb A_\chi$ is defined by
$$
{\mathbb A}_\chi =  {\mu_P}_*\pi_P^*\Ind_P^G IC(\cn_{L},\cl^L_{\chi})[d_P],
$$
where $d_P = \dim\fu_P$.
Recall the map $\psi: \widetilde{\cm} \to \cn$, which factors as
$$
\widetilde{\cm} \xrightarrow{\;\;\tilde{\map}\;\;} \widetilde{\cn} \xrightarrow{\;\;\rho_P\;\;}
\widetilde{\cn}^P \xrightarrow{\;\;\mu_P\;\;} \cn.
$$

Our main theorem is the following.

\begin{Thm} \label{thm.genspringerA}
Let $\psi: \tilde{\cm} \to \cn$ be the extended Springer resolution.  Then
$$
\psi_* \shq_{\tilde{\cm}}[\dim\cn] = \bigoplus_{\chi \in \widehat{Z}} \A_{\chi}.
$$
\end{Thm}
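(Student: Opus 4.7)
The plan is to prove Theorem~\ref{thm.genspringerA} by assembling three pushforward computations along the factorization $\psi = \mu_P \circ \rho_P \circ \tilde\eta$, using for each character $\chi\in\widehat{Z}$ of order $d$ the parabolic $P = P_d$ tailored to $\chi$. (The composite $\mu_P\circ\rho_P$ equals $\mu$ and so is independent of $P$, which is what allows this term-by-term choice.) Two of the three pushforward steps are already in hand: Proposition~\ref{prop.pushforwardcn} gives
\[
\tilde\eta_* \shq_{\tilde\cm}[\dim\cn] = \bigoplus_{\chi\in\widehat{Z}} IC(\tilde\cn,\cl_\chi),
\]
and for $\chi$ of order $d$, Theorem~\ref{thm.cnppushforward} with $P = P_d$ yields $(\rho_P)_* IC(\tilde\cn,\cl_\chi) = IC(\tilde\cn^P,\cl_\chi)$. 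What remains is to push this forward by $\mu_P$ and recognize the result as the Lusztig sheaf $\A_\chi$.

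The crux of the argument will be the identification
\[
IC(\tilde\cn^P,\cl_\chi) \;\cong\; \pi_P^* \Ind_P^G IC(\cn_L,\cl^L_\chi)[d_P].
\]
Both sides should be IC-complexes on all of $\tilde\cn^P$, so it suffices to match them on the dense open stratum $\tilde\cn^P_{reg} = G\times^P(\co_L^{pr}+\fu_P)$. To see that the right-hand side is indeed an IC-complex, I would use that the projection $q: \cn_L+\fu_P \to \cn_L$ is a trivial $\fu_P$-bundle, making $\pi_P$ smooth of relative dimension $d_P$, so $\pi_P^*[d_P]$ preserves IC-sheaves. Combined with the standard fact that $\Ind_P^G$ carries $IC(\cn_L,\cl^L_\chi)$ to the IC-extension on $G\times^P\cn_L$ of the induced local system on $G\times^P\co_L^{pr}$, this reduces the identification to a comparison of two local systems on $\tilde\cn^P_{reg}$.

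The main obstacle will be precisely this local-system check. On the one hand, the extension of $\cl_\chi$ from $\co^{pr}$ to $\tilde\cn^P_{reg}$ corresponds to $\chi$ viewed as a character of $G^\nilp/G^\nilp_0 \cong Z$. On the other hand, the pullback via $\pi_P$ of the induced local system from $G\times^P\co_L^{pr}$ corresponds to $\chi$ viewed as a character of $L^{\nilp_d}/L^{\nilp_d}_0 = C_d$ through Lemma~\ref{lem.stabilizer}. The compatibility of these two descriptions should follow from the surjection $Z \twoheadrightarrow C_d$ described before Lemma~\ref{lem.orderd}, together with Lemma~\ref{lem.orderd}(2), which identifies characters of $Z$ of order dividing $d$ with the pullbacks of characters of $C_d$. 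Once this compatibility is verified, the displayed identity follows, applying $(\mu_P)_*$ produces $\A_\chi$, and summing over $\chi\in\widehat{Z}$ completes the proof.
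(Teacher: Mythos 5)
Your proposal is correct and follows essentially the same route as the paper: reduce via Proposition~\ref{prop.pushforwardcn} and Theorem~\ref{thm.cnppushforward} to the identification $\pi_P^*\Ind_P^G IC(\cn_L,\cl^L_\chi)[d_P]\cong IC(\tilde{\cn}^P,\cl_\chi)$, establish that the left side is an IC-complex using smoothness of $\pi_P$ and compatibility of induction with IC-extensions, and then match the two local systems on a dense open set via the component-group surjection $Z\twoheadrightarrow C_d$ (the paper carries out this last check on the open $P$-orbit $P\cdot\nilp\subset\co^{pr}_L+\fu_P$, which is the same comparison you perform on the dense stratum). Your observation that $\mu_P\circ\rho_P=\mu$ justifies choosing $P=P_d$ separately for each $\chi$ is implicit in the paper's argument as well.
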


\begin{proof}
By Proposition \ref{prop.pushforwardcn} and Theorem \ref{thm.cnppushforward}, we have
$$
(\rho_P \circ \tilde{\eta})_*  \shq_{\tilde{\cm}}[\dim\cn] = \bigoplus_{\chi \in \widehat{Z}} IC(\tilde{\cn}, \cl_{\chi}).
$$
Comparing with the definition of the Lusztig sheaf, we see that it suffices to prove that
as $G$-equivariant perverse sheaves on $\tilde{\cn}^P$, we
have
\begin{equation} \label{e.induction}
\pi_P^* \Ind_P^G(IC(\cn_L, \cl_\chi^L))[d_P] =
IC({\tilde{\cn}^P}, \cl_{\chi}).
\end{equation}
From the map $q:\co_L^{pr} + \fu_P \to \co_L^{pr}$ we obtain a local system $q^* \cl^L_{\chi}$ on
$\co_L^{pr} + \fu_P$.  We have
\begin{eqnarray*}
 \pi_P^* \Ind_P^G IC(\cn_L, \cl_\chi^L) [d_P] & = & \Ind_P^G (q^* IC(\cn_L, \cl_\chi^L) )[d_P] \\
& = & \Ind_P^G IC(\cn_L + \fu_P, q^* \cl^L_{\chi})[d_P] \\
& = & IC(\tilde{\cn}^P, \Ind_P^G( q^* \cl^L_{\chi})).
\end{eqnarray*}
Here the first equality is because $\pi_P = \Ind_P^G(q)$ and $\Ind_P^G$ is a functor; the second
equality is because of the compatibility of IC complexes with smooth pullback (see \cite[Lemma 2.15]{Jantzen}); and the third equality
is because induction equivalence is compatible with
the construction of IC-complexes (see~\cite[\S 5.2]{BLEquiv}).    To complete the proof, it
suffices to show that there is some $G$-stable open set of $\tilde{\cn}^P$ on which the restrictions of the local systems
$\Ind_P^G(q^* \cl^L_{\chi})$ and $\cl_{\chi}$ are isomorphic. This is equivalent to
 showing that there is some $P$-stable open set of  $\co^{pr}_L + \fu_P$ on which the restrictions of $q^* \cl^L_{\chi}$ 
 and $\cl_{\chi}$ are isomorphic.  We will verify this for the open set $P \cdot \nilp$ of $\co^{pr}_L + \fu_P$.
 
The map $q$ induces (by restriction) a map of orbits
\begin{equation} \label{e.orbitmap}
q: P \cdot \nilp \to P \cdot \overline{\nilp};
\end{equation}
here the notation $\overline{\nilp}$ is as discussed at the beginning of the section.  
We need to check that 
\begin{equation} \label{e.pullbackchi}
q^* (\cl^L_{\chi}|_{P \cdot \overline{\nilp}}) \cong \cl_{\chi}|_{P \cdot \nilp}.
\end{equation}
The map \eqref{e.orbitmap} corresponds to the map of component groups
$$
P^{\nilp}/P^{\nilp}_0 = G^{\nilp}/G^{\nilp}_0 \cong Z \to P^{\overline{\nilp}}/P^{\overline{\nilp}}_0 = L^{\nilp}/L^{\nilp}_0
= Z(L)/Z(L)_0.
$$
By construction, under this map of component groups the character $\chi$ of $C_d=Z(L)/Z(L)_0$ pulls back
to the character of $Z$ which we have also denoted by $\chi$.  This implies \eqref{e.pullbackchi}, and the result follows.
\end{proof}

Theorem \ref{thm.genspringerA} implies that we can use the geometry of $\tilde{\cm}$ to study the generalized Springer correspondence.  For example, $Z$ must permute the irreducible components of the ``generalized Springer fibers'' $\psi^{-1}(\nu)$ for each $\nu\in \cn$. This is referred to as a {monodromy representation}.  From the discussion in \cite[\S 1.2]{Borho1983}, it follows that the multiplicity of $IC(\overline{\co}_\lambda, \cl_\chi)$ in $\psi_* \shq_{\tilde{\cm}}[\dim\cn]$ is exactly the multiplicity of the irreducible $Z$-representation with character $\chi$ in the monodromy representation on the irreducible components of $\psi^{-1}(\nu)$ for $\nu\in \co_\lambda$.  Our next example is a continuation of Example~\ref{ex: n-chi}, and computes this monodromy representation in a few cases.

\begin{example}\label{ex.mainthm} Let $n=6$ and $\chi\in Z$ be the character of order $d=2$. 
Consider the following standard tableau of shape $\lambda=[2^3]$.
\[
S = \ytableausetup{centertableaux}
\begin{ytableau}
1 & 2 \\
3 & 4 \\
5 & 6 \\
\end{ytableau}
\]
Example~\ref{ex: n-chi} shows that $C_S \subseteq \tilde{\cn}_\chi$ where $C_S$ is the open subset of the Springer fiber $\mu^{-1}(\nilp_2)$ of maximal dimension constructed as by Steinberg in~\cite[\S 2]{Steinberg}.  Let $g_0B\in \mu^{-1}(\nilp_2)$ denote a generic element.  Then equation~\eqref{eqn: projection formula} becomes
\[
p(g_0^{-1}\cdot \nilp_2) = E_{\alpha_1} + c_{2}E_{\alpha_2}+ E_{\alpha_3} + E_{\alpha_5},
\]
and $p(g_0^{-1}\cdot \nilp_2)$ corresponds to the coordinate values $(1,c_2,1,0,1)\in \toric_{ad}$ (we have $\toric_{ad} = \spec \C[x_1, \ldots, x_{n-1}] \cong \C^{n-1}$ as in Section~\ref{s.toric}). This confirms $C_S\subset \tilde{\cn}_\chi$ since $c_k\neq 0$ for all odd $k$.  

Recall from Section~\ref{s.toric} that we identify $\toric$ with a subscheme of 
$$\C^{2n-2}\simeq \spec \C[x_1, \ldots, x_{n-1};v_1,\ldots, v_{n-1}].$$  
Note that $x\in \toric$ satisfies $\pi(x) = p(g_0^{-1}\cdot \nilp_2)$ if and only if $x$ has coordinates
\[
(1,c_2,1,0,1; 0,0,v_3,0,0) \textup{ where } v_3^2 = 1
\]
since $v_3^2= x_1x_3x_5$ and $\alpha_4$ occurs in $\mu_k$ for all $k\neq 3$ (so $v_k(x)=0$ for all $k\neq 3$).  In other words, there are exactly two points in $\tilde{\cm}$ over each point of $C_S\subset \tilde{\cn}$.  It is straightforward to show that $\tilde{\map}^{-1}(\overline{C}_S)$ consists of precisely two irreducible components (which are also irreducible components of $\psi^{-1}(\nilp_2)$), each corresponding to the two possible values for $v_3$, namely $v_3\in \{\pm 1\}$.   These irreducible components are permuted by the $Z$-action (given by $v_3\mapsto \omega^3v_3=-v_3$); yielding a $Z$-representation with character $\iota+\chi$, where $\iota$ is the trivial character.  The $Z$-action on all other irreducible components of $\psi^{-1}(\nilp_2)$ is trivial.

Continuing in this way, we can compute $\tilde{\map}^{-1}(\overline{C}_S)$ for the other standard tableaux appearing in~\eqref{eqn: standard tab n=6}.  Our computations show that when $S$ is either of
\[
\begin{ytableau}
1 & 2 & 3 & 4 \\
5 & 6 \\
\end{ytableau}
\quad\quad
\begin{ytableau}
1 & 2 & 5 & 6 \\
3 & 4 \\
\end{ytableau}
\]
then  $\tilde{\map}^{-1}(\overline{C}_S)$ consists of precisely two irreducible components, permuted by the $Z$-action as above. We also show that $Z$ acts trivially on all other irreducible components of $\psi^{-1}(\nilp)$ for $\nilp \in \co_{[4,2]}$.  

When $S$ is the standard tableau with a single row, then the corresponding nilpotent element $\nilp$ of $\cn$ is regular so $\mu^{-1}(\nilp)$ is a single point.  The fiber in $\tilde{\cm}$ over $\mu^{-1}(\nilp)$ consists of exactly $6$ points and $Z$ acts by the regular representation on these components of $\psi^{-1}(\nilp)$.

These monodromy computations show that we expect the simple $G$-equivariant sheaves corresponding to the local system $\cl_\chi$ to appear with a total multiplicity of $4$, since $\chi$ only appears in the $Z$-action on $\tilde{\map}^{-1}(\overline{C}_S)$ for $S$ from~\eqref{eqn: standard tab n=6}.  In particular:
\begin{itemize}
\item $IC(\overline{\co}_{[2^3]}, \cl_{\chi})$ has  multiplicity $1$,
\item $IC(\overline{\co}_{[4,2]}, \cl_{\chi})$ has multiplicity $2$, and
\item  $IC(\overline{\co}_{[6]}, \cl_{\chi})$ has multiplicity $1$.
\end{itemize}
The table in Figure~\ref{fig1} gives the generalized Springer correspondence for $SL_6(\C)$. The reader may confirm the information of the table in Figure~\ref{fig1} by computing the generalized Springer correspondence for $SL_6(\C)$ as discussed in Section~\ref{sec.typeA}.  Column 6 of the table shows that the multiplicities computed above using the monodromy action match those of the simple perverse sheaves appearing in the generalized Springer correspondence since $\dim(V_{[1,1,1]}) = 1$, $\dim(V_{[2,1]}) = 2$, and $\dim(V_{[3]}) = 1$. 

\end{example}

\vspace*{.05in}

The reader may note that there is a combinatorial connection between the standard tableaux appearing in~\eqref{eqn: standard tab n=6} and the partitions of $3$ indexing irreducible representations of the relative Weyl group $S_3$ appearing in column 6 of Figure~\ref{fig1}.  This pattern generalizes.  In a forthcoming paper, the authors give an explicit description of the irreducible components of $\psi^{-1}(\nilp)$ as $\nilp\in \cn$ varies and compute the monodromy representation.  

The discussion in Section~\ref{sec.genspringer} tells us that, in the type $A$ case, only the simple perverse sheaves corresponding to $G_{ad}$-equivariant local systems (that is, the trivial local systems) appear in the Springer correspondence.   In other types, there are $G_{ad}$-equivariant local systems which do not appear in the Springer correspondence and we do not expect the results of Theorem~\ref{thm.genspringerA} to hold in full generality.  On the other hand, many of the arguments above do hold in the general setting, and it is likely $\psi_* \shq_{\tilde{\cm}}[\dim\cn]$ is a sum of Lusztig sheaves in this case also.  The authors plan to study the geometric constructions of this manuscript for arbitrary reductive algebraic groups in future work.

\begin{figure}
\resizebox{6in}{!}{\begin{tabular}{|c|c||c|c|c|c|c|}
\hline
& Local & $L_d=T$ & $L_d\cong S(GL_3\times GL_3)$ & $L_d\cong S(GL_3\times GL_3)$ &$ L_d\cong S(GL_2\times GL_2\times GL_2) $& $L_d=G$ (Cuspidal)\\
Orbit & System& $S_{n/d} = S_6$ & $S_{n/d}  = S_2$ & $S_{n/d}  = S_2$ & $S_{n/d}  = S_3$ & $S_{n/d}$ trivial \\
\hline\hline
$[1^6]$ & Triv & $\dim V=1$& & & & \\
\hline
$[2\ 1^4]$ & Triv & $\dim V=5$& & & & \\
\hline
$[2^2\ 1^2]$ & Triv & $\dim V=9$& & & & \\
\hline
$[2^3]$ & Triv & $\dim V=5$& & & & \\
 & Order 2 & & & &$\dim V=1;\ [1^3]$ & \\
\hline
$[3\ 1^3]$ & Triv & $\dim V=10$& & & & \\
\hline
$[3\ 2\ 1]$ & Triv & $\dim V=16$& & & & \\
\hline
$[3^2]$ & Triv & $\dim V=5$& & & & \\
 & Order 3 & &$\dim V=1;\ [1^2]$  & && \\
 & Order 3 & & & $\dim V=1;\ [1^2]$& & \\
\hline
$[4\ 1^2]$ & Triv & $\dim V=10$& & & & \\
\hline
$[4\ 2]$ & Triv & $\dim V=9$& & & & \\
 & Order 2 & & & &$\dim V=2;\ [2\ 1]$ & \\
\hline
$[5\ 1]$ & Triv & $\dim V=5$& & & & \\
\hline
$[6]$ & Triv & $\dim V = 1$& & & & \\
& Order 6 & & & & &$\dim V = 1$ \\
& Order 3 & &$\dim V = 1;\ [2]$  & & &\\
& Order 2 & & & & $\dim V = 1;\ [3]$ & \\
& Order 3 & & &$\dim V = 1;\ [2] $  & & \\
& Order 6 & & & & &$\dim V = 1$ \\
\hline
\end{tabular}}
\begin{caption}{ The generalized Springer correspondence for $SL_6(\C)$.  Local systems on each orbit are identified by the order of the corresponding central character.  Each simple perverse sheaf has multiplicity given by dimension of an irreducible $S_{n/d}$ representation (where $d$ denotes the order of the character).
 }
\label{fig1}
\end{caption}
\end{figure}

%\bibliographystyle{jabrefpna}	
%\bibliography{library}	

\ifx\undefined\bysame
\newcommand{\bysame}{\leavevmode\hbox to3em{\hrulefill}\,}
\fi

\end{document}